\DeclareMathAccent{\wtilde}{\mathord}{largesymbols}{"65}
\declaretheorem[name=Theorem,refname={theorem,theorems},Refname={Theorem,Theorems}]{theorem}
\declaretheorem[name=Definition,refname={definition,definitions},Refname={Definition,Definitions},sibling=theorem]{definition}
\declaretheorem[name=Proposition,refname={proposition,propositions},Refname={Proposition,Propositions},sibling=theorem]{proposition}
\declaretheorem[name=Corollary,refname={corollary,corollaries},Refname={Corollary,Corollaries},sibling=theorem]{corollary}
\declaretheorem[name=Lemma,refname={lemma,lemmas},Refname={Lemma,Lemmas},sibling=theorem]{lemma}
\newcommand{\coleq}{\coloneqq}
\newcommand{\csn}{\mathrm{csn}}
\newcommand{\ud}{\mathrm{d}}
\newcommand{\pd}{\partial}
\newcommand{\norm}[1]{\left\lVert#1\right\rVert}
\newcommand{\abso}[1]{\left|#1\right|}
\newcommand{\csub}{\subset \subset}
\newcommand{\bN}{\mathbb{N}}
\newcommand{\bR}{\mathbb{R}}
\newcommand{\bC}{\mathbb{C}}
\newcommand{\cD}{\mathcal{D}}
\newcommand{\cB}{\mathcal{B}}
\newcommand{\cN}{\mathcal{N}}
\newcommand{\cU}{\mathcal{U}}
\newcommand{\cF}{\mathcal{F}}
\newcommand{\cL}{\mathcal{L}}
\newcommand{\cE}{\mathcal{E}}
\newcommand{\cS}{\mathcal{S}}
\newcommand{\basic}{\cE}
\newcommand{\moderate}{\cE_M}
\newcommand{\negligible}{\mathcal{N}}
\newcommand{\quotient}{\mathcal{G}}
\newcommand{\bPhi}{{\boldsymbol\Phi}}
\newcommand{\bvarphi}{{\boldsymbol\varphi}}
\newcommand{\btheta}{{\boldsymbol\theta}}
\newcommand{\bpsi}{{\boldsymbol\psi}}
\newcommand{\e}{\varepsilon}
\newcommand{\SO}{\mathrm{SO}}
\newcommand{\D}{\mathrm{D}}
\newcommand{\SK}{\mathrm{SK}}
\DeclareMathOperator{\supp}{supp}
\DeclareMathOperator{\pr}{pr}
\DeclareMathOperator{\id}{id}
\newcommand{\cG}{\mathcal{G}}
\title{Full and special Colombeau algebras}
\date{November 16, 2016}
\author{M.~Grosser\footnote{Universität Wien, Oskar-Morgenstern-Platz 1, 1090 Vienna, Austria.\newline email: \href{mailto:michael.grosser@univie.ac.at}{michael.grosser@univie.ac.at}.}, E.~A.~Nigsch\footnote{Wolfgang-Pauli-Institut, Oskar-Morgenstern-Platz 1, 1090 Vienna, Austria.\newline email: \href{mailto:eduard.nigsch@univie.ac.at}{eduard.nigsch@univie.ac.at}.}}
\newcommand{\unterstrich}{\mathunderscore\kern-1pt\mathunderscore\,}
\begin{document}

\maketitle

\begin{abstract}
We introduce full diffeomorphism-invariant Colombeau algebras with added $\e$-dependence in the basic space. This unites the full and special settings of the theory into one single framework. Using locality conditions we find the appropriate definition of point values in full Colombeau algebras and show that special generalized points suffice to characterize elements of full Colombeau algebras. Moreover, we specify sufficient conditions for the sheaf property to hold and give a definition of the sharp topology in this framework.
\end{abstract}

\textbf{Keywords: Colombeau algebra, generalized function, full, special, sharp topology, point values, locality, sheaf property}

\textbf{MSC2010 classification: 46F30}

\section{Introduction}\label{sec_intro}

Colombeau algebras are algebras of nonlinear generalized functions introduced by J.~F.~Colombeau \cite{ColNew,ColElem} in order to solve the problem of multiplication of distributions and hence circumvent the impossibility result of L.~Schwartz \cite{Schwartz}.

Contrary to the theory of distributions, where there is a generally accepted ``canonical'' definition of the space $\cD'(\Omega)$ of distributions on an open subset $\Omega \subseteq \bR^n$ on which any further studies are based, the theory of Colombeau algebras is -- at least until now -- open-ended in the sense that there is not one single ``canonical'' Colombeau algebra. Rather, one finds a family of constructions centered around the same principles but often only loosely related to each other. We give a short overview of the main representatives of this family in \Autoref{sec_history}. While in principle it is beneficial that one can choose a construction which is adapted to a particular problem at hand, the drawback of this situation consists in the fact that one often cannot formally transfer results obtained for one Colombeau algebra to a second; at best, one can hope to emulate them in the second setting (see for example \cite{Steinbauer:98,geodesics} for such a pair).

In this article we will for the first time combine several formative elements of different types of Colombeau algebras into one single conceptual frame:
\begin{itemize}
 \item Representatives of generalized functions being \emph{nets of smooth functions} indexed by $\e \in I \coleq (0,1]$; this can be seen as an extension of the sequential approach to distributions (see \cite{zbMATH03421400,zbMATH03049411} and \cite[Section 1.2.1]{GKOS}). This is the key idea of special (sometimes called simplified) Colombeau algebras.
 \item Representatives of generalized functions being \emph{smooth mappings} defined on the space $\cD(\Omega)$ of test functions; this is the original approach of Colombeau \cite{ColNew} based on calculus on infinite-dimensional locally convex spaces as in \cite{0564.46031}, nowadays customarily replaced by the convenient calculus of \cite{KM}. This provides the core concept for full Colombeau algebras.
 \item Employing \emph{smoothing operators} and the corresponding approximation properties for the formulation of the quotient construction; this idea goes back to \cite{2010arXiv1003.3341D,1204.58021} and was further developed in \cite{papernew}.
 \item Introducing various \emph{locality properties} for elements of the basic space in order for the quotient algebra to become a sheaf. This aspect having been manageable in an easy and natural way for the early versions of Colombeau algebras, it has not been dealt with systematically so far. However, this issue requires closer attention once smoothing operators get involved.
\end{itemize}
This will lead us to a natural construction of a Colombeau algebra $\quotient$ which will contain as subalgebras (at least on the level of the basic spaces) each of the following:
\begin{enumerate}[label=(\roman*)]
 \item $\cG^s$ (the special algebra \cite{0646.76007}),
 \item $\cG^o$ (Colombeau's original algebra \cite{ColNew}),
 \item $\cG^d$ (the diffeomorphism-invariant algebra \cite{found}),
 \item $\cG^f$ (the algebra obtained via the functional analytic approach \cite{papernew}).
 \end{enumerate}

We will exemplify the usefulness of $\quotient$ from several points of view: by discussing the sheaf property (\Autoref{sec_sheaf}), point values (\Autoref{sec_pv}) and the sharp topology (\Autoref{sec_top}) on it. The fact that these concepts naturally specialize to the subalgebras listed above underscores the universality of $\quotient$.

\section{History}\label{sec_history}

The roots of algebras of nonlinear generalized functions trace back as far as to the constructions of H.~König \cite{zbMATH03108760}. After subsequent contributions by E.~E.~Rosinger and J.~Egorov, J.~F.~Colombeau presented his construction of an algebra $\cG^o$ of nonlinear generalized functions in \cite{ColNew}. Its basic space on an open subset $\Omega \subseteq \bR^n$ is given by
\[ \cE^o ( \Omega ) \coleq C^\infty ( \cD(\Omega)) \]
which obviously allows for a canonical embedding of distributions. We understand smoothness on a locally convex space $E$ in the sense of convenient calculus \cite{KM}. Moreover, $C^\infty(E)$ denotes the space of smooth complex-valued functions on $E$. In order to get rid of the inherent technical difficulties resulting from the dependence on calculus on infinite-dimensional locally convex spaces (which is necessary in order to talk of smooth functions on $\cD(\Omega)$), two alternatives to $\cG^o$ have been developed. 

First, by incorporating the linear structure of $\bR^n$ into the testing procedure it is possible to base the construction of a Colombeau algebra $\cG^e(\Omega)$ (the ``elementary'' full Colombeau algebra \cite{ColElem}) on the basic space
\[ \cE^e ( \Omega ) \coleq \{ R \colon U ( \Omega ) \to \bC \ |\ R(\varphi, .) \textrm{ is smooth for all }\varphi \in \pr_1(U(\Omega))\} \]
where $U ( \Omega )$ is defined as $\{ (\varphi, x) \in \cD(\bR^n) \times \Omega\ |\ \supp \varphi + x \subseteq \Omega \}$ and $\pr_1$ denotes projection on the first component.

Second, by basing the whole construction on the sequential approach to distributions one can view distributions and, consequently, generalized functions as nets of smooth functions on $\Omega$ and define their multiplication componentwise (w.r.t. $\e$). The usual Colombeau-type quotient construction then yields an algebra $\cG^s$ which is a suitable quotient of the basic space
\[ \cE^s (\Omega) \coleq C^\infty(\Omega)^I. \]

The diffeomorphism-invariant full algebra $\cG^d$ 
was eventually obtained in \cite{found}. Its basic space
\[ \cE^d(\Omega) \coleq C^\infty(\cD(\Omega) \times \Omega) \cong C^\infty(\cD(\Omega), C^\infty(\Omega)) \]
already appears in the groundbreaking work of J.~Jel\'inek \cite{Jelinek}; the isomorphism here is given by the exponential law \cite[3.12, p.~30]{KM}. While the form of $\cE^d(\Omega)$ emerges naturally and quickly, the question of the right testing procedure giving a diffeomorphism-invariant algebra exhibits considerable technical difficulties. The solution of this problem was achieved in several steps, each broadening the understanding of the whole construction. Note that $\cG^d$ gives rise to a corresponding Colombeau algebra $\hat\cG$ on manifolds \cite{global}.

One of the key lessons to be learned from the development of diffeomorphism invariant Colombeau algebras and pointed out already in \cite{Jelinek} came as the insight that smooth dependence of $R$ on its parameter $\varphi$ is an indispensable ingredient for the theory. Thus, in a sense, Colombeau's maneuver of eliminating infinite-dimensional calculus has to be dispensed with.

Each of the Colombeau algebras considered above relies on its own smoothing procedure for the embedding of distributions. In view of this not too satisfactory state of affairs, a need for unification was felt at many occasions. Surprisingly enough, the decisive idea in this respect originated from the development of a geometric theory of generalized functions allowing, in particular, for a covariant derivative. Rather than considering a fixed ``regularization procedure'' as in each of the algebras above, the new approach taken in \cite{bigone} is based on introducing the entire space $\cL(\cD'(\Omega), C^\infty(\Omega))$ of ``smoothing operators'' as a new parameter domain into the definition of the basic space. By $\cL ( E, F)$, we denote the space of linear continuous mappings from a locally convex space $E$ to another locally convex space $F$, endowed with the topology of bounded convergence.

For $\cG^f$, which incorporates this idea in the most simple setting (the local and scalar one), the general concept that embedding distributions amounts to regularizing them has been turned into a definition: the basic space
\[ \cE^f (\Omega ) \coleq C^\infty ( \cL ( \cD'(\Omega), C^\infty(\Omega)), C^\infty(\Omega)) \]
embodies all possible ways to regularize distributions (in a linear, continuous way). By means of the kernel theorem, the canonical isomorphism
\[ \cL(\cD'(\Omega), C^\infty(\Omega)) \cong C^\infty( \Omega, \cD(\Omega)) \]
\cite[Th\'eor\`eme 3, p.~127]{FDVV} provides the link to the previously used formalisms, as will become clear from the following sections.
Moreover, this point of view enables one to obtain diffeomorphism invariance very easily; in fact, the construction as a whole even generalizes to an algebra of nonlinear generalized tensor fields on manifolds having very desirable properties without having to hassle with the technicalities of \cite{found}.

A policy adopted at one certain point of the above development is that of \emph{separating the basic definition from testing} \cite[Section 2.10]{GKOS}. Loosely speaking, testing consists of taking as ``test object'' an $\e$-dependent path $\phi(\e)$ in the domain of a representative $R$ and analyzing the asymptotic behavior of $R(\phi(\e))$ as $\e \to 0$. In all of these constructions the appropriate choice of test objects has been the major issue. Note that in all instances except for the special algebra, neither $R$ nor the elements of its domain depend on $\e$, which is what was meant by separating the basic definition from testing. Back then, this was a necessary policy to adopt in order to succeed in the next step towards a construction of a diffeomorphism invariant algebra. Presently, it seems favorable to relax this policy and examine its implications in hindsight. Consequently, in the next section we introduce a basic space which includes $\e$-dependence both in its domain and in its codomain.

\section{Basic Spaces}\label{sec_basic}

Our construction of a Colombeau algebra on an open subset $\Omega \subseteq \bR^n$ will be based on the basic space
\begin{equation}
\begin{aligned}\label{Edef}
\basic (\Omega) \coleq C^\infty & ( \cL(\cD'(\Omega), C^\infty(\Omega))^I , C^\infty(\Omega)^I ) \\
\cong C^\infty &( C^\infty(\Omega, \cD(\Omega))^I, C^\infty(\Omega)^I).
\end{aligned}
\end{equation}
Obviously, $\basic(\Omega)$ originates from $\cE^f(\Omega)$ by (re-)introducing $\e$-dependence in both its domain and its codomain. 
As in \cite{papernew} we call
\[ \SO(\Omega) \coleq \cL(\cD'(\Omega), C^\infty(\Omega)) \]
the space of \emph{smoothing operators} and
\[ \SK(\Omega) \coleq C^\infty(\Omega, \cD(\Omega)) \]
the space of \emph{smoothing kernels} on $\Omega$. These are isomorphic as locally convex spaces by L.~Schwartz' kernel theorem, which conveniently allows one to switch between the operational viewpoint using smoothing operators on the one hand and the analytic viewpoint using smoothing kernels on the other hand. The correspondence between $\Phi \in \SO(\Omega)$ and $\vec\varphi \in \SK(\Omega)$ is given by
\begin{alignat*}{2}
 \Phi(u)(x) &= \langle u, \vec\varphi(x) \rangle \qquad && (u \in \cD'(\Omega), x \in \Omega) \\
 \vec\varphi(x)(y) &= \Phi^{t}(\delta_x)(y) = \Phi(\delta_y)(x) \qquad && (x,y \in \Omega)
\end{alignat*}
where $\Phi^t$ is the transpose of $\Phi$.

\subsection{Locality in $\cE^f$}\label{subloc}

If the full basic space $\cE^f(\Omega)$ introduced in \cite{papernew} 
is used for the quotient construction the resulting algebra will fail to have 
the sheaf property. In some sense, $\cE^f(\Omega)$ is too large since for 
certain of its elements $R$, the value $R(\vec\varphi)(x)$ depends on values 
$\vec\varphi(y)$ with $y$ far away from $x$, a feature that may be viewed as 
$R$ not respecting loci on $\Omega$.

As a remedy, so-called locality conditions were 
introduced in \cite{papernew}. Keeping only those elements of the basic space 
$\cE^f(\Omega)$ displaying favorable locality properties, the 
resulting quotient algebra in fact becomes a sheaf.

Locality conditions have the additional merit of allowing to recover 
the previously used basic spaces $\cE^o(\Omega)$ and $\cE^d(\Omega)$ as 
subspaces of $\cE^f(\Omega)$.

In the following, we will recall 
these properties for the 
case of $\cE^f(\Omega)$
and extend the study to 
$\basic(\Omega)$ in the next subsection.

As to germs of sheaves, we will employ the following notation:
given a sheaf $\cF$ of vector spaces on a 
topological space $\Omega$ and an open subset $U \subseteq \Omega$, $\tau_x 
\colon \cF(U) \to \cF_x$ denotes the map assigning to a section on $U$ its germ 
at $x$, where $\cF_x$ denotes the stalk of $\cF$ at $x$.

Note that $C^\infty(\unterstrich, \cD(\Omega))$ is a sheaf of vector
spaces on $\Omega$, hence for any $\vec\varphi \in C^\infty(\Omega, \cD(\Omega))$ the germ $\tau_x\vec\varphi$ at $x \in \Omega$ is an element of the direct limit
$\varinjlim_{U \in \cU_x} C^\infty(U, \cD(\Omega))$,
where $\cU_x$ is the filter of open neighborhoods of $x$.

Items (ii) and (iii) of the following Lemma provide reformulations of the condition of locality 
in \cite[Def.~3, p.~419]{papernew}. Item (iii), in particular, paves the way for generalizing it to the framework of the present 
article.

\begin{lemma}
 Let $R \in C^\infty ( \SK(\Omega), C^\infty(\Omega))$. Then the following are equivalent:
\begin{enumerate}[label=(\roman*)]
 \item \label{lem_equiv_germloc.1} $\forall U \subseteq \Omega$ open $\forall \vec\varphi, \vec\psi \in \SK(\Omega)$:
\begin{align*}
  \vec\varphi|_U = \vec\psi|_U &\Longrightarrow R(\vec\varphi)|_U = R(\vec\psi)|_U ; \\
\intertext{ \item $\forall x \in \Omega$ $\forall \vec\varphi, \vec\psi \in \SK(\Omega)$:}
  \tau_x\vec\varphi = \tau_x\vec\psi &\Longrightarrow \tau_x(R(\vec\varphi)) = \tau_x(R(\vec\psi)) ; \\
 \intertext{ \item\label{lem_equiv_germloc.3} $\forall x \in \Omega$ $\forall \vec\varphi, \vec\psi \in \SK(\Omega)$:}
\tau_x\vec\varphi = \tau_x\vec\psi &\Longrightarrow R(\vec\varphi)(x) = R(\vec\psi)(x).
\end{align*}
\end{enumerate}
\end{lemma}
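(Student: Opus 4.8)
The plan is to establish the cyclic chain (i) $\Rightarrow$ (ii) $\Rightarrow$ (iii) $\Rightarrow$ (i), exploiting throughout the single structural fact that, in any sheaf, two sections have the same germ at a point $x$ precisely when they agree on some open neighborhood of $x$. Since both $\SK(\Omega) = C^\infty(\unterstrich, \cD(\Omega))$ and $C^\infty(\unterstrich)$ are sheaves on $\Omega$, this characterization applies to the inputs $\vec\varphi, \vec\psi$ and to the outputs $R(\vec\varphi), R(\vec\psi)$ alike; I note in passing that the smoothness of $R$ plays no role beyond guaranteeing that $R(\vec\varphi)$ is a genuine section of $C^\infty(\unterstrich)$ and hence possesses germs.

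For (i) $\Rightarrow$ (ii) I would fix $x \in \Omega$ and $\vec\varphi, \vec\psi$ with $\tau_x\vec\varphi = \tau_x\vec\psi$; by the germ characterization there is an open $U \ni x$ with $\vec\varphi|_U = \vec\psi|_U$, so (i) yields $R(\vec\varphi)|_U = R(\vec\psi)|_U$, whence the output germs at $x$ coincide. The step (ii) $\Rightarrow$ (iii) is then immediate, since evaluation at $x$ factors through $\tau_x$: equality of the output germs at $x$ forces equality of their values at $x$.

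The only implication demanding a genuine (if short) argument is (iii) $\Rightarrow$ (i), where a pointwise statement must be upgraded to one about restrictions. Given an open $U$ and $\vec\varphi, \vec\psi$ with $\vec\varphi|_U = \vec\psi|_U$, I would fix an arbitrary $x \in U$ and observe that agreement on all of the neighborhood $U$ certainly implies $\tau_x\vec\varphi = \tau_x\vec\psi$; applying (iii) gives $R(\vec\varphi)(x) = R(\vec\psi)(x)$, and since $x \in U$ was arbitrary this is exactly $R(\vec\varphi)|_U = R(\vec\psi)|_U$. I do not anticipate a real obstacle here: the lemma merely records the equivalence of the three standard ways of phrasing locality --- restriction-wise, germ-wise, and value-wise --- and each transition reduces to unwinding the definition of a germ in a sheaf.
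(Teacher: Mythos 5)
Your proof is correct and is precisely the routine unwinding of the definition of germs that the paper has in mind when it declares the proof ``immediate'' and omits it. The cyclic chain (i) $\Rightarrow$ (ii) $\Rightarrow$ (iii) $\Rightarrow$ (i) is the natural decomposition, and each step is handled correctly.
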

The proof is immediate. Defining locality by condition (iii) 
(rather than by (i) as in \cite{papernew}) not only gives a 
better match with the following scheme, but also is 
the appropriate form to use for \Autoref{thm_peetre} below.

\begin{definition}\label{def_flocal}A mapping $R \in C^\infty ( \SK(\Omega), C^\infty(\Omega))$ is called
 \begin{itemize}
  \item \emph{local} if $\forall x \in \Omega$ $\forall \vec\varphi, \vec\psi \in \SK(\Omega)$:
\begin{alignat*}{2}
& \tau_x\vec\varphi = \tau_x\vec\psi && \Longrightarrow R(\vec\varphi)(x) = R(\vec\psi)(x), \\
\intertext{ \item \emph{point-local} if $\forall x \in \Omega$ $\forall \vec\varphi, \vec\psi \in \SK(\Omega)$: }
& \vec\varphi(x) = \vec\psi(x) & & \Longrightarrow R(\vec\varphi)(x) = R(\vec\psi)(x), \\
\intertext{ \item \emph{point-independent} if $\forall x,y \in \Omega$ $\forall \vec\varphi, \vec\psi \in \SK(\Omega)$:}
& \vec\varphi(x) = \vec\psi(y) & & \Longrightarrow R(\vec\varphi)(x) = R(\vec\psi)(y).
\end{alignat*}
 \end{itemize}
\end{definition}
We have the strict implications \emph{point-independent} $\Rightarrow$ 
\emph{point-local} $\Rightarrow$ \emph{local}. Choosing $\{ R \in 
\cE^f(\Omega)\ |\ R \textrm{ is local}\}$ as basic space guarantees that the 
resulting quotient algebra becomes a sheaf.

Moreover, one readily verifies that the following isomorphisms hold (see also 
\Autoref{basiciso} below):
\begin{alignat*}{3}
&\{ R \in \cE^f(\Omega)\ |\ R \textrm{ is point-local} \} && \cong C^\infty ( \cD(\Omega), C^\infty(\Omega )) && = \cE^d(\Omega),\\
&\{ R \in \cE^f(\Omega)\ |\ R \textrm{ is point-independent} \} && \cong C^\infty ( \cD(\Omega )) && = \cE^o(\Omega). \\
\intertext{As to the prominent space}
&\{ R \in \cE^f(\Omega)\ |\ R \textrm{ is local}\},
\end{alignat*}
however, there is no nice explicit representation of the above kind. Nevertheless, 
Peetre-like results can be obtained even in this nonlinear case as follows:
\begin{theorem}\label{thm_peetre} A mapping $R \in \cE^f(\Omega)$ is local in the sense of \Autoref{def_flocal} if and only if for every $x \in \Omega$ and $\vec\varphi \in \SK(\Omega)$ the value $R(\vec\varphi)(x)$ depends on the $\infty$-jet $j^\infty(\vec\varphi)(x)$ only, i.e., if for any $x \in \Omega$ and $\vec\varphi, \vec\psi \in \SK(\Omega)$ the equality $(\pd^\alpha_x \vec\varphi)(x) = (\pd^\alpha_x \vec\psi)(x)$ for all $\alpha \in \bN_0^n$ implies $R(\vec\varphi)(x) = R(\vec\psi)(x)$.
\end{theorem}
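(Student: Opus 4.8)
The plan is to prove the two implications separately, the forward direction being immediate and the reverse direction (locality $\Rightarrow$ jet-dependence) carrying all the content. First, suppose $R(\vec\varphi)(x)$ depends only on $j^\infty(\vec\varphi)(x)$. If $\tau_x\vec\varphi = \tau_x\vec\psi$, then $\vec\varphi$ and $\vec\psi$ agree on a neighborhood of $x$, so in particular $(\pd^\alpha_x\vec\varphi)(x) = (\pd^\alpha_x\vec\psi)(x)$ for all $\alpha \in \bN_0^n$; hence $j^\infty(\vec\varphi)(x) = j^\infty(\vec\psi)(x)$ and therefore $R(\vec\varphi)(x) = R(\vec\psi)(x)$, so $R$ is local in the sense of \Autoref{def_flocal}. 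For the converse I would assume $R$ local, fix $x$, $\vec\varphi$, $\vec\psi$ with $(\pd^\alpha_x\vec\varphi)(x) = (\pd^\alpha_x\vec\psi)(x)$ for all $\alpha$, and set $\vec\eta \coleq \vec\psi - \vec\varphi \in \SK(\Omega)$. Then all base-point derivatives of $\vec\eta$ vanish at $x$, i.e.\ $\vec\eta$ is infinitely flat at $x$, and it suffices to show $R(\vec\varphi + \vec\eta)(x) = R(\vec\varphi)(x)$.

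The key device is to cut the flat perturbation away from $x$ so that locality applies. For each $s \in (0,1]$ I would choose a cutoff $\chi_s \in C^\infty(\Omega)$ with $\chi_s = 0$ on the ball $B(x,s)$, with $\chi_s = 1$ outside $B(x,2s)$, and with $\abso{\pd^\beta \chi_s} \le C_\beta\, s^{-\abso\beta}$. Setting $\vec\eta_s(y) \coleq \chi_s(y)\,\vec\eta(y)$ produces an element of $\SK(\Omega)$ which vanishes on $B(x,s)$, so $\vec\varphi + \vec\eta_s$ and $\vec\varphi$ have the same germ at $x$. Locality of $R$ then gives $R(\vec\varphi + \vec\eta_s)(x) = R(\vec\varphi)(x)$ for every $s \in (0,1]$.

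It remains to let $s \to 0$, and here the flatness of $\vec\eta$ is decisive. By Taylor's theorem, for every continuous seminorm on $\cD(\Omega)$ and every $N$ the relevant derivatives of $\vec\eta$ are bounded by $C_N\abso{y-x}^N$ on $B(x,2s)$; differentiating $(1-\chi_s)\vec\eta$ and inserting $\abso{\pd^\beta \chi_s} \le C_\beta\, s^{-\abso\beta}$ then shows that $\vec\eta - \vec\eta_s = (1-\chi_s)\vec\eta$ tends to $0$ in $\SK(\Omega)$ faster than any power of $s$. Consequently I can extract a sequence $s_k \downarrow 0$ along which $\vec\eta_{s_k} \to \vec\eta$ Mackey, hence $\vec\varphi + \vec\eta_{s_k} \to \vec\varphi + \vec\eta$ in the $c^\infty$-topology of $\SK(\Omega)$. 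Since $R$ is smooth it is continuous for this topology, and composing with the continuous evaluation at $x$ on $C^\infty(\Omega)$ yields $R(\vec\varphi)(x) = \lim_k R(\vec\varphi + \vec\eta_{s_k})(x) = R(\vec\varphi + \vec\eta)(x)$, as required.

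The main obstacle is exactly this final limit: one must verify that the superpolynomial decay of the cut-off error is strong enough for the convenient-calculus smoothness of $R$ to be applicable, i.e.\ genuine Mackey convergence (equivalently, that the $\vec\eta_{s_k}$ lie on a convergent, or smooth, curve through $\vec\varphi + \vec\eta$), taking into account that $\SK(\Omega) = C^\infty(\Omega,\cD(\Omega))$ carries a $\cD(\Omega)$-valued bornology rather than a plain Fréchet topology, so that the seminorm estimates must be controlled jointly in the base variable and in $\cD(\Omega)$. Establishing this bornological estimate, and with it the continuity of $R$ along the chosen sequence, is the technical crux; everything else reduces to the elementary germ argument above.
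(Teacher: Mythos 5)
Your argument is essentially correct, but note that the paper itself contains no proof of this theorem to compare against: it simply defers to the external reference \cite{peetre}. Your cut-off construction is the standard route for Peetre-type statements, so let me just confirm that the step you flag as the technical crux does go through. The forward implication is as trivial as you say. For the converse, with $\vec\eta=\vec\psi-\vec\varphi$ flat at $x$ and $\vec\eta_s=\chi_s\vec\eta$, locality indeed gives $R(\vec\varphi+\vec\eta_s)(x)=R(\vec\varphi)(x)$ for all $s$, and the required convergence holds because $\vec\eta-\vec\eta_s=(1-\chi_s)\vec\eta$ is supported, in the base variable, in $B(x,2s)$: on a fixed compact neighbourhood $L$ of $x$ the map $\vec\eta$ and all its base derivatives take values in a bounded subset of a single Fr\'echet step $\cD_K(\Omega)$, so Taylor's theorem applied to the jointly smooth function $(y,z)\mapsto\vec\eta(y)(z)$ gives, for every continuous seminorm $q$ of $\cD(\Omega)$, every $\beta$ and every $N$, a uniform bound $q\bigl(\pd^\beta\vec\eta(y)\bigr)\le C\,\abso{y-x}^N$ on $L$; combined with $\abso{\pd^\gamma\chi_s}\le C_\gamma s^{-\abso{\gamma}}$ and the Leibniz rule this yields $\sup_{y\in L}q\bigl(\pd^\beta(\vec\eta-\vec\eta_s)(y)\bigr)=O(s^{N-\abso{\beta}})$, hence $O(s^M)$ for every $M$. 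Along $s_k=2^{-k}$ the sequence $\vec\eta_{s_k}$ therefore converges fast to $\vec\eta$ in the bornology of $\SK(\Omega)$ (the values staying in a fixed $\cD_K(\Omega)$ takes care of the support condition you worry about), so by the special curve lemma it lies on a smooth curve through $\vec\psi$; smoothness of $R$ then makes $t\mapsto R(c(t))$ smooth into $C^\infty(\Omega)$, and composing with the continuous evaluation $\ev_x$ gives $R(\vec\varphi)(x)=\lim_k R(\vec\varphi+\vec\eta_{s_k})(x)=R(\vec\psi)(x)$. The only points worth writing out in a final version are the uniformity of the Taylor constants in the $\cD(\Omega)$-variable and the fact that $c^\infty$-convergence in the Fr\'echet space $C^\infty(\Omega)$ coincides with topological convergence; neither presents any difficulty.
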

The preceding theorem as well as a stronger form of it which essentially gives locally finite order can be found in \cite{peetre}.

\subsection{Locality in $\cE$}\label{subsec_loce}

In the preceding subsection on $\cE^f(\Omega)$, it was sufficient to 
handle three locality conditions, altogether. However, to exploit the full 
conceptual potential of $\cE(\Omega)$ we will have to control a totality of 28 
locality conditions or, rather, locality types.

Therefore, it is indispensable on the one hand to set up a more 
comprehensive terminological apparatus. On the other hand, it is essential to 
develop a sufficiently clear intuitive picture of the nature of the locality 
conditions involved.

To begin with, we note that according to the exponential law  
elements of  $\cE(\Omega)$ can be regarded as complex-valued functions defined 
on triples  $(\bvarphi, x, \e) \in 
\SK(\Omega)^I \times \Omega \times I$ which are smooth in $(\bvarphi, x)$ for 
fixed $\e$. Consequently, we will use freely either of $R(\boldsymbol 
\varphi)_\e(x)$ and $R(\bvarphi,x,\e)$.

Triples $(\bvarphi, x, \e)$ may be viewed as data to be fed 
into $R\in\cE(\Omega)$, giving rise to $R(\bvarphi, x, \e)\in
C^\infty(\Omega)^I$. The key strategy of our locality approach consists in 
classifying elements $R$ of $\cE(\Omega)$ according to the degree up to which 
the full content of $(\bvarphi, x, \e)$ is actually used for determining the 
value $R(\bvarphi,x,\e)$.

For example, if $R(\bvarphi,x,\e)=S(\bvarphi_\e(x))$ 
for some $S\in C^\infty( \cD(\Omega) )$ then $R$ reacts to only a part of 
the content of $(\bvarphi,x,\e)$. We say that ``$R$ only 
depends on $\bvarphi_\e(x)$''; in fact, $\bvarphi_\e(x)$ does not carry the 
full information contained in $(\bvarphi,x,\e)$ since it is not possible to 
recover the latter from the former.

We illustrate and, at the same time, formalize this concept by 
considering the following toy model:

A function $f:\bR\times \bR\to \bR$ is said to depend 
only on $x+y$ if there exists $g:\bR\to\bR$ such that $f(x,y)=g(x+y)$. Defining 
$\ell:\bR\times \bR\to \bR$ by $\ell:(x,y)\mapsto x+y$, this property of $f$ 
amounts to the fact that it factors according to $f=g\circ \ell$ for a 
suitable $g$ or, in other words, that $f$ is the pullback of some $g$ under 
$\ell$. We express this by saying that $f$ ``is of locality type $\ell$''. More 
generally, we say that a function $f:X\to Y$ is of locality type $\ell$ with 
$\ell:X\to X_\ell$ if $f=g\circ \ell$ for a suitable $g:X_\ell \to Y$, for sets 
$X$, $X_\ell$, $Y$.

In particular, a locality type on $\SK(\Omega)^I \times \Omega \times 
I$ is but a map $\ell:\SK(\Omega)^I \times \Omega \times I\to X_\ell$. 
Intuitively, it is to be viewed as a compression procedure acting on the data 
$(\bvarphi,x,\e)$. The locality character of $R$ is reflected in the 
corresponding locality type $\ell$ through which it factors: the stronger the 
data compression effected by $\ell$, the less of the data is actually needed 
for determining the value of $R(\bvarphi,x,\e)$.

Note that the present 
concept of locality not only refers to the locus $x \in \Omega$, but equally to 
the ``loci'' $\bvarphi \in \SK(\Omega)^I$ and
$\e \in I$. For 
instance, if $R = \iota u$ is the embedded image of a distribution $u \in 
\cD'(\Omega)$ (with $\iota$ as in \eqref{embed_iota} below) then 
$R(\bvarphi)_\e(x) = \langle u,\bvarphi_\e(x)\rangle$ depends only on 
$\bvarphi_\e(x) \in \cD(\Omega)$ but not (directly) on $\e$, on $x$ or 
on $\bvarphi$. Similarly, if $R = \sigma f$ is the embedded image of a smooth function $f \in 
C^\infty(\Omega)$ (with $\sigma$ as in \eqref{embed_sigma} below) then 
$R(\bvarphi)_\e(x) = f(x)$ depends only on $x$, but not on $\e$ or on 
$\bvarphi$.

Some of the most important locality types involve germs of $\bvarphi$ at 
$x\in\Omega$. Thus we have to introduce one more piece of terminology along 
these lines.
Given $\bvarphi \in C^\infty(\Omega, \cD(\Omega))^I \cong C^\infty(\Omega, 
\cD(\Omega)^I)$ we can form its germ at $x \in \Omega$ either 
separately for each fixed $\e$ or 
uniformly in $\e$. The first case is given by $\tau_x \circ \bvarphi \in 
(C^\infty(\unterstrich, \cD(\Omega))_x)^I$, which is determined by giving for 
each $\e$ an open neighborhood $U_\e$ of $x$ and a smoothing kernel 
$\vec\varphi_\e \in C^\infty(U_\e, \cD(\Omega))$ such that $\tau_x(\bvarphi_\e) 
= \tau_x\vec\varphi_\e$. For the second case, $\tau_x \bvarphi \in 
C^\infty( \unterstrich, \cD(\Omega)^I)_x$ denotes the germ of $\bvarphi \in 
C^\infty(\Omega, \cD(\Omega)^I)$. It is also determined by
$( \tau_x \bvarphi)_\e = \tau_x\vec\varphi_\e$ yet this time there is 
one common neighborhood $U$ of 
$x$ such that each $\vec\varphi_\e$ is an element of
$C^\infty(U, \cD(\Omega))$.

Writing the stalks with respect to either of these versions as direct 
limits we have
\[ \tau_x \circ \bvarphi \in \Bigl( \varinjlim_{U_\e \in \cU_x} C^\infty(U_\e, 
\cD(\Omega))
\Bigr)^I, \qquad \tau_x\bvarphi \in \varinjlim_{U \in \cU_x} C^\infty(U, 
\cD(\Omega)^I). \]
Note that in terms of equivalence classes $\tau_x\bvarphi$ is contained in $\tau_x \circ \bvarphi$.
Germs of type $\tau_x \circ \bvarphi$ will, however, play no role in our theory 
because locality
types incorporating such germs do not appear to be 
invariant under derivatives (cf.~\Autoref{thm_deriv}). Moreover, we will refrain 
from investigating locality types involving jets here, although one could 
consider them and look for similar results as obtained in \cite{peetre}.

Now we are ready to give the formal definition of locality types on 
$\SK(\Omega)^I \times \Omega \times I$. As motivated above, the reduction 
scheme of the full data $(\bvarphi,x,\e)$ to the ``reduced'' data on which 
$R\in\cE(\Omega)$ actually depends will be encoded by suitable mappings 
$\ell:\SK(\Omega)^I \times \Omega \times I\to X_\ell$.

\begin{definition}\label{def_loc}
 Given any mapping $\ell = \ell(\bvarphi, x, \e)$ defined on $\SK(\Omega)^I 
\times \Omega \times I$
taking values in a specific space $X_\ell$, a 
mapping $R \in \cE(\Omega)$ is called \emph{$\ell$-local} or \emph{of locality 
type $\ell$} if for all $\bvarphi, \bpsi \in \SK(\Omega)^I$, $x,y \in \Omega$ 
and $\e,\eta \in I$ the implication
\[ \ell(\bvarphi, x, \e) = \ell ( \bpsi, y, \eta) \Longrightarrow 
R(\bvarphi)_\e(x) = 
R(\bpsi)_\eta(y) \]
holds; or, equivalently, if $R$ factors through $\ell$ according to
\[
\xymatrix{
\SK(\Omega)^I \times \Omega \times I \ar[rr]^-{R} \ar[dr]_-\ell & & \bC \\
& X_\ell \ar@{.>}[ur] &
}
\]
By $\basic^\ell(\Omega)$ or $\basic[\ell](\Omega)$ we denote the 
subset of $\basic(\Omega)$ consisting of $\ell$-local elements.
\end{definition}

Only a limited number of locality types will be relevant 
for us, all of them being given by certain natural maps as, e.g., 
projections or evaluation maps. More precisely, we assume from now on
that the term giving the value $\ell(\bvarphi, x, \e)$ is a triple whose components are taken from the corresponding 
one of the following lists (i), (ii), (iii):

\begin{enumerate}[label=(\roman*)]
 \item \label{comp_1} The $\bvarphi$-component is given by any of the 
terms $\bvarphi$, $\bvarphi_\e$, $\tau_x 
\bvarphi$, $\tau_x 
\bvarphi_\e$, 
$\bvarphi(x)$, $\bvarphi_\e(x)$ or by $\star$\,. 
 \item The $x$-component is given by $x$ or by $\star$\,. 
 \item The $\e$-component is given by $\e$ or by $\star$\,.
\end{enumerate}
In the preceding lists, $\star$ denotes any fixed object whose nature 
does not matter. Regarding list (i), note that $\bvarphi 
\in \SK(\Omega)^I$, $\bvarphi_\e \in \SK(\Omega)$, $\tau_x 
\bvarphi \in \bigsqcup_y C^\infty(\unterstrich, \cD(\Omega)^I)_y$, $\tau_x 
\bvarphi_\e \in \bigsqcup_y C^\infty(\unterstrich, \cD(\Omega))_y$, 
$\bvarphi(x) \in \cD(\Omega)^I$ and $\bvarphi_\e(x) \in \cD(\Omega)$.

By $\ell_\bvarphi$, $\ell_x$ and $\ell_\e$ we denote the respective 
components of $\ell(\bvarphi, x, \e)$.
For the sake of brevity, we omit $\star$ from locality types other than 
$(\star,\star,\star)$ whenever there is no danger of confusion. 
For example, we will write $\bvarphi(x)$ for $(\bvarphi(x),\star,\star)$. The 
triple $(\star,\star,\star)$, finally, will be abbreviated simply by 
$\star$.

Formally, this gives $7\cdot 2 \cdot 2 = 28$ possible locality types.
However,for $\ell_\bvarphi \in \{ \tau_x\bvarphi, \tau_x\bvarphi_\e \}$ the 
equation $\ell(\bvarphi, x, \e) = \ell ( \bpsi, y, \eta)$ always implies $x=y$, 
hence we do not have to consider locality types involving germs but with 
the $x$-component absent; this leaves us with 24 locality types which are 
depicted in \Autoref{fig2}. Therein, we have drawn an implication arrow from a 
locality type $\ell_1$ to a locality type $\ell_2$ if locality of type $\ell_1$ 
implies locality of type $\ell_2$. In this case, $\ell_1$ is said to be 
\emph{stronger} than $\ell_2$, which we will also denote by $\ell_1 \succeq 
\ell_2$. Moreover, arrows between boxes are to be understood as a shorthand for 
the collection of arrows $\ell_1 \Rightarrow \ell_2$ where $\ell_1$ is obtained 
from $\ell_2$ by deleting one  of its components. Note that all these 
implications are strict.

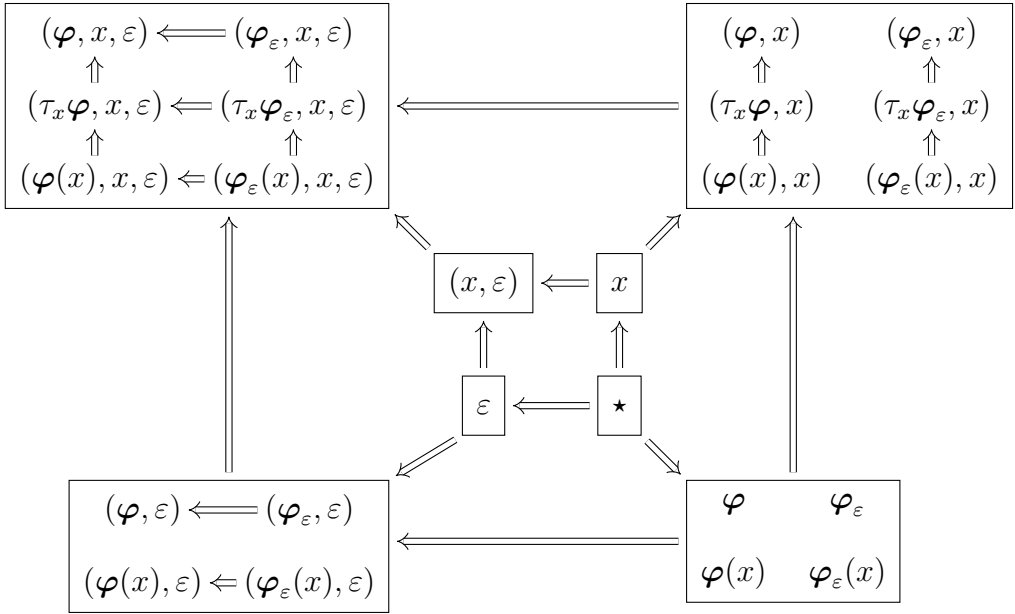
\begin{figure}[h!]
\begin{center}
\begin{tikzpicture}[
mstyle1/.style={row sep=2ex,column sep=2ex,draw,matrix of math nodes,inner sep=0.5ex},
ar2/.style={-implies,double equal sign distance,shorten >=0.1cm,shorten <=0.1cm},
ar/.style={-implies,double equal sign distance},
node distance=2em
]
\matrix[mstyle1](m5) {
(x,\e) \\
};
\matrix[mstyle1](m6) [right=of m5] {
x\vphantom( \\
};
\matrix[mstyle1](m7) [below=of m5] {
\e\vphantom( \\
};
\matrix[mstyle1](m8) at (m6 |- m7) {
\star\vphantom( \\
};
\draw[ar2] (m6.west) -- (m5.east);
\draw[ar2] (m7.north) -- (m5.south);
\draw[ar2] (m8.west) -- (m7.east);
\draw[ar2] (m8.north) -- (m6.south);

\matrix[mstyle1](mx1) [above left=of m5] {
(\bvarphi, x, \e) & ( \bvarphi_\e, x, \e) \\
    (\tau_x\bvarphi, x, \e) &  ( \tau_x \bvarphi_\e, x, \e) \\
(\bvarphi(x), x, \e) & (\bvarphi_\e(x), x, \e) \\
};

\draw[ar] (mx1-1-2) -- (mx1-1-1);
\draw[ar] (mx1-2-2) -- (mx1-2-1);
\draw[ar] (mx1-3-2) -- (mx1-3-1);
\draw[ar] (mx1-3-1) -- (mx1-2-1);
\draw[ar] (mx1-2-1) -- (mx1-1-1);
\draw[ar] (mx1-3-2) -- (mx1-2-2);
\draw[ar] (mx1-2-2) -- (mx1-1-2);

\draw[ar2] (m5.north west) -- (mx1.south east);

\matrix[mstyle1](mx4) [below right=of m8] {
    \bvarphi & \bvarphi_\e \\
\bvarphi(x) & \bvarphi_\e(x) \\
};

\draw[ar2] (m8.south east) -- (mx4.north west);

\matrix[mstyle1,anchor=south west](mx2) at (mx1.south-|mx4.west) {
    (\bvarphi, x) & ( \bvarphi_\e, x) \\
    (\tau_x\bvarphi, x) &  ( \tau_x \bvarphi_\e, x) \\
(\bvarphi(x), x) & (\bvarphi_\e(x), x) \\
};

\draw[ar] (mx2-3-1) -- (mx2-2-1);
\draw[ar] (mx2-2-1) -- (mx2-1-1);
\draw[ar] (mx2-3-2) -- (mx2-2-2);
\draw[ar] (mx2-2-2) -- (mx2-1-2);

\draw[ar2] (m6.north east) -- (mx2.south west);

\matrix[mstyle1,anchor=north east](mx3) at (mx1.east|-mx4.north) {
(\bvarphi, \e) & ( \bvarphi_\e, \e) \\
(\bvarphi(x), \e) & (\bvarphi_\e(x), \e) \\
};

\draw[ar] (mx3-1-2) -- (mx3-1-1);
\draw[ar] (mx3-2-2) -- (mx3-2-1);

\draw[ar2] (m7.south west) -- (mx3.north east);

\draw[ar2] (mx2.west) -- (mx1.east);
\draw[ar2] (mx3.north) -- (mx3.north|-mx1.south);
\draw[ar2] (mx4.west) -- (mx4.west-|mx3.east);
\draw[ar2] (mx4.north) -- (mx4.north|-mx2.south);
\end{tikzpicture}
\caption{Locality types}
\label{fig2}
\end{center}
\end{figure}

Clearly, $\basic[\ell](\Omega)$ is an algebra for every $\ell$ and even a $C^\infty(\Omega)$-module if $\ell_x = x$. The following proposition gives explicit representations for those spaces not involving germs of $\bvarphi$:
\begin{proposition}\label{basiciso}
Let $\ell$ be a locality type with $\ell_\bvarphi \in \{ \bvarphi, \bvarphi_\e, \bvarphi(x), \bvarphi_\e(x), \star \}$. Then there is an algebra isomorphism
\[ \basic[\ell](\Omega) \cong C^\infty( \cF_1(\Omega), \cF_2(\Omega)) \]
which is also a $C^\infty(\Omega)$-module isomorphism if $\ell_x = x$. Here, $\cF_1(\Omega)$ is given by
\begin{alignat*}{3}
 & \SK(\Omega)^I \textrm{ if }\ell_\bvarphi = \bvarphi, \qquad && \cD(\Omega)^I \textrm{ if }\ell_\bvarphi = \bvarphi(x), \qquad && \SK(\Omega) \textrm{ if }\ell_\bvarphi = \bvarphi_\e, \\
 & \cD(\Omega) \textrm{ if } \ell_\bvarphi = \bvarphi_\e(x),
 && \{0\} \textrm{ if }\ell_\bvarphi = \star &&
\end{alignat*}
and $\cF_2(\Omega)$ is given by
\begin{alignat*}{2}
 & C^\infty(\Omega)^I \textrm{ if }(\ell_x, \ell_\e) = (x,\e), \qquad
 && C^\infty(\Omega)\textrm{ if }(\ell_x, \ell_\e) = (x,\star), \\
 & \bC^I\textrm{ if }(\ell_x, \ell_\e) = (\star,\e), \qquad
 && \bC\textrm{ if }(\ell_x, \ell_\e) = (\star,\star).
\end{alignat*}
\end{proposition}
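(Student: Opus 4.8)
The plan is to show that the factorization built into \Autoref{def_loc} is realized by an explicit pair of mutually inverse maps, and to observe that the only genuine work lies in transporting smoothness across the ``forgetful'' map $\ell$. Since the domain $\cF_1(\Omega)$ depends only on the $\bvarphi$-component $\ell_\bvarphi$ while the codomain $\cF_2(\Omega)$ depends only on $(\ell_x,\ell_\e)$, I would treat these two reductions independently and then combine them; this organizes the $5\cdot 4 = 20$ cases covered by the statement into two short and uniform case distinctions.

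For the forward direction, start from an $\ell$-local $R\in\basic(\Omega)$. By \Autoref{def_loc} the value $R(\bvarphi)_\e(x)$ depends only on $\ell(\bvarphi,x,\e)$, so there is a set-theoretic factorization $R=g\circ\ell$. The key device for proving that the curried version $\tilde g$ of $g$ is smooth, i.e.\ lies in $C^\infty(\cF_1(\Omega),\cF_2(\Omega))$, is to exhibit $\tilde g$ as a composition of $R$ with genuinely smooth maps. Namely, for each admissible $\ell_\bvarphi$ I would write down a \emph{constant section} $S\colon \cF_1(\Omega)\to \SK(\Omega)^I$ given by the obvious constant-in-$x$ and/or constant-in-$\e$ embedding (for instance $\psi\mapsto (x\mapsto\psi)_{\e\in I}$ when $\ell_\bvarphi=\bvarphi_\e(x)$, the diagonal $\phi\mapsto(\phi)_{\e\in I}$ when $\ell_\bvarphi=\bvarphi_\e$, and the identity when $\ell_\bvarphi=\bvarphi$). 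Each such $S$ is linear and continuous, hence smooth, and satisfies $\ell_\bvarphi(S(a),x,\e)=a$ for all $x,\e$. Post-composing with the smooth projection $P\colon C^\infty(\Omega)^I\to\cF_2(\Omega)$ — the identity when $(\ell_x,\ell_\e)=(x,\e)$, the evaluation $\pr_{\e_0}$ at a fixed $\e_0$ when $\ell_\e=\star$, the fibre evaluation $\ev_{x_0}$ at a fixed $x_0$ when $\ell_x=\star$, and both otherwise — one checks that $\tilde g=P\circ R\circ S$. Here one uses that on the range of $S$ the compressed components are held constant, so by locality $R(S(a))_\e(x)$ does not vary over the compressed slot and the choice of $\e_0$ resp.\ $x_0$ in $P$ is immaterial. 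Thus $\tilde g\in C^\infty(\cF_1(\Omega),\cF_2(\Omega))$ as a composition of smooth maps.

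For the reverse direction, given $\tilde g\in C^\infty(\cF_1(\Omega),\cF_2(\Omega))$ I would set $R(\bvarphi)_\e(x)\coleq \tilde g(\ell_\bvarphi(\bvarphi,x,\e))$ read off at the surviving $x$ and/or $\e$. That $R$ is an element of $\basic(\Omega)$ follows from the exponential law together with the smoothness of the evaluation maps $(\bvarphi,x)\mapsto\bvarphi_\e(x)$ and $(\bvarphi,x)\mapsto\bvarphi_\e$, which is a consequence of cartesian closedness of the smooth setting, while $\ell$-locality of $R$ is immediate from the construction. A direct computation shows that $S$, $P$ and this reconstruction compose to the respective identities, so the two assignments are mutually inverse. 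Finally, since all algebra operations on $\basic[\ell](\Omega)$ and on $C^\infty(\cF_1(\Omega),\cF_2(\Omega))$ are pointwise, and $\cF_2(\Omega)$ is in each case a commutative algebra, the bijection is an algebra isomorphism; when $\ell_x=x$ the $C^\infty(\Omega)$-module structure given by multiplication with $f(x)$ is visible on both sides and is likewise preserved, yielding the module statement. I expect the forward smoothness claim to be the main obstacle, and it is precisely what the constant sections $S$ are designed to overcome: without them one recovers only a set map on the quotient $X_\ell$, which carries no a priori smooth structure through which to factor $R$.
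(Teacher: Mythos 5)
Your proposal is correct and follows essentially the same route as the paper's proof: your ``constant sections'' $S$ are exactly the paper's canonical injections $a\mapsto\tilde a$ into $\SK(\Omega)^I$, your projection $P$ together with the fixed choices of $x_0,\e_0$ reproduces the paper's right inverse $r$ of $\ell$ with $\widetilde R = R\circ r$, and the reverse direction $R=\widetilde R\circ\ell$ with smoothness via smoothness of $\ell$ is identical.
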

\begin{proof}
The codomain $X_\ell$ of $\ell$ is given by
\begin{alignat*}{3}
&\textrm{(i) } && \cF_1(\Omega) \times \Omega \times I \qquad && \textrm{if }(\ell_x, \ell_\e ) = (x, \e) \\
&\textrm{(ii) } && \cF_1 (\Omega) \times \Omega \qquad && \textrm{if }(\ell_x, \ell_\e ) = (x, \star) \\
&\textrm{(iii) } &&\cF_1(\Omega) \times I \qquad && \textrm{if }(\ell_x, \ell_\e) = (\star, \e) \\
&\textrm{(iv) } && \cF_1(\Omega) \qquad && \textrm{if }(\ell_x, \ell_\e) = (\star, \star).
\end{alignat*}
For each of the five instances of $\cF_1(\Omega)$ as above there is a 
canonical map $\cF_1(\Omega)\ni a\mapsto \tilde a\in\SK(\Omega)^I$.
The respective values of $\tilde a$ on $(\e,x)$ are given by
$a(\e,x)$ (i.e., $\tilde a = a$ in this case), $a(\e)$, $a(x)$, $a$ and $0$ 
(in the order as listed in the proposition).
We now define right inverses
$r \colon X_\ell \to \SK(\Omega)^I \times \Omega \times I$ of $\ell$ by
\begin{alignat*}{2}
&\textrm{(i) } & r(a, x, \e) &= (\tilde a, x, \e) \\
&\textrm{(ii) } & r(a, x) &= ( \tilde a, x, \e_0) \\
&\textrm{(iii) } & r(a, \e) &= (\tilde a, x_0, \e) \\
&\textrm{(iv) } & r(a) &= (\tilde a, x_0, \e_0) 
\end{alignat*}
where $x_0 \in \Omega$ and $\e_0 \in I$ are arbitrary but fixed. Given $R \in 
\basic[\ell](\Omega)$, we define the mapping $\widetilde R \colon X_\ell 
\to \bC$ by $\widetilde R \coleq R \circ r$. The function 
$\widetilde R$ is smooth in $(a,x)$ in cases (i), (ii) and smooth in 
$a$ in cases (iii), (iv) because the injection $a \mapsto \tilde a$ is 
continuous, hence it defines a mapping $\widetilde R \in C^\infty( 
\cF_1(\Omega), \cF_2(\Omega) )$. The assignment $R \mapsto \widetilde R$ clearly 
is an algebra homomorphism and in case $\ell_x = x$ it is also 
$C^\infty(\Omega)$-linear.

Conversely, if $\widetilde R \in C^\infty(\cF_1(\Omega), \cF_2(\Omega))$ is 
given, we define 
$R \colon \SK(\Omega)^I \times \Omega \times I \to \bC$ by $R \coleq 
\widetilde R \circ \ell$. Then $R$ is smooth in $(\bvarphi, x)$ because 
$\ell_\bvarphi$ (hence $\ell$) is smooth, yielding $R \in \basic[\ell](\Omega)$.

The assignment $\widetilde R \mapsto R$ is an algebra homomorphism and even $C^\infty(\Omega)$-linear if $\ell_x = x$, and is inverse to the above assignment $R \mapsto \widetilde R$. Hence, we have established the desired isomorphism.
\end{proof}

We list some familiar examples of basic spaces defined by locality conditions:
\begin{itemize}
 \item $\basic[(\bvarphi_\e, x)](\Omega) \cong C^\infty( \SK(\Omega), C^\infty(\Omega)) = \cE^f(\Omega)$.
 \item $\basic[(\bvarphi_\e(x), x)](\Omega) \cong C^\infty( \cD(\Omega), C^\infty(\Omega)) = \cE^d(\Omega)$.
 \item $\basic[\bvarphi_\e(x)](\Omega) \cong C^\infty( \cD(\Omega) ) = \cE^o(\Omega)$.
 \item $\basic[(x, \e)](\Omega) \cong C^\infty(\Omega)^I = \cE^s(\Omega)$.
\end{itemize}

For the last isomorphism, we used that $C^\infty(\{0\}, E) \cong E$ for any locally convex space $E$.
These examples show how the traditional notation for the basic spaces of 
various Colombeau algebras fits into our setting. In particular, the above basic 
spaces are obtained as $\basic^\ell(\Omega)$ if one sets $\ell$ to be $f = 
(\bvarphi_\e, x)$, $d = (\bvarphi_\e(x), x)$, $o = \bvarphi_\e(x)$ or $s = 
(x,\e)$, respectively.

In passing, we note that the isomorphisms envisaged in \Autoref{subloc} can be written, according to the (obvious slim-down of the) present terminology, as 
\begin{alignat*}{3}
&\{ R \in \cE^f(\Omega)\ |\ R \textrm{ is local}\} &&  \cong \cE^f [ \tau_x\vec\varphi, x](\Omega), \\
&\{ R \in \cE^f(\Omega)\ |\ R \textrm{ is point-local} \} && \cong \cE^f [ \vec\varphi(x), x](\Omega) && \cong \cE^d(\Omega), \\
&\{ R \in \cE^f(\Omega)\ |\ R \textrm{ is point-independent} \} && \cong  \cE^f[\vec\varphi(x)] (\Omega) && \cong \cE^o(\Omega).
\end{alignat*}

\section{Operations on the basic spaces}\label{sec_op}

In this section we will examine pullbacks, derivatives as well as the presheaf property for the basic spaces $\basic[\ell](\Omega)$ with $\ell$ as specified after \Autoref{def_loc}.

Suppose we are given a diffeomorphism $\mu \colon \Omega \to \Omega'$ between open subsets $\Omega, \Omega' \subseteq \bR^n$. Clearly the space $\cL(\cD'(\Omega), C^\infty(\Omega))$ is functorial, i.e., there is an induced pullback action along this diffeomorphism given by
$(\mu^* \Phi)(u) \coleq \mu^* ( \Phi ( \mu_* u))$ for $\Phi \in \cL(\cD'(\Omega'), C^\infty(\Omega'))$ and $u \in \cD'(\Omega)$. Because the pushforward of distributions is defined by $\langle \mu_* u, \varphi \rangle \coleq \langle u, (\varphi \circ \mu) \cdot \abso{\det \D \mu} \rangle$ (where $\D \mu$ is the Jacobian of $\mu$), the smoothing kernel $\mu^*\vec\varphi$ corresponding to $\mu^* \Phi$ is obtained as
\begin{align*}
 (\mu^* \vec\varphi)(x)(y) &\coleq (\mu^* \Phi)(\delta_y)(x) = \mu^* ( \Phi(\mu_* \delta_y))(x) \\
&= \Phi ( \mu_* \delta_y)(\mu(x)) = \langle \mu_* \delta_y, \vec\varphi(\mu(x)) \rangle \\
&= \langle \delta_y, ( \vec\varphi(\mu(x)) \circ \mu) \cdot \abso{\det \D \mu} \rangle = \vec\varphi ( \mu(x)) ( \mu(y)) \cdot \abso{ \det \D \mu(y)}
\end{align*}
where $\vec\varphi$ corresponds to $\Phi$. In other words, $(\mu^* \vec\varphi)(x) = \mu^* ( \vec \varphi ( \mu(x )) )$ as was to be expected. As is usual, we set $\mu_* \coleq (\mu^{-1})^*$.

Hence, we have the following definition.
\begin{definition}\label{def_pullback}Let $\mu \colon \Omega \to \Omega'$ be a diffeomorphism between open subsets $\Omega,\Omega' \subseteq \bR^n$ and $R \in \basic(\Omega')$. Then the pullback of $R$ along $\mu$, denoted by $\mu^*R$, is defined as the element of $\basic(\Omega)$ given by
 \[ (\mu^*R )(\bvarphi)_\e \coleq \mu^* ( R ( \mu_* \bvarphi)_\e ) \]
with $\mu_* \bvarphi \coleq (\mu_* \bvarphi_\e)_\e$.
\end{definition}

This pullback preserves locality:

\begin{theorem}
Let $\Omega,\Omega'$ be open subsets of $\bR^n$ and $\mu \colon \Omega \to \Omega'$ a diffeomorphism. Then the pullback $\mu^* \colon \basic(\Omega') \to \basic(\Omega)$ preserves all locality types of \Autoref{fig2}.
\end{theorem}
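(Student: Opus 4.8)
The plan is to express the pullback as precomposition with a single ``data transport'' map and then reduce the assertion to a purely combinatorial compatibility of this map with each of the $24$ locality types. Regarding elements of $\basic$ as $\bC$-valued functions of triples $(\bvarphi,x,\e)$ via the exponential law, \Autoref{def_pullback} unwinds (using that $\mu^*$ acts on $C^\infty(\Omega')$ by precomposition with $\mu$) to
\begin{equation*}
(\mu^*R)(\bvarphi)_\e(x) = R(\mu_*\bvarphi)_\e(\mu(x)),
\end{equation*}
that is, $\mu^*R = R \circ T$ for the transport map
\begin{equation*}
T\colon \SK(\Omega)^I \times \Omega \times I \to \SK(\Omega')^I \times \Omega' \times I, \qquad T(\bvarphi,x,\e) \coleq (\mu_*\bvarphi, \mu(x), \e).
\end{equation*}

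Fix a locality type and write $\ell$, $\ell'$ for its incarnation over $\Omega$ and over $\Omega'$ respectively (the same recipe in each of the $\bvarphi$-, $x$- and $\e$-components). By the implication form of \Autoref{def_loc} it suffices to establish the compatibility
\begin{equation*}
\ell(\bvarphi,x,\e) = \ell(\bpsi,y,\eta) \ \Longrightarrow\ \ell'(T(\bvarphi,x,\e)) = \ell'(T(\bpsi,y,\eta)).
\end{equation*}
Indeed, if $R$ is $\ell'$-local, then this compatibility together with $\ell'$-locality of $R$ gives $(\mu^*R)(\bvarphi)_\e(x) = R(\mu_*\bvarphi)_\e(\mu(x)) = R(\mu_*\bpsi)_\eta(\mu(y)) = (\mu^*R)(\bpsi)_\eta(y)$, which is precisely $\ell$-locality of $\mu^*R$; smoothness of $\mu^*R$, hence membership in $\basic(\Omega)$, is already provided by \Autoref{def_pullback}. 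As the hypothesis and conclusion both split into independent conditions on the three components, I check the compatibility componentwise.

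For the $\e$- and $x$-components this is immediate, since $T$ leaves $\e$ unchanged and the hypothesis $x=y$ yields $\mu(x)=\mu(y)$ because $\mu$ is a map (the $\star$-cases impose nothing). For the non-germ $\bvarphi$-components the key input is the evaluation identity $(\mu_*\vec\varphi)(\mu(x)) = (\mu^{-1})^*(\vec\varphi(x))$, obtained from the transformation formula for smoothing kernels computed before \Autoref{def_pullback} by replacing $\mu$ with $\mu^{-1}$. This shows that $(\mu_*\bvarphi)(\mu(x))$ depends only on $\bvarphi(x)$ and $(\mu_*\bvarphi)_\e(\mu(x))$ only on $\bvarphi_\e(x)$; combined with the fact that $\mu_*$ is a well-defined map on $\SK(\Omega)^I$ and on $\SK(\Omega)$, this disposes of the cases $\ell_\bvarphi \in \{\bvarphi, \bvarphi_\e, \bvarphi(x), \bvarphi_\e(x), \star\}$.

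The main obstacle is the germ case $\ell_\bvarphi \in \{\tau_x\bvarphi, \tau_x\bvarphi_\e\}$, where the hypothesis already forces $x=y$ and I must show that $\tau_{\mu(x)}(\mu_*\vec\varphi)$ depends only on $\tau_x\vec\varphi$ (and likewise for the $\e$-uniform germ). This is a locality property of the diffeomorphism action: as a function of $z$ one has $\mu_*\vec\varphi = (\mu^{-1})^* \circ \vec\varphi \circ \mu^{-1}$, so the restriction of $\mu_*\vec\varphi$ to a small neighborhood of $\mu(x)$ is determined, via precomposition with the diffeomorphism germ of $\mu^{-1}$ at $\mu(x)$ and postcomposition with the fixed operator $(\mu^{-1})^*$, by the restriction of $\vec\varphi$ to a small neighborhood of $x$. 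Passing to direct limits yields a well-defined map of stalks sending $\tau_x\vec\varphi$ to $\tau_{\mu(x)}(\mu_*\vec\varphi)$; the same argument applied uniformly in $\e$ (with one common neighborhood) handles $\tau_x\bvarphi$. Hence equal germs of $\bvarphi$ (resp.\ $\bvarphi_\e$) at $x$ force equal germs of $\mu_*\bvarphi$ (resp.\ $(\mu_*\bvarphi)_\e$) at $\mu(x)$, so the compatibility holds for every type in \Autoref{fig2} and the theorem follows.
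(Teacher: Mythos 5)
Your proposal is correct and follows essentially the same route as the paper: both reduce the claim to the compatibility $\ell(\bvarphi,x,\e)=\ell(\bpsi,y,\eta)\Rightarrow\ell(\mu_*\bvarphi,\mu(x),\e)=\ell(\mu_*\bpsi,\mu(y),\eta)$, dispose of the $x$- and $\e$-components immediately, and treat each $\bvarphi$-component case via the identities $(\mu_*\vec\varphi)(\mu(x))=(\mu^{-1})^*(\vec\varphi(x))$ and $\mu_*\vec\varphi=(\mu^{-1})^*\circ\vec\varphi\circ\mu^{-1}$, the latter giving that germs at $x$ map to germs at $\mu(x)$. Your explicit packaging via the transport map $T$ is only a cosmetic difference from the paper's case-by-case verification.
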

\begin{proof}
Let $R \in \basic(\Omega')$ be $\ell$-local for some locality type $\ell$ and suppose that $(\bvarphi, x, \e)$ and $(\bpsi, y, \eta)$ are given such that $\ell(\bvarphi, x, \e) = \ell ( \bpsi, y, \eta)$. We have to verify that $\ell(\mu_* \bvarphi, \mu(x), \e) = \ell ( \mu_* \bpsi, \mu(y), \eta)$ holds, which is clear for the $x$- and $\e$-components. For the $\bvarphi$-component we handle each case separately.

Case $\ell_\bvarphi = \bvarphi_\e$: $\ell_\bvarphi ( \mu_* \bvarphi, \mu(x), \e) = (\mu_* \bvarphi)_\e = \mu_* ( \bvarphi_\e) = \mu_* ( \bpsi_\eta) = (\mu_* \bpsi)_\eta = \ell_\bvarphi ( \mu_*\bpsi, \mu(y), \eta).$

Case $\ell_\bvarphi = \tau_x \bvarphi$: note that $x=y$ in this case. There exists $U \in \cU_x$ such that $\bvarphi|_U = \bpsi|_U$, which implies $(\mu_* \bvarphi)|_{\mu(U)} = ( ( \mu_* \bvarphi_{\e'})|_{\mu(U)} )_{\e'} = ( ( \mu_* \circ \bvarphi_{\e'} \circ \mu^{-1} )|_{\mu(U)})_{\e'} = ( ( \mu_* \circ \bpsi_{\e'} \circ \mu^{-1})|_{\mu(U)})_{\e'} = ( ( \mu_* \bpsi_{\e'})|_{\mu(U)})_{\e'} = (\mu_* \bpsi)|_{\mu(U)}$ and hence $\tau_{\mu(x)}(\mu_*\bvarphi) = \tau_{\mu(x)} ( \mu_*\bpsi)$.

Case $\ell_\bvarphi = \tau_x \bvarphi_\e$: again, $x$ equals $y$. There exists $U \in \cU_x$ such that $\bvarphi_\e|_U = \bpsi_\eta|_U$ and hence $(\mu_* \bvarphi_\e)|_{\mu(U)} = (\mu_* \bpsi_\eta)|_{\mu(U)}$, which means that $\tau_{\mu(x)} ( \mu_*\bvarphi)_\e = \tau_{\mu(x)} ( \mu_* \bpsi)_\eta$.

Case $\ell_\bvarphi = \bvarphi(x)$: for all $\e'$ we have $(\mu_* \bvarphi)_{\e'} ( \mu(x)) = (\mu_* \circ \bvarphi_{\e'} \circ \mu^{-1})_{\e'} ( \mu(x)) = (\mu_* \circ \bpsi_{\e'} \circ \mu^{-1})_{\e'} ( \mu(y)) = (\mu_* \bpsi)_{\e'} ( \mu(y))$, hence $(\mu_* \bvarphi)(\mu(x)) = (\mu_* \bpsi)(\mu(y))$.

Case $\ell_\bvarphi = \bvarphi_\e(x)$: $(\mu_* \bvarphi)_\e ( \mu(x)) = (\mu_* \circ \bvarphi_\e \circ \mu^{-1})(\mu(x)) = (\mu_* \circ \bpsi_\eta \circ \mu^{-1})(\mu(y)) = (\mu_* \bpsi)_\eta(y)$.

The case $\ell_\bvarphi = \star$ is trivial.
\end{proof}

For any two diffeomorphisms $\mu, \nu$ which can be composed, the pullback of \Autoref{def_pullback} satisfies
\[ (\mu \circ \nu)^* = \nu^* \circ \mu^*,\quad \id^* = \id \]
and $\basic^\ell$ is in fact a functor.

Turning to derivatives, we recall from \cite{papernew,bigone} that there are two equally natural derivatives we can define on $\cE(\Omega)$, which are given as follows. First, by differentiating the pullback of $R \in \cE(\Omega)$ along the flow of a (complete) vector field $X$ on $\Omega$, we obtain the following formula for the \emph{geometric} Lie derivative along $X$:
\[ (\widehat \D_X R)(\boldsymbol \varphi)_\e \coleq - (\ud R)(\boldsymbol \varphi)( \D_X^{\SK} \boldsymbol \varphi)_\e + \D_X ( R ( \boldsymbol \varphi)_\e ) \]
where $\ud R$ denotes the differential of $R$ \cite[3.18, p.~33]{KM} and furthermore we define $(\D^{\SK}_X \boldsymbol \varphi)_\e \coleq \D_X^{\SK} \bvarphi_\e$ with $\D_X^{\SK} \vec\varphi \coleq \D_X\vec\varphi + \D_X \circ \vec\varphi$ for $\vec \varphi \in \SK(\Omega)$. Here, $\D_X$ denotes either the usual directional derivative of a (vector-valued) function or of an $n$-form, accordingly.

The other possibility is to keep $\boldsymbol \varphi$ fixed and apply the derivative \emph{componentwise} (w.r.t.~$\e$) as in
\[
 (\widetilde \D_X R)(\boldsymbol \varphi)_\e \coleq \D_X ( R(\boldsymbol \varphi)_\e).
\]

While $\widehat \D_X$ is the derivative which commutes with the embedding of distributions, $\widetilde \D_X$ is the appropriate choice to obtain a meaningful notion of covariant derivative (cf.~\cite{papernew, bigone}). 

The following theorem expresses how locality types are preserved under these derivatives.
\begin{theorem}\label{thm_deriv}
 Let $\ell$ be a locality type as in \Autoref{fig2}, $X \in C^\infty(\Omega, \bR^n)$ a vector field on $\Omega$ and $R \in \basic[\ell](\Omega)$. Then
 \begin{enumerate}[label=(\roman*)]
  \item \label{thm_deriv.1} $\widehat \D_X R \in \basic[\ell](\Omega)$,
  \item $\widetilde \D_X R \in \basic[\ell'](\Omega)$, where $(\ell'_x, \ell'_\e) = (\ell_x, \ell_\e)$ and
  \[
   \ell'_\bvarphi = \left\{
\begin{aligned}
 & \tau_x \bvarphi \qquad & &\textrm{if }\ell_\bvarphi = \bvarphi(x),\\
 & \tau_x\bvarphi_\e \qquad & &\textrm{if }\ell_\bvarphi = \bvarphi_\e(x),\\
  &\ell_\bvarphi \qquad & & \textrm{otherwise.}
\end{aligned}\right.
  \] 
 \end{enumerate}
\end{theorem}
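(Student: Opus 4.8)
The plan is to prove (ii) first and then feed it into (i): the only difference between the two derivatives is the extra term carried by $\widehat\D_X$, and this term exactly compensates the germ-level data that $\widetilde\D_X$ produces. Throughout I write $\widetilde\D_X R(\bvarphi)_\e(x) = \sum_i X^i(x)\,\pd_{x_i}(R(\bvarphi)_\e)(x)$, so that this value depends on $R(\bvarphi)_\e$ only through its germ at $x$; this is the structural source of the upgrade in (ii).

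For (ii) I would argue directly from \Autoref{def_loc}. Suppose $\ell'(\bvarphi,x,\e) = \ell'(\bpsi,y,\eta)$. Whenever $\ell'_\bvarphi$ is a germ type (in particular for the two upgraded types $\tau_x\bvarphi$ and $\tau_x\bvarphi_\e$), equality of germs already forces $x = y$ and provides a neighbourhood $U$ of $x$ on which $\bvarphi$ and $\bpsi$ agree in the relevant sense. The key step is to check that $\ell(\bvarphi,z,\e) = \ell(\bpsi,z,\eta)$ for every $z \in U$: the $\ell_x$- and $\ell_\e$-components match by hypothesis, while the $\ell_\bvarphi$-component matches because passing from $\tau_x\bvarphi$ to the pointwise datum $\bvarphi(z)$ (resp.\ from $\tau_x\bvarphi_\e$ to $\bvarphi_\e(z)$) is legitimate precisely for $z \in U$. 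By $\ell$-locality of $R$ this gives $R(\bvarphi)_\e|_U = R(\bpsi)_\eta|_U$, hence $\pd_{x_i}(R(\bvarphi)_\e)(x) = \pd_{x_i}(R(\bpsi)_\eta)(y)$, and multiplying by $X^i(x) = X^i(y)$ finishes these cases. The types $\ell'_\bvarphi \in \{\bvarphi,\bvarphi_\e,\star\}$ follow from the same scheme, with the sole proviso that when $\ell_x = \star$ the function $R(\bvarphi)_\e$ is independent of $x$, so that both sides vanish.

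For (i) I would start from $\widehat\D_X R = -T_1 + \widetilde\D_X R$ with $T_1 \coleq (\ud R)(\bvarphi)(\D_X^{\SK}\bvarphi)$, and dispose of all non-pointwise types at one stroke by an \emph{affine-curve} device. By smoothness of $R$ one has $T_1(\bvarphi,x,\e) = \tfrac{\ud}{\ud t}\big|_0 R(\bvarphi + t\,\D_X^{\SK}\bvarphi)_\e(x)$, so it suffices to verify that $\ell(\bvarphi,x,\e) = \ell(\bpsi,y,\eta)$ implies $\ell(\bvarphi + t\,\D_X^{\SK}\bvarphi, x, \e) = \ell(\bpsi + t\,\D_X^{\SK}\bpsi, y, \eta)$ for all $t$; differentiating the resulting identity at $t = 0$ then yields $\ell$-locality of $T_1$. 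Since $\D_X^{\SK}$ is a local operator in the base point, it preserves the datum $\ell_\bvarphi$ for $\ell_\bvarphi \in \{\bvarphi,\bvarphi_\e,\tau_x\bvarphi,\tau_x\bvarphi_\e,\star\}$, so for these five types $T_1$ is $\ell$-local; as $\ell' = \ell$ here, (ii) makes $\widetilde\D_X R$ $\ell$-local as well, and the sum lies in $\basic[\ell](\Omega)$.

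The genuinely delicate cases, where I expect the main obstacle, are the pointwise types $\ell_\bvarphi \in \{\bvarphi(x),\bvarphi_\e(x)\}$: here $\D_X^{\SK}\bvarphi$ involves the outer derivative $\D_X\bvarphi_\e(x) = \sum_i X^i(x)\,\pd_{x_i}\bvarphi_\e(x)$, which is \emph{not} determined by $\bvarphi_\e(x)$, so $T_1$ is only $\ell'$-local and a cancellation is needed. I would make this precise via \Autoref{basiciso}, writing $R(\bvarphi)_\e(x) = S(\bvarphi_\e(x),x,\e)$ (resp.\ with $\bvarphi(x)$ in the first slot), where the explicit $x$- and $\e$-arguments are present according to $\ell_x$, $\ell_\e$, where $\pd_1 S$ denotes the derivative of $S$ in its first (test-function) slot and $\D_X^{\mathrm{expl}} S = \sum_i X^i\,\pd_{x_i} S$ its derivative in the explicit base-point slot alone. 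A chain-rule computation then gives $\widetilde\D_X R(\bvarphi)_\e(x) = (\pd_1 S)(\bvarphi_\e(x),x,\e)(\D_X\bvarphi_\e(x)) + (\D_X^{\mathrm{expl}} S)(\bvarphi_\e(x),x,\e)$, while $T_1 = (\pd_1 S)(\bvarphi_\e(x),x,\e)\big(\D_X\bvarphi_\e(x) + \D_X(\bvarphi_\e(x))\big)$, since $\ud R$ differentiates $S$ only in its first slot and $(\D_X^{\SK}\bvarphi)_\e(x) = \D_X\bvarphi_\e(x) + \D_X(\bvarphi_\e(x))$. Subtracting, the germ-level outer term $(\pd_1 S)(\bvarphi_\e(x),x,\e)(\D_X\bvarphi_\e(x))$ cancels exactly, leaving $-(\pd_1 S)(\bvarphi_\e(x),x,\e)(\D_X(\bvarphi_\e(x))) + (\D_X^{\mathrm{expl}} S)(\bvarphi_\e(x),x,\e)$, which depends on $(\bvarphi_\e(x),x,\e)$ only — because the inner derivative $\D_X(\bvarphi_\e(x))$ is determined by $\bvarphi_\e(x)$ — and hence lies in $\basic[\ell](\Omega)$. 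Keeping the three kinds of differentiation (outer, inner, and explicit in the base point) rigorously apart so that this cancellation is exact is the crux; it is also the structural reason why $\widehat\D_X$ preserves every locality type while $\widetilde\D_X$ upgrades the pointwise ones.
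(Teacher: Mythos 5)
Your argument is correct. Note that the paper does not actually spell out a proof here: it only remarks that the proof is ``similar to that of [papernew, Theorem~25]'', so what you have written is a self-contained version of exactly the intended argument --- the decomposition $\widehat\D_X R = -(\ud R)(\bvarphi)(\D_X^{\SK}\bvarphi) + \widetilde\D_X R$, the observation that $\widetilde\D_X R(\bvarphi)_\e(x)$ depends on $R(\bvarphi)_\e$ only through its germ at $x$ (which yields (ii), including the degenerate case $\ell_x=\star$ where $R(\bvarphi)_\e$ is constant), the affine-curve reduction showing that $(\ud R)(\bvarphi)(\D_X^{\SK}\bvarphi)$ is $\ell$-local whenever $\ell_\bvarphi$ is preserved by $\D_X^{\SK}$, and, for the pointwise types, the exact cancellation of the outer-derivative term $(\pd_1 S)(\bvarphi_\e(x),x,\e)(\D_X\bvarphi_\e(x))$ between the two summands via the representation of \Autoref{basiciso}. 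The only point you leave implicit is that $\widehat\D_X R$ and $\widetilde\D_X R$ are again \emph{smooth} elements of $\basic(\Omega)$, but this is part of the definition of the two derivative operators on $\basic(\Omega)$ and is taken for granted by the paper as well.
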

The proof is similar to that of \cite[Theorem 25, p.~430]{papernew}, the added $\e$-variable being straightforward to handle. A similar statement as in \ref{thm_deriv.1} for the case of $\ell_\bvarphi = \tau_x \circ \bvarphi$ does not appear to be attainable because $\tau_x \circ \bvarphi = \tau_x \circ \bpsi$ does not imply $\tau_y \circ \bvarphi = \tau_y \circ \bpsi$ for $y$ in a neighborhood of $x$.

Note that both derivatives satisfy the Leibniz rule and are equal on the
space 
$\basic[(x,\e)](\Omega)$.

Finally, we define a restriction operator for $(\tau_x\bvarphi,x,\e)$-local elements of the basic space.

\begin{theorem}\label{asdfsd}Let $U \subseteq \Omega$ be open and $R \in \cE[(\tau_x\bvarphi, x, \e)](U)$. Then for each open subset $V \subseteq U$ there exists a unique map $R|_V \in \basic[(\tau_x\bvarphi, x, \e)](V)$ such that for all $W \subseteq V$ open, $\bpsi \in \SK(V)^I$ and $\bvarphi \in \SK(U)^I$ we have the implication
\begin{equation}\label{star}
\bpsi|_W = \bvarphi|_W \Longrightarrow R|_V(\bpsi)|_W = R(\bvarphi)|_W.
\end{equation}
The map $R \mapsto R|_V$ is linear and preserves all locality types $\ell$ of \Autoref{fig2} with $\ell \succeq (\tau_x\bvarphi, x, \e)$.  Moreover, if $V' \subseteq V$ is open then $(R|_V)|_{V'} = R|_{V'}$.
\end{theorem}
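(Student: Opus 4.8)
The plan is to define $R|_V$ by \emph{transporting germs} from $V$ into $U$. Given $\bpsi \in \SK(V)^I$, $x \in V$ and $\e \in I$, I would set $(R|_V)(\bpsi)_\e(x) \coleq R(\bvarphi)_\e(x)$, where $\bvarphi \in \SK(U)^I$ is any net of smoothing kernels on $U$ agreeing with $\bpsi$ on a neighborhood of $x$ (using the canonical inclusion $\cD(V) \hookrightarrow \cD(U)$ by extension by zero). Such extensions exist: choosing an open $W_0 \csub V$ with $x \in W_0$ and a cutoff $\chi \in C^\infty(V)$ with $\supp \chi \csub V$ and $\chi \equiv 1$ on $W_0$, the net given by $\bvarphi_\e \coleq \chi \cdot \bpsi_\e$ (extended by $0$ outside $\supp\chi$) lies in $\SK(U)^I$, smoothness across $\partial(\supp\chi)$ being automatic since $\bvarphi_\e$ vanishes there; it coincides with $\bpsi$ on $W_0$, so $\tau_x\bvarphi = \tau_x\bpsi$. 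Since $R$ is $(\tau_x\bvarphi, x, \e)$-local and any two admissible extensions share the germ $\tau_x\bpsi$ at $x$, the value $R(\bvarphi)_\e(x)$ is independent of the chosen extension. This makes $R|_V$ well defined and, by the same argument, $(\tau_x\bvarphi,x,\e)$-local; linearity of $R \mapsto R|_V$ is immediate because $\bvarphi$ may be chosen independently of $R$.

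The main obstacle is to verify that $R|_V$ is genuinely an element of $\basic(V)$, i.e.\ that $(\bpsi, x) \mapsto (R|_V)(\bpsi)_\e(x)$ is smooth for each fixed $\e$. Since smoothness is local in $x$, I would fix $x_0 \in V$ together with $W_0$ and $\chi$ as above and observe that for every $x \in W_0$ the net $M_\chi\bpsi \coleq (\chi \cdot \bpsi_\e)_\e$ agrees with $\bpsi$ near $x$, whence $(R|_V)(\bpsi)_\e(x) = R(M_\chi\bpsi)_\e(x)$ throughout $W_0$. The multiplication operator $M_\chi \colon \SK(V)^I \to \SK(U)^I$ is linear and continuous (multiplication by a fixed compactly supported smooth function, applied componentwise in $\e$), hence smooth; composing it with the smooth map $R$ shows that $(\bpsi,x)\mapsto R(M_\chi\bpsi)_\e(x)$ is smooth on $\SK(V)^I \times W_0$. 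Covering $V$ by such neighborhoods yields $R|_V \in \basic(V)$.

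The defining implication \eqref{star} is then almost a tautology: if $\bpsi|_W = \bvarphi|_W$ for $W \subseteq V$ open, $\bpsi \in \SK(V)^I$ and $\bvarphi \in \SK(U)^I$, then for each $x \in W$ the net $\bvarphi$ is itself an admissible extension of $\bpsi$ at $x$, so $(R|_V)(\bpsi)_\e(x) = R(\bvarphi)_\e(x)$, giving $(R|_V)(\bpsi)|_W = R(\bvarphi)|_W$. To see that $R \mapsto R|_V$ preserves every locality type $\ell \succeq (\tau_x\bvarphi, x, \e)$ — that is, $\ell_x = x$, $\ell_\e = \e$ and $\ell_\bvarphi \in \{\bvarphi(x), \bvarphi_\e(x), \tau_x\bvarphi, \tau_x\bvarphi_\e\}$ — I would take $\bpsi, \bpsi' \in \SK(V)^I$ with $\ell(\bpsi, x, \e) = \ell(\bpsi', x, \e)$ and use the cutoff extensions $\bvarphi = M_\chi\bpsi$, $\bvarphi' = M_\chi\bpsi'$ for a common $\chi$ with $\chi \equiv 1$ near $x$. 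Because $\chi \equiv 1$ near $x$, the pointwise values and germs at $x$ are unchanged by multiplication with $\chi$, so $\ell_\bvarphi(\bvarphi, x, \e) = \ell_\bvarphi(\bvarphi', x, \e)$ in each of the four cases; $\ell$-locality of $R$ then gives $R(\bvarphi)_\e(x) = R(\bvarphi')_\e(x)$, i.e.\ $(R|_V)(\bpsi)_\e(x) = (R|_V)(\bpsi')_\e(x)$.

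Finally, uniqueness and compatibility both follow from the existence of extensions. If $S \in \basic[(\tau_x\bvarphi, x, \e)](V)$ also satisfies \eqref{star}, then for arbitrary $\bpsi$, $x$, $\e$ I choose an extension $\bvarphi$ agreeing with $\bpsi$ on a neighborhood $W$ of $x$ and read off $S(\bpsi)_\e(x) = R(\bvarphi)_\e(x) = (R|_V)(\bpsi)_\e(x)$, so $S = R|_V$. For $V' \subseteq V$ open, I would verify that $(R|_V)|_{V'}$ satisfies the property defining $R|_{V'}$ and then invoke this uniqueness: given $\bpsi' \in \SK(V')^I$ and $\bvarphi \in \SK(U)^I$ with $\bpsi'|_W = \bvarphi|_W$ for $W \subseteq V'$, I fix $x \in W$ and chain extensions, first extending $\bpsi'$ to some $\btheta \in \SK(V)^I$ near $x$, then $\btheta$ to some $\bvarphi'' \in \SK(U)^I$ near $x$. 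As $\bvarphi''$ agrees with $\bvarphi$ near $x$, the $(\tau_x\bvarphi, x, \e)$-locality of $R$ yields $(R|_V)|_{V'}(\bpsi')_\e(x) = R(\bvarphi'')_\e(x) = R(\bvarphi)_\e(x)$. Hence $(R|_V)|_{V'}$ satisfies \eqref{star}, and uniqueness forces $(R|_V)|_{V'} = R|_{V'}$.
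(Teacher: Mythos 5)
Your construction is, in substance, the paper's own proof with the sheaf-theoretic packaging removed: the paper defines $\widetilde R(\tau_x\bpsi) \coleq \tau_x(R(\rho\bpsi))$ on the \'etal\'e space of $C^\infty(\unterstrich,\cD(U)^I)$ and restricts the induced sheaf morphism, whereas you define $R|_V(\bpsi)_\e(x)$ directly by the same cutoff extension; well-definedness via $(\tau_x\bvarphi,x,\e)$-locality, the verification of \eqref{star}, smoothness via the local factorization through the continuous linear operator $\bpsi \mapsto \chi\cdot\bpsi$ together with the fact that smoothness may be checked on a cover, uniqueness, and transitivity all run as in the paper.

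There is, however, a concrete gap in the locality-preservation step. You characterize the condition $\ell \succeq (\tau_x\bvarphi, x, \e)$ as ``$\ell_x = x$, $\ell_\e = \e$ and $\ell_\bvarphi \in \{\bvarphi(x), \bvarphi_\e(x), \tau_x\bvarphi, \tau_x\bvarphi_\e\}$''. That is not the order of \Autoref{fig2}: $\ell_1 \succeq \ell_2$ means that $\ell_1$-locality \emph{implies} $\ell_2$-locality, and deleting a component of $\ell_2$ yields a \emph{stronger} type. Consequently the types $\succeq (\tau_x\bvarphi, x, \e)$ also include all those with $\ell_x = \star$ and/or $\ell_\e = \star$, for instance $(\bvarphi_\e(x), x)$, $\bvarphi_\e(x)$, $(x,\e)$, $x$, $\e$ and $\star$ (compare the explicit list the paper records after \Autoref{thm_sheaf} for the analogous condition $\ell \succeq (\tau_x\bvarphi_\e,x,\e)$). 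For these your argument does not apply as written, since it fixes a single base point $x$, a single $\e$ and a common cutoff $\chi$, while one must compare $R|_V(\bpsi)_\e(x)$ with $R|_V(\bpsi')_\eta(y)$ for possibly distinct $x \ne y$ and $\e \ne \eta$. The repair is the one the paper uses: take two cutoffs $\rho_1 \equiv 1$ near $x$ and $\rho_2 \equiv 1$ near $y$ and check $\ell(\rho_1\bpsi, x, \e) = \ell(\rho_2\bpsi', y, \eta)$, which holds because multiplication by a cutoff equal to $1$ near the base point changes neither the value $\bpsi_\e(x)$ nor the germ there (and the case $\ell_\bvarphi = \star$ is trivial). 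Without this, your proof establishes preservation only for the four types with $\ell_x = x$ and $\ell_\e = \e$, not for all $\ell \succeq (\tau_x\bvarphi, x, \e)$ as the theorem asserts.
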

\begin{proof}
Let $\cF_1$ and $\cF_2$ be the sheaves of vector spaces on $U$ given by $\cF_1(X) \coleq C^\infty(X, \cD(U)^I)$ and $\cF_2(X) \coleq C^\infty(X)^I$ for $X \subseteq U$ open, with respective sheaf spaces (\'etal\'e spaces) $\widetilde \cF_1$ and $\widetilde \cF_2$ \cite[Section 1.2, p.~110]{MR0345092}. We define a mapping $\widetilde R \colon \widetilde \cF_1 \to \widetilde \cF_2$ as follows: for any germ $\tau_x\bpsi$ in $\widetilde \cF_1$ at some $x \in U$, given by a representative $\bpsi \in C^\infty(X, \cD(U)^I)$ with $X \in \cU_x$, we set
\[ \widetilde R(\tau_x\bpsi) \coleq \tau_x ( R(\bvarphi)) \]
where $\bvarphi \in \SK(U)^I$ is chosen such that $\tau_x\bvarphi = \tau_x \bpsi$. For this purpose take any $\rho \in \cD(X)$ with $\rho \equiv 1$ in an open neighborhood $W$ of $x$ and define $\bvarphi$ as $\rho \cdot \bpsi$, extended to the whole of $U$ by zero. Because $R$ is $(\tau_x\bvarphi, x, \e)$-local, $\widetilde R(\tau_x\bpsi)$ is independent of the specific choice of $\bpsi$ and $\rho$. In fact, suppose we are given another representative $\bpsi'$ of $\tau_x\bpsi$, defined on an open neighborhood $X'$ of $x$, and a function $\rho' \in \cD(X')$ which is $1$ on an open neighborhood $W'$ of $x$. Then $\tau_x(\rho\bpsi) = \tau_x(\rho' \bpsi')$ and hence $\tau_x(R(\rho \bpsi)) = \tau_x(R(\rho' \bpsi'))$. Hence, $\widetilde R$ is well-defined.

$\widetilde R$ clearly preserves the base point $x$, i.e., $\widetilde R ( (\cF_1)_x ) \subseteq (\cF_2)_x$. Moreover, it is continuous for the natural sheaf space topologies. In fact, a basis for the topology of $\widetilde \cF_2$ is given by sets of the form $\{ \tau_x \boldsymbol f : x \in X' \}$ with $X' \subseteq U$ open and $\boldsymbol f \in \cF_2(X')$, and similarly for $\widetilde\cF_1$. Suppose we are given such $\boldsymbol f$ and some germ $\tau_x\bpsi$ in $\widetilde \cF_1$ with $x \in X'$ and $\bpsi \in \cF_1(X)$ for some open neighborhood $X$ of $x$ in $U$ such that $\widetilde R ( \tau_x\bpsi) = \tau_x \boldsymbol f$. With $\rho$ and $W$ as above we have $\widetilde R ( \tau_y \bpsi) = \tau_y ( R ( \rho \cdot \bpsi))$ for all $y \in W$ and $R(\rho \cdot \bpsi) = \boldsymbol f$ in a neighborhood of $x$. Hence, $\widetilde R ( \tau_y\bpsi) = \tau_y \boldsymbol f$ for $y$ in a neighborhood of $x$, which proves continuity of $\widetilde R$.

Consequently, $\widetilde R$ defines a sheaf morphism $\cF_1 \to \cF_2$, i.e., a family of mappings $R_X \colon C^\infty(X, \cD(U)^I) \to C^\infty(X)^I$ (for $X \subseteq U$ open) which are compatible with the restrictions of $\cF_1$ and $\cF_2$  \cite[Section 1.6, p.~114]{MR0345092}. We define $R|_V$ to be the restriction of $R_V$ to $C^\infty(V, \cD(V)^I) \subseteq C^\infty(V, \cD(U)^I)$. Clearly, $R|_V$ satisfies \eqref{star}.

The mapping $R|_V \colon C^\infty(V, \cD(V))^I \to C^\infty(V)^I$ is smooth: given any $x \in V$, choose $\rho \in \cD(V)$ which is 1 on an open neighborhood $W$ of $x$. Then given $c \in C^\infty(\bR, C^\infty(V, \cD(V))^I)$, $R|_V ( c(t))|_W = R(\rho \cdot c(t))|_W$ is smooth in $t$. Because we can cover $V$ by such sets $W$, $R|_V$ is smooth \cite[3.8, p.~28]{KM}.

For uniqueness, suppose any $R' \in \basic[(\tau_x\bvarphi, x, \e)](V)$ satisfies $R'(\bpsi)|_W = R(\bvarphi)|_W$ whenever $\bpsi \in C^\infty(V, \cD(V))^I$ and $\bvarphi \in C^\infty(U, \cD(U))^I$ satisfying $\bpsi|_W = \bvarphi|_W$ are given. For any $x \in V$ choose $\rho$ and $W$ as above. Then $R'(\bpsi)|_W = R(\rho \bpsi)|_W = R|_V(\bpsi)|_W$, which gives $R' = R|_V$.

For locality of $R|_V$ suppose we are given triples $(\bvarphi, x, \e)$ and $(\bpsi, y, \eta)$ satisfying $\ell ( \bvarphi, x, \e) = \ell ( \bpsi, y, \eta)$. To establish
\[
 R|_V(\bvarphi)_\e(x) = R|_V ( \bpsi)_\eta(y)
\]
we have to show that
\begin{equation}\label{april}
\ell ( \rho_1 \bvarphi, x, \e) = \ell ( \rho_2 \bpsi, y, \eta)
\end{equation}
holds, where $\rho_1, \rho_2 \in \cD(V)$ are equal to 1 in a neighborhood of $x$ and $y$, respectively. In case $\ell_\bvarphi \in \{ \tau_x\bvarphi, \tau_x \bvarphi_\e \}$ we have $x=y$ and hence can assume that $\rho_1 = \rho_2$, in which case equation \eqref{april} holds. If $\ell_\bvarphi \in  \{ \bvarphi(x), \bvarphi_\e(x) \}$, \eqref{april} is clear, and if $\ell_\bvarphi = \star$ then it holds trivially.

Finally, we show transitivity of this restriction operator. Suppose $V' \subseteq V$ is open. Given any $x \in V'$ and $\e \in I$, choose $\rho \in \cD(V') \subseteq \cD(V)$ such that $\rho \equiv 1$ on an open neighborhood of $x$. Then for $\bvarphi \in \SK(V')^I$ we see that $(R|_V)|_{V'} ( \bvarphi)_\e(x) = R|_V ( \rho \bvarphi)_\e(x) = R(\rho \bvarphi)_\e(x) = R|_{V'} ( \bvarphi)_\e(x)$, which means that $(R|_V)|_{V'} = R|_{V'}$.
\end{proof}

\begin{definition}Let $U \subseteq \Omega$ be open, $\ell \succeq 
(\tau_x\bvarphi, x, \e)$ and $R \in \cE[\ell](U)$. Then for any open subset $V 
\subseteq U$, the mapping $R|_V \in \cE[\ell](V)$ is called the 
\emph{restriction of $R$ to $V$}.
\end{definition}

It is obvious that we cannot drop the assumption of $(\tau_x\bvarphi, x, 
\e)$-locality for 
the definition of this restriction map.

\section{The quotient construction}\label{sec_quotient}

In this section we will perform the quotient construction on the basic
spaces $\basic^\ell(\Omega)$ which turns $C^\infty(\Omega)$ (viewed as a subspace of $\cD'(\Omega)$) into a subalgebra, provided $\ell \preceq (\bvarphi_\e(x), x)$ or $\ell \preceq (x, \e)$ (see this section's last paragraph).

Moreover, we examine which locality type at least
needs to be assumed in order to obtain a classical theorem which states that 
whenever a representative of a generalized function is known to be moderate, its 
negligibility can be tested for without resorting to derivatives.

As far as testing is concerned, for the definition of test 
objects we use only the minimal conditions which are 
necessary to obtain a Colombeau algebra with the sheaf property.

In the following definition, $B_r(x)$ denotes the Euclidean ball of 
radius $r>0$ with center $x \in \bR^n$ and $\csn(E)$ is the set of continuous 
seminorms of a locally convex space $E$. Moreover, all spaces of linear mappings are endowed with the topology of bounded 
convergence.

\begin{definition}\label{def_testobj} Let a net of smoothing kernels $\bvarphi \in \SK(\Omega)^I$ be given and denote the corresponding net of smoothing operators by $\bPhi \in \cL ( \cD'(\Omega), C^\infty(\Omega))^I$. Then $\bvarphi$ is called a \emph{test object} on $\Omega$ if
 \begin{enumerate}[label=(\roman*)]
  \item\label{def_testobj.1} $\bPhi_\e \to \id$ in $\cL(\cD'(\Omega), \cD'(\Omega))$,
  \item\label{def_testobj.2} $\forall p \in \csn ( \cL(\cD'(\Omega), C^\infty(\Omega)) )$ $\exists N \in \bN$: $p ( \bPhi_\e ) = O (\e^{-N})$,
  \item\label{def_testobj.3} $\forall p \in \csn ( \cL(C^\infty(\Omega), C^\infty(\Omega)))$ $\forall m \in \bN$: $p( \bPhi_\e|_{C^\infty(\Omega)} - \id ) = O(\e^m)$,
  \item\label{def_testobj.4} $\forall x \in \Omega$ $\exists V \in \cU_x$ $\forall r>0$ $\exists \e_0>0$ $\forall y \in V$ $\forall \e < \e_0$: $\supp \bvarphi_\e(y) \subseteq B_r(y)$.
 \end{enumerate}
We denote the set of test objects on $\Omega$ by $S(\Omega)$. Similarly, $\bvarphi$ is called a $0$-test object if it satisfies these conditions with \ref{def_testobj.1} and \ref{def_testobj.3} replaced by the following conditions:
\begin{enumerate}[label=(\roman*')]
 \item\label{def_testobj.1p}  $\bPhi_\e \to 0$ in $\cL(\cD'(\Omega), \cD'(\Omega))$,\addtocounter{enumi}{1}
 \item\label{def_testobj.3p} $\forall p \in \csn ( \cL(C^\infty(\Omega), C^\infty(\Omega)))$ $\forall m \in \bN$: $p( \bPhi_\e|_{C^\infty(\Omega)} ) = O(\e^m)$.
\end{enumerate}
The set of all $0$-test objects on $\Omega$ is denoted by $S^0(\Omega)$.

We say that a subset of $S(\Omega)$ or of $S^0(\Omega)$ is \emph{uniform} if the respective conditions hold uniformly for all its elements.
\end{definition}

Clearly, $S^0(\Omega)$ is a vector space and $S(\Omega)$ an affine space over $S^0(\Omega)$. Defining the product of $\bvarphi \in S^0(\Omega)$ and $f \in C^\infty(\Omega)$ by $(f \bvarphi)_\e(x) \coleq f(x) \cdot \bvarphi_\e(x)$, $S^0(\Omega)$ becomes a $C^\infty(\Omega)$-module.
We have the following definition of moderateness and negligibility for elements of $\basic(\Omega)$, where $\bN_0 \coleq \bN \cup \{ 0 \}$.

\begin{definition}\label{def_modneg}An element $R \in \basic(\Omega)$ is called 
\emph{moderate} if
\begin{gather*}
\forall k \in \bN_0\ \forall p \in \csn ( C^\infty(\Omega))\ \forall \bvarphi \in S(\Omega)\ \forall \bpsi_1,\dotsc,\bpsi_k \in S^0(\Omega)\ \exists N \in \bN:\\
p \bigl( \ud^k R(\bvarphi) (\bpsi_1, \dotsc, \bpsi_k)_\e\bigr) = O(\e^{-N}).
\end{gather*}
The subset of all moderate elements of $\basic(\Omega)$ is denoted by $\moderate(\Omega)$.

An element $R \in \basic(\Omega)$ is called \emph{negligible} if
\begin{gather*}
\forall k \in \bN_0\ \forall p \in \csn ( C^\infty(\Omega))\ \forall \bvarphi \in S(\Omega)\ \forall \bpsi_1,\dotsc,\bpsi_k \in S^0(\Omega)\ \forall m \in \bN:\\
p \bigl( \ud^k R(\bvarphi) (\bpsi_1, \dotsc, \bpsi_k)_\e\bigr) = O(\e^m).
\end{gather*}
The subset of all negligible elements of $\basic(\Omega)$ is denoted by $\negligible(\Omega)$.

By $\moderate^\ell(\Omega)$ and $\negligible^\ell(\Omega)$ we denote the intersection of $\moderate(\Omega)$ and $\negligible(\Omega)$ with $\basic^\ell(\Omega)$, respectively.
\end{definition}

Instead of $\moderate^\ell(\Omega)$ and $\negligible^\ell(\Omega)$ we also write $\moderate[\ell](\Omega)$ and $\negligible[\ell](\Omega)$, respectively.

There are the following canonical embeddings into subspaces of $\basic(\Omega)$:
\begin{alignat}{2}
  \sigma \colon C^\infty(\Omega) & \to \cE[x](\Omega)
               \hphantom{\bvarphi_\e()}  
                              \cong C^\infty(\Omega),
      \quad & \sigma(f)(\bvarphi)_\e(x) & := f(x), \label{embed_sigma} \\
  \iota \colon\hphantom{^\infty} \cD'(\Omega) & \to \cE[\bvarphi_\e(x)](\Omega) 
\cong         C^\infty(\cD(\Omega)),
\quad & \iota\kern1pt(u)\kern1pt(\bvarphi)_\e(x) &:= \langle u, \bvarphi_\e(x) 
\rangle. \label{embed_iota}
\end{alignat}
Furthermore, for any fixed test object $\btheta \in S(\Omega)$ one can define an embedding
\[
 \iota_\btheta \colon \cD'(\Omega) \to \cE[ (x, \e) ] (\Omega) \cong C^\infty ( 
\Omega )^I, 
\quad \iota_\btheta (u)_\e(x) \coleq \langle u, \btheta_\e(x) \rangle.
\]
This is the embedding used for special algebras.

We now prove (cf.~\cite[Theorem 6.2, p.~203]{bigone}) that in the above definition of moderateness and negligibility, if $R$ is $(\bvarphi_\e, x, \e)$-local the estimates can equivalently be taken uniformly for $\bvarphi$ and $\bpsi_1,\dotsc,\bpsi_k$ in uniform subsets of $S(\Omega)$ and $S^0(\Omega)$, respectively:

\begin{lemma}\label{lem_testuniform}
Let $R \in \basic(\Omega)$ be $(\bvarphi_\e, x, \e)$-local.
 \begin{enumerate}[label=(\roman*)]
  \item The mapping $R$ is moderate if and only if
\begin{gather*}
\forall k \in \bN_0\ \forall p \in \csn ( C^\infty(\Omega))\ \forall A \subseteq S(\Omega), B \subseteq S^0(\Omega) \textrm{ uniform}\ \exists N \in \bN: \\
  p \bigl( \ud^k R(\bvarphi)(\bpsi_1, \dotsc, \bpsi_k)_\e\bigr) = O(\e^{-N})
\end{gather*}
uniformly for $\bvarphi \in A$ and $\bpsi_1, \dotsc, \bpsi_k \in B$.
  \item The mapping $R$ is negligible if and only if
\begin{gather*}
\forall k \in \bN_0\ \forall p \in \csn ( C^\infty(\Omega))\ \forall A \subseteq S(\Omega), B \subseteq S^0(\Omega)\textrm{ uniform}\ \forall m \in \bN: \\
p \bigl( \ud^k R(\bvarphi)(\bpsi_1, \dotsc, \bpsi_k)_\e\bigr) = O(\e^m)
\end{gather*}
  uniformly for $\bvarphi \in A$, $\bpsi_1, \dotsc, \bpsi_k \in B$.
 \end{enumerate}
\end{lemma}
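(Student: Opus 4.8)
The plan is to prove both equivalences by reducing the displayed uniform estimates to the pointwise estimates of \Autoref{def_modneg}. The implication from uniform to pointwise is immediate: any singleton $\{\bvarphi\} \subseteq S(\Omega)$ and any finite subset $\{\bpsi_1, \dots, \bpsi_k\} \subseteq S^0(\Omega)$ are uniform (one just combines the finitely many constants, exponents and neighborhoods occurring in \Autoref{def_testobj}), so specializing the uniform estimate to $A = \{\bvarphi\}$ and $B = \{\bpsi_1, \dots, \bpsi_k\}$ recovers \Autoref{def_modneg}. The substance of the lemma therefore lies in the converse direction, which I would establish by contraposition together with a splicing construction.

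The key preliminary observation, and the place where $(\bvarphi_\e, x, \e)$-locality is used, is that the differentials occurring in the estimates only read off the data at the current index. Indeed, by \Autoref{basiciso} we may write $R(\bvarphi)_\e = \widetilde R(\bvarphi_\e)_\e$ for a smooth map $\widetilde R \colon \SK(\Omega) \to C^\infty(\Omega)^I$; writing $\widetilde R_\e \coleq \pr_\e \circ \widetilde R \colon \SK(\Omega) \to C^\infty(\Omega)$ and noting that the evaluation-at-$\e$ maps $\pr_\e$ on domain and codomain are linear and continuous, the chain rule gives
\[ \ud^k R(\bvarphi)(\bpsi_1, \dots, \bpsi_k)_\e = \ud^k \widetilde R_\e(\bvarphi_\e)\bigl( (\bpsi_1)_\e, \dots, (\bpsi_k)_\e \bigr). \]
Thus the $\e$-component depends only on $\bvarphi_\e$ and on the $\e$-components of the directions $\bpsi_i$, not on their values at any other index.

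For the contrapositive in the moderate case, suppose the uniform estimate fails: there are $k$, $p$, a uniform $A \subseteq S(\Omega)$ and a uniform $B \subseteq S^0(\Omega)$ such that for every $N$ the bound $O(\e^{-N})$ is violated at arbitrarily small $\e$ by suitable elements of $A$ and $B$. Diagonalizing, I obtain $\e_j \downarrow 0$ together with $\bvarphi^{(j)} \in A$ and $\bpsi_1^{(j)}, \dots, \bpsi_k^{(j)} \in B$ satisfying $p\bigl( \ud^k R(\bvarphi^{(j)})(\bpsi_1^{(j)}, \dots, \bpsi_k^{(j)})_{\e_j} \bigr) > \e_j^{-j}$. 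Choosing a partition of $(0,1]$ into half-open intervals $J_j$ with $\e_j \in J_j$, I splice by setting $\bvarphi_\e \coleq \bvarphi^{(j)}_\e$ and $(\bpsi_i)_\e \coleq (\bpsi_i^{(j)})_\e$ for $\e \in J_j$. Since $A$ and $B$ are uniform and all conditions \ref{def_testobj.1}--\ref{def_testobj.4} are asymptotic in $\e$, the spliced nets again lie in $S(\Omega)$ and $S^0(\Omega)$: for each condition one invokes the single constant, exponent and neighborhood valid across $A$ (resp.\ $B$) and uses that near $0$ the spliced net agrees, index by index, with some $\bvarphi^{(j)}$ (resp.\ $\bpsi_i^{(j)}$). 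Finally, evaluating the diagonal identity above at $\e = \e_j$ yields $\ud^k R(\bvarphi)(\bpsi_1, \dots, \bpsi_k)_{\e_j} = \ud^k R(\bvarphi^{(j)})(\bpsi_1^{(j)}, \dots, \bpsi_k^{(j)})_{\e_j}$, so $p(\,\cdot\,_{\e_j}) > \e_j^{-j}$ along $\e_j \to 0$, contradicting pointwise moderateness of $R$. The negligibility case is identical, except that one fixes the single exponent $m$ for which the uniform estimate fails and arranges $p(\,\cdot\,_{\e_j}) > j\,\e_j^{m}$, which contradicts the pointwise bound $O(\e^m)$; the analogous conditions \ref{def_testobj.1p} and \ref{def_testobj.3p} are preserved by the same gluing argument.

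The main obstacle I anticipate is not the splicing bookkeeping but making the two ingredients cooperate: one must be certain that the operation $\ud^k$, which is a priori global on the net $\bvarphi \in \SK(\Omega)^I$, genuinely factors through the single index $\e$. This is exactly what $(\bvarphi_\e, x, \e)$-locality provides, and it is indispensable — without it the value at $\e_j$ of the spliced net could depend on the discarded tails $\bvarphi^{(j')}$ with $j' \neq j$, and the contradiction would collapse. Once the factorization is in hand, checking that each defining condition of a (\,$0$-\,)test object survives the gluing is routine and relies solely on its \emph{uniform} validity over $A$ and $B$, which is the sole role played by uniformity in the argument.
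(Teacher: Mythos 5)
Your proposal is correct and follows essentially the same route as the paper: the nontrivial direction is proved by extracting a diagonal sequence $\e_j\downarrow 0$ witnessing the failure of the uniform estimate, splicing the corresponding elements of $A$ and $B$ into single (0-)test objects (which is exactly where uniformity is used), and invoking $(\bvarphi_\e,x,\e)$-locality to identify $\ud^k R(\bvarphi)(\bpsi_1,\dotsc,\bpsi_k)_{\e_j}$ with $\ud^k R(\bvarphi^{(j)})(\bpsi_1^{(j)},\dotsc,\bpsi_k^{(j)})_{\e_j}$, contradicting pointwise moderateness. Your explicit chain-rule factorization through $\widetilde R_\e\circ\pr_\e$ is a welcome elaboration of what the paper only states in the remark following its proof, and the cosmetic difference between your bound $\e_j^{-j}$ and the paper's $n\,\e_n^{-n}$ is immaterial.
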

\begin{proof}
 Assume that $R$ is moderate but the condition in (i) does not hold. This means that there exist $k \in \bN_0$, $p \in \csn ( C^\infty (\Omega))$ and uniform subsets $A \subseteq S(\Omega)$ and $B \subseteq S^0(\Omega)$ such that
\begin{gather*}
 \forall N \in \bN\ \forall C >0\ \forall \e_0 \in I\ \exists \eta < \e_0\ \exists \bvarphi \in A\ \exists \bpsi_1, \dotsc, \bpsi_k \in B:\\
p(\ud^kR(\bvarphi)(\bpsi_1, \dotsc, \bpsi_k)_\eta) > C \cdot \eta^{-N}.
\end{gather*}
 From this we obtain a strictly decreasing sequence $(\e_n)_n \searrow 0$ and sequences $(\bvarphi^n)_n$ in $A$ and $(\bpsi_1^n)_n$, $\dotsc$, $(\bpsi^n_k)_n$ in $B$ such that
 \begin{equation}\label{dezemberalpha}
  p\bigl( \ud^k R(\bvarphi^n)(\bpsi^n_1, \dotsc, \bpsi^n_k)_{\e_n} \bigr) > n \cdot \e_n^{-n} \qquad (n \in \bN).
 \end{equation}
Choose any $\widetilde\bvarphi \in S(\Omega)$ and $\widetilde \bpsi_1, \dotsc, \widetilde \bpsi_k \in S^0(\Omega)$ such that $\widetilde \bvarphi_{\e_n} = \bvarphi^n_{\e_n}$ and $\widetilde \bpsi_{i, \e_n} = \bpsi^n_{i, \e_n}$ for all $n$ and $i=1 \dotsc k$, which is possible because $A$ and $B$ are uniform. By assumption then there is $N \in \bN$ such that
\[ \exists C>0\ \exists \e_0>0\ \forall \e<\e_0: p\bigl(\ud^kR(\widetilde \bvarphi)(\widetilde \bpsi_1, \dotsc, \widetilde \bpsi_k)_\e\bigr) \le C \e^{-N}. \]
Now for $n \ge \max(C,N)$ and $\e = \e_n < \e_0$ this gives a contradiction to \eqref{dezemberalpha}, hence the condition in (i) must hold. The case of negligibility is shown the same way.
\end{proof}

The role of assuming $(\bvarphi_\e, x, \e)$-locality in the preceding lemma is the following: to exploit the totality of conditions \eqref{dezemberalpha} by means of the composite test objects $\widetilde \bvarphi$ and $\widetilde \bpsi_1,\dotsc, \widetilde \bpsi_k$, we need that
\[ \ud^k R(\bvarphi^n)(\bpsi^n_1, \dotsc, \bpsi^n_k)_{\e_n} = \ud^k R (\widetilde \bvarphi) ( \widetilde \bpsi_1, \dotsc, \widetilde \bpsi_k)_{\e_n} \]
for all $n$; this relation, however, just results as a consequence of $(\bvarphi_\e, x, \e)$-locality.

The use of uniform sets of (0-)test objects allows one to prove the following important result, which by now is classical in all Colombeau-type algebras of generalized functions:

\begin{theorem}\label{thm123}Let $R \in \moderate(\Omega)$ be $(\bvarphi_\e, x, 
\e)$-local. 
Then $R$ is negligible if and only if
\[ \forall K \subseteq \Omega\textrm{ compact } \forall \bvarphi \in S(\Omega)\ \forall m \in \bN: \sup_{x \in K} \abso{R(\bvarphi)_\e(x) } = O(\e^m). \]
\end{theorem}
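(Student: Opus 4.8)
The plan is to prove both implications, the forward one being immediate and the converse requiring two nested interpolation arguments. For the ``only if'' direction, note that for any compact $K \subseteq \Omega$ the map $f \mapsto \sup_{x \in K}\abso{f(x)}$ is a continuous seminorm on $C^\infty(\Omega)$; taking $k=0$ and this seminorm in the definition of negligibility immediately yields the stated estimate. For the ``if'' direction, since $R$ is $(\bvarphi_\e, x, \e)$-local and moderate, Lemma~\ref{lem_testuniform} lets me replace negligibility by its uniform version, so it suffices to show that for all compact $K$, all uniform $A \subseteq S(\Omega)$, $B \subseteq S^0(\Omega)$, all $k$, all multi-indices $\alpha$ and all $m$, the estimate $\sup_{x\in K}\abso{\pd^\alpha_x \ud^k R(\bvarphi)(\bpsi_1,\dots,\bpsi_k)_\e} = O(\e^m)$ holds uniformly for $\bvarphi \in A$ and $\bpsi_i \in B$ (recall that the seminorms $f \mapsto \sup_K\abso{\pd^\alpha f}$ generate the topology of $C^\infty(\Omega)$). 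I would reach this through a functional interpolation removing the $\ud^k$ and a spatial interpolation removing the $\pd^\alpha_x$.

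First I would upgrade the hypothesis to a uniform zeroth-order estimate: for every compact $K$, every uniform $A \subseteq S(\Omega)$ and every $m$, $\sup_{\bvarphi \in A}\sup_{x\in K}\abso{R(\bvarphi)_\e(x)} = O(\e^m)$. This follows by exactly the composite-test-object argument of Lemma~\ref{lem_testuniform}: a violating sequence $(\bvarphi^n, x_n, \e_n)$ with $\bvarphi^n \in A$ can, by uniformity of $A$, be assembled into a single $\widetilde\bvarphi \in S(\Omega)$ with $\widetilde\bvarphi_{\e_n} = \bvarphi^n_{\e_n}$, and $(\bvarphi_\e,x,\e)$-locality then gives $R(\widetilde\bvarphi)_{\e_n}(x_n) = R(\bvarphi^n)_{\e_n}(x_n)$, contradicting the hypothesis applied to the single object $\widetilde\bvarphi$. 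Next, to remove the functional derivatives, I would use polarization: since $\ud^k R(\bvarphi)$ is a symmetric $k$-linear form determined by its diagonal values $\ud^k R(\bvarphi)(\bpsi,\dots,\bpsi)_\e = \tfrac{d^k}{dt^k}\big|_{t=0} R(\bvarphi + t\bpsi)_\e$, and since each partial sum $\sum_{i\in S}\bpsi_i$ again lies in $S^0(\Omega)$ and ranges over a uniform set, it suffices to bound these diagonal values. For fixed $x$ and $\e$ set $g(t) \coleq R(\bvarphi + t\bpsi)_\e(x)$; the family $\{\bvarphi + t\bpsi : \abso t \le 1\}$ is again uniform in $S(\Omega)$ (one checks conditions \ref{def_testobj.1}--\ref{def_testobj.4}, the support condition being the only point needing a word), so the uniform zeroth-order estimate gives $\sup_{\abso t\le1}\sup_{x\in K}\abso{g(t)} = O(\e^m)$ and uniform moderateness gives $\sup_{\abso t\le1}\sup_{x\in K}\abso{g^{(k+1)}(t)} = O(\e^{-N})$. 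A one-dimensional Landau--Kolmogorov inequality on $[-1,1]$ then bounds $\abso{g^{(k)}(0)}$, up to the lower-order term $\sup\abso g$, by a fixed power product of these two quantities, which is $O(\e^{(m-Nk)/(k+1)})$; as $m$ is arbitrary this is $O(\e^{m'})$ for every $m'$, uniformly.

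Finally, writing $h_\e \coleq \ud^k R(\bvarphi)(\bpsi_1,\dots,\bpsi_k)_\e \in C^\infty(\Omega)$, the previous step yields $\sup_K\abso{h_\e} = O(\e^m)$ for all $m$ while moderateness (with the seminorm $f \mapsto \sup_{K'}\abso{\pd^\alpha f}$ on a slightly larger compact $K'$) yields $\sup_{K'}\abso{\pd^\alpha h_\e} = O(\e^{-N})$. The classical spatial interpolation then applies verbatim: by induction on $\abso\alpha$, a Taylor expansion in each coordinate direction expresses $\pd^\alpha h_\e$ through a zeroth-order difference of lower derivatives plus a remainder controlled by the moderate bound, and optimizing the step size as a power of $\e$ upgrades every such derivative to $O(\e^m)$ for all $m$; all bounds remain uniform over $A$ and $B$. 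This establishes uniform negligibility, hence negligibility of $R$ by Lemma~\ref{lem_testuniform}.

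I expect the genuine difficulty to lie in the functional interpolation of the middle paragraph rather than in the two interpolation inequalities, which are classical. The crux is that Landau--Kolmogorov requires a \emph{uniform} zeroth-order bound along the segment $\{\bvarphi + t\bpsi : \abso t\le 1\}$, which is not supplied by the pointwise hypothesis and can only be recovered from it through the composite-test-object device; this is precisely the point at which $(\bvarphi_\e,x,\e)$-locality (via Lemma~\ref{lem_testuniform}) is indispensable, just as it was for moderateness. The remaining care is bookkeeping: verifying that the translated and summed families stay uniform in $S(\Omega)$ and $S^0(\Omega)$, for which condition \ref{def_testobj.4} is the only slightly delicate one.
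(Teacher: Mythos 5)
Your proof is correct, and it follows the paper's overall skeleton --- a functional interpolation removing $\ud^k$ followed by a spatial interpolation removing $\pd^\alpha_x$, with $(\bvarphi_\e,x,\e)$-locality entering exactly where you place it, namely through the composite-test-object device of \Autoref{lem_testuniform} --- but the first stage is implemented differently. The paper never passes to the diagonal: it inducts on $k$ one order at a time, Taylor-expanding $\ud^{k_0}R(\bvarphi+\bpsi)(\bpsi_1,\dotsc,\bpsi_{k_0})_\e$ and substituting $\bpsi=\e^{m+N}\bpsi_{k_0+1}$, which isolates $\ud^{k_0+1}R(\bvarphi)(\bpsi_1,\dotsc,\bpsi_{k_0+1})_\e$ up to errors of order $\e^m$; this is in effect a hand-rolled first-order Landau--Kolmogorov step applied directly to the multilinear derivative, so no polarization is needed. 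You instead polarize to diagonal values and invoke the classical finite-interval Landau--Kolmogorov inequality for $t\mapsto R(\bvarphi+t\bpsi)_\e(x)$ in one shot. The trade-off: the paper's route is self-contained and avoids the (harmless but real) bookkeeping of checking that the partial sums $\sum_{i\in S}\bpsi_i$ and the segments $\{\bvarphi+t\bpsi : \abso{t}\le 1\}$ remain uniform; yours is shorter modulo the quoted inequality and makes both the interpolation structure and the precise point where locality is indispensable more transparent. Your explicit intermediate step --- the uniform zeroth-order estimate over uniform subsets of $S(\Omega)$ --- is in fact tacitly used by the paper as well (its bound $\sup_{s\in[0,1]}p\bigl(\ud^{k_0}R(\bvarphi+s\bpsi_{k_0+1})(\bpsi_1,\dotsc,\bpsi_{k_0})_\e\bigr)=O(\e^{2m+N})$ ``by assumption'' requires exactly this), so isolating it is a small gain in rigor. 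The spatial stage is identical to the paper's. The one point worth recording explicitly is that the exponent $(m-Nk)/(k+1)$ tends to infinity with $m$ only because the $N$ furnished by moderateness is independent of $m$ --- which it is, since the moderateness test involves no $m$ --- so your conclusion is sound.
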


\begin{proof}
Assume $R$ satisfies the negligibility test of \Autoref{def_modneg} for $k = k_0 \in \bN_0$. Testing for negligibility of $R$ with $k = k_0 + 1$, fix $p \in \csn ( C^\infty(\Omega))$, $\bvarphi \in S(\Omega)$, $\bpsi_1, \dotsc, \bpsi_{k_0+1} \in S^0(\Omega)$ and $m \in \bN$. In order to estimate
\[ p \bigl( \ud^{k_0+1} R ( \bvarphi ) ( \bpsi_1, \dotsc, \bpsi_{k_0+1})_\e\bigr) \]
we use the fact that for any $\bpsi \in S^0(\Omega)$ and $\e \in I$, by Taylor's formula \cite[5.12, p.~59]{KM} the expression $\ud^{k_0}R ( \bvarphi + \bpsi)(\bpsi_1, \dotsc, \bpsi_{k_0} )_\e$ equals
\begin{gather*}
\ud^{k_0} R(\bvarphi)(\bpsi_1, \dotsc, \bpsi_{k_0})_\e + \ud^{k_0+1} R(\bvarphi)(\bpsi_1, \dotsc, \bpsi_{k_0}, \bpsi)_\e \\
+ \int_0^1 (1-t)\ud^{k_0 + 2} R(\bvarphi + t \bpsi) (\bpsi_1, \dotsc, \bpsi_{k_0}, \bpsi, \bpsi)_\e\,\ud t.
\end{gather*}
Because $\{ \bvarphi + s \bpsi_{k_0+1}\ |\ s \in [0,1] \}$ is a uniform set of test objects, by moderateness of $R$ there exists $N \in \bN$ such that
\[ \sup_{s \in [0,1]} p \bigl( \ud^{k_0+2} R ( \bvarphi + s \bpsi_{k_0+1})(\bpsi_1, \dotsc, \bpsi_{k_0}, \bpsi_{k_0+1}, \bpsi_{k_0+1})_\e\bigr) = O(\e^{-N}). \]
Moreover, by assumption we have
\[ \sup_{s \in [0,1]} p \bigl( \ud^{k_0} R ( \bvarphi + s \bpsi_{k_0+1})(\bpsi_1, \dotsc, \bpsi_{k_0})_\e\bigr) = O(\e^{2m+N}). \]
Setting $\bpsi = \e^{m+N}\bpsi_{k_0+1}$ above, we see that for any $\e \in (0,1]$ we have
\begin{gather*}
 \ud^{k_0+1}R(\bvarphi)(\bpsi_1, \dotsc, \bpsi_{k_0}, \bpsi_{k_0+1})_\e = \\
\e^{-m-N} \bigl( \ud^{k_0} R ( \bvarphi + \e^{m+N} \bpsi_{k_0+1})(\bpsi_1, \dotsc, \bpsi_{k_0})_\e + \ud^{k_0}R(\bvarphi)(\bpsi_1, \dotsc, \bpsi_{k_0})_\e\bigr) \\
- \e^{m+N} \int_0^1 \ud^{k_0+2} R ( \bvarphi + t \e^{m+N} \bpsi_{k_0+1})( \bpsi_1, \dotsc, \bpsi_{k_0}, \bpsi_{k_0+1}, \bpsi_{k_0+1})_\e\,\ud t.
\end{gather*}
From this the estimate
\[ p \bigl( \ud^{k_0+1}R(\bvarphi)(\bpsi_1, \dotsc, \bpsi_{k_0+1})_\e\bigr) = O(\e^m) \]
follows. Inductively, we see that $R$ is negligible already if the test of \Autoref{def_modneg} holds for $k=0$.

Our claim hence is established if we can show the estimate
\begin{equation}\label{equazione}
\forall K \csub \Omega\ \forall \alpha \in \bN_0^n\ \forall \bvarphi \in S(\Omega)\ \forall m \in \bN\ : \sup_{x \in K} \abso{ \pd_x^\alpha ( R ( \bvarphi )_\e )(x)} = O (\e^m).
\end{equation}

Similarly to the above, assume that $R$ satisfies \eqref{equazione} for $\alpha = \alpha_0 \in \bN_0^n$.
Fix $K$, $\bvarphi$ and $m$. This, together with moderateness of $R$, yields the existence of $N \in \bN$ such that both
\[ \sup_{x \in L} \abso{\pd_x^{\alpha_0 + 2 e_i} ( R ( \bvarphi)_\e)(x) } = O(\e^{-N}) \]
and
\[ \sup_{x \in L} \abso{ \pd_x^{\alpha_0} ( R ( \bvarphi)_\e)(x)} = O(\e^{2 m + N} ) \]
hold, where $e_i$ is the $i$th Euclidean basis vector of $\bR^n$ and $L$ is a compact neighborhood of $K$ with $K \subseteq L^\circ$. For small $\e$ and $x \in K$ we have the expansion
\begin{multline*}
 \pd_x^{\alpha_0} ( R( \bvarphi)_\e)(x + \e^{m + N}e_i) = \pd_x^{\alpha_0} ( R(\bvarphi)_\e) (x) + \e^{m + N} \pd_x^{\alpha_0 + e_i} ( R(\bvarphi)_\e)(x) \\
+ \e^{2m + 2N} \int_0^1 (1-t) \pd_x^{\alpha_0 + 2 e_i}(R(\bvarphi)_\e )(x + t \e^{m + N} e_i) \, \ud t.
\end{multline*}
We see that $\pd_x^{\alpha_0 + e_i} ( R(\bvarphi)_\e) = O(\e^{m})$ on $K$ follows, which gives \eqref{equazione} for all $\alpha \in \bN_0$ by induction and hence establishes the claim.
\end{proof}

The notion of association is defined as follows:
\begin{definition}
 Let $R \in \basic(\Omega)$. We say that $R$ is associated to $0$ if $\forall \bvarphi \in S(\Omega)$ we have $R(\bvarphi)_\e \to 0$ in $\cD'(\Omega)$. In this case we write $R \approx 0$.
\end{definition}
Note that every negligible element of $\basic(\Omega)$ is associated to $0$.

We finalize the quotient construction by following the scheme of \cite[Section 1.3, p.~54]{GKOS}:

\begin{enumerate}
 \item[(D1)] For each locality type $\ell$, the basic space $\basic^\ell(\Omega)$ is an associative commutative algebra with unit $1$. Moreover, there is a linear embedding $\iota \colon \cD'(\Omega) \to \basic^\ell(\Omega)$ if $\ell \preceq \bvarphi_\e(x)$ and an algebra embedding $\sigma \colon C^\infty(\Omega) \to \basic^\ell(\Omega)$ if $\ell \preceq x$. For each fixed $\btheta\in S(\Omega)$ there is a linear embedding $\iota_\btheta \colon \cD'(\Omega) \to \basic^\ell(\Omega)$ if $\ell \preceq (x,\e)$:
 \begin{align*}
  \iota \colon \cD'(\Omega) &\to \cE [ \bvarphi_\e(x)](\Omega), \\
  \iota_\btheta \colon \cD'(\Omega) &\to \cE [ ( x,\e)] (\Omega), \\
  \sigma \colon C^\infty(\Omega) &\to \cE [x] (\Omega).
 \end{align*}
 \item[(D2)] For each vector field $X \in C^\infty(\Omega, \bR^n)$ there are derivative operators $\widehat \D_X \colon \basic[\ell](\Omega) \to \basic[\ell](\Omega)$ and $\widetilde \D_X \colon \basic[\ell](\Omega) \to \basic[\ell'](\Omega)$ (with $\ell'$ as in \Autoref{thm_deriv}) which agree on $\basic[(x,\e)](\Omega)$.

On $\cE^\ell(\Omega)$, the derivative $\widehat \D_X$ extends $\D_X$ 
via $\iota$
if $\ell \preceq \bvarphi_\e(x)$ and via $\sigma$ if $\ell \preceq x$. The 
derivative $\widetilde \D_X$ extends $\D_X$ via $\iota$ on the level of 
association if $\ell \preceq \bvarphi_\e(x)$, via $\iota_\btheta$ on the level of association if $\ell \preceq 
(x,\e)$ \cite[Theorem 25 (v), p.~431]{papernew}, and via $\sigma$ if $\ell \preceq x$.
 \item[(D3)] $\moderate^\ell(\Omega)$ is a vector subspace of $\basic^\ell(\Omega)$.
 \item[(D4)] $\negligible^\ell(\Omega)$ is a vector subspace of $\basic^\ell(\Omega)$.
 \item[(D5)] There is a functorial assignment giving for each diffeomorphism $\mu \colon \Omega \to \Omega'$ a mapping $\mu^* \colon \basic^\ell (\Omega') \to \basic^\ell (\Omega)$ that extends the distributional pullback $\mu^* \colon \cD'(\Omega') \to \cD'(\Omega)$ via $\iota$ if $\ell \preceq \bvarphi_\e(x)$, via $\iota_\btheta$ on the level of association if $\ell \preceq (x,\e)$, and that extends the pullback $\mu^* \colon C^\infty(\Omega') \to C^\infty(\Omega)$ via $\sigma$ if $\ell \preceq x$. The map $\mu^*$ takes the form $\mu^* R = \mu^* \circ R \circ \mu_*$, where the last mapping $\mu_* \colon \SK(\Omega)^I \to \SK(\Omega')^I$ depends functorially on $\mu$.
\end{enumerate}
The following then is immediately seen:
\begin{enumerate}
 \item[(T1)] \begin{enumerate}[label=(\roman*)]
\item $\iota(\cD'(\Omega)) \subseteq \moderate [ \bvarphi_\e(x)](\Omega)$,\newline $\iota_\btheta ( \cD'(\Omega)) \subseteq \moderate [(x, \e)](\Omega)$.
\item $\sigma(C^\infty(\Omega)) \subseteq \moderate [x](\Omega)$.
\item $(\iota - \sigma)(C^\infty(\Omega)) \subseteq \negligible [ (\bvarphi_\e(x), x)](\Omega)$,\newline
$\iota(fg) - \iota(f)\iota(g) \in \negligible [ \bvarphi_\e(x)](\Omega)$ for all $f,g \in C^\infty(\Omega)$,\newline
$(\iota_\btheta - \sigma)(C^\infty(\Omega)) \subseteq \negligible [ (x, \e) ](\Omega)$.
\item $\iota(\cD'(\Omega)) \cap \negligible [ \bvarphi_\e(x)] ( \Omega) = \{ 0 \}$,\newline
$\iota_\btheta(\cD'(\Omega)) \cap \negligible [ (x, \e)] (\Omega) = \{ 0 \}$.
\end{enumerate}
\item[(T2)] $\moderate^\ell(\Omega)$ is a subalgebra of $\basic^\ell(\Omega)$.
\item[(T3)] $\negligible^\ell(\Omega)$ is an ideal in $\moderate^\ell(\Omega)$.
\item[(T4)] $\widehat \D_X ( \moderate^\ell(\Omega ) ) \subseteq \moderate^\ell ( \Omega)$, $\widetilde \D_X ( \moderate^\ell(\Omega ) ) \subseteq \moderate^{\ell'} ( \Omega)$.
\item[(T5)] $\widehat \D_X ( \negligible^\ell(\Omega ) ) \subseteq \negligible^\ell ( \Omega)$, $\widetilde \D_X ( \negligible^\ell(\Omega ) ) \subseteq \negligible^{\ell'} ( \Omega)$.
\item[(T6)] The set of (0)-test objects is invariant under the action induced by $\mu$.
\item[(T7)] $\mu^*$ preserves moderateness.
\item[(T8)] $\mu^*$ preserves negligibility.
\end{enumerate}
The Colombeau algebra $\quotient^\ell(\Omega)$ then is defined as follows:
\begin{enumerate}
 \item[(D6)] $\cG^\ell(\Omega) \coleq \moderate^\ell(\Omega) / \negligible^\ell(\Omega)$.
\end{enumerate}

Summarizing these properties for the quotient, we have:
\begin{itemize}
 \item $\quotient^\ell(\Omega)$ is an
 associative commutative algebra with unit, which contains $\cD'(\Omega)$ as a 
linear subspace via $\iota$ if $\ell \preceq \bvarphi_\e(x)$
and via $\iota_\btheta$ if $\ell \preceq (x,\e)$; it contains 
$C^\infty(\Omega)$ as a subalgebra via $\sigma$ if $\ell \preceq x$. 
 \item For $\ell \preceq (\bvarphi_\e(x), x)$ we have $\iota|_{C^\infty(\Omega)} = 
\sigma$, for $\ell \preceq (x,\e)$ we have $\iota_\btheta|_{C^\infty(\Omega)} = 
\sigma$.
 \item $\quotient^\ell(\Omega)$ is a differential algebra with respect to 
$\widehat \D_X$ and, if it is invariant under $\widetilde \D_X$ (as in 
\Autoref{thm_deriv}), also with respect to $\widetilde \D_X$. The derivatives 
$\widehat\D_X$ extend the derivatives from $\cD'(\Omega)$ if $\ell \preceq 
\bvarphi_\e(x)$ and from $C^\infty(\Omega)$ if $\ell \preceq x$, and the 
derivatives $\widetilde\D_X$ extend the derivatives from $\cD'(\Omega)$ on the 
level of association via $\iota$ if $\ell \preceq \bvarphi_\e(x)$ and via 
$\iota_\btheta$ if $\ell \preceq (x,\e)$, and from $C^\infty(\Omega)$ via 
$\sigma$ if $\ell \preceq x$.
\item Moreover, the construction is diffeomorphism 
invariant, i.e., the pullback $\mu^* \colon \quotient^\ell(\Omega') \to 
\quotient^\ell(\Omega)$ is well-defined; it is compatible with the classical 
pullback of distributions via $\iota$ if $\ell \preceq \bvarphi_\e(x)$, on the 
level of association via $\iota_\btheta$ if $\ell \preceq (x,\e)$, and 
compatible with the pullback of smooth functions via $\sigma$ if $\ell \preceq 
x$.
\end{itemize}

\section{The sheaf property}\label{sec_sheaf}

The first step in obtaining the sheaf property of the quotient algebra $\quotient^\ell(\Omega)$ is to show that the space of test objects or, more precisely, a quotient of it is a sheaf. The definition of the restriction map for test objects essentially rests on the following result:

\begin{lemma}\label{rhorest}For any open subset $V \subseteq \bR^n$ there exists a mapping $\rho_V \in C^\infty(V, \cD(V))$ and an open neighborhood $X$ of the diagonal in $V \times V$ such that $\rho_V(x)(y) = 1$ for all $(x,y)$ in $X$.
\end{lemma}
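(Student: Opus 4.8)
The plan is to obtain $\rho_V$ by rescaling a single fixed bump function by a radius adapted to the distance to the boundary of $V$. First I would fix, once and for all, a function $\beta \in \cD(\bR^n)$ with $\beta \equiv 1$ on the closed ball $\overline{B_{1/2}(0)}$ and $\supp \beta \subseteq B_1(0)$.

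Next I would produce a \emph{smooth} radius function $r \colon V \to (0,\infty)$ such that $\overline{B_{r(x)}(x)} \csub V$ for every $x \in V$. Setting $\delta(x) \coleq \mathrm{dist}(x, \bR^n \setminus V)$ (with the convention $\delta \equiv \infty$ in the trivial case $V = \bR^n$, where one may simply take $r \equiv 1$), the function $\delta$ is positive and $1$-Lipschitz on $V$, and it suffices to find a smooth $r$ with $0 < r(x) \le \delta(x)/2$; such an $r$ exists by a routine partition-of-unity argument dominating the continuous positive function $\delta/2$ from below by a smooth positive one. With $r$ at hand I would define
\[ \rho_V(x)(y) \coleq \beta\Bigl( \tfrac{y-x}{r(x)} \Bigr) \qquad (x \in V,\ y \in \bR^n) \]
together with
\[ X \coleq \{ (x,y) \in V \times V : \abso{y-x} < r(x)/2 \}. \]
For fixed $x$ one has $\rho_V(x)(y) = 0$ as soon as $\abso{y-x} \ge r(x)$, so $\supp \rho_V(x) \subseteq \overline{B_{r(x)}(x)} \csub V$ and hence $\rho_V(x) \in \cD(V)$. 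The set $X$ is open (since $r$ is continuous) and contains the diagonal; and for $(x,y) \in X$ we have $\abso{(y-x)/r(x)} < 1/2$, whence $\rho_V(x)(y) = \beta\bigl((y-x)/r(x)\bigr) = 1$, as required.

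It then remains to verify that $\rho_V \in C^\infty(V, \cD(V))$. The map $(x,y) \mapsto \beta((y-x)/r(x))$ is an ordinary smooth function on $V \times \bR^n$, since $r$ is smooth and strictly positive. To upgrade this to smoothness of $x \mapsto \rho_V(x)$ as a map into the $LF$-space $\cD(V)$, I would argue locally: for any compact $K \csub V$ the set $L \coleq \{ x + z : x \in K,\ \abso{z} \le r(x)\}$ is a compact subset of $V$ — it is the continuous image of the compact set $\{(x,z) : x \in K,\ \abso{z} \le r(x)\}$ and lies in $V$ by the previous paragraph — and $\supp \rho_V(x) \subseteq L$ for all $x \in K$. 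Hence over $K$ the map takes values in the fixed Fréchet step $\cD_L(V)$, into which it is smooth because the defining two-variable function together with all its $y$-derivatives is jointly smooth; as smoothness into $\cD(V)$ is a local question and the inclusion $\cD_L(V) \hookrightarrow \cD(V)$ is continuous, $\rho_V$ is smooth into $\cD(V)$.

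I expect the only genuinely non-formal points to be the construction of the smooth radius function $r$ dominated by $\delta/2$ and, more importantly, the passage from ordinary joint smoothness to smoothness into the $LF$-space $\cD(V)$. The latter hinges on the \emph{local uniform compactness} of the supports, captured by the set $L$ above, which is exactly the step where openness of $V$ and continuity of $r$ must be used with care.
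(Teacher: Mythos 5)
Your proof is correct. It follows the same overall strategy as the paper's -- rescale a fixed bump function, verify that the supports $\supp\rho_V(x)$ stay in a fixed compact $L \csub V$ for $x$ ranging over a compact set, and upgrade joint smoothness to smoothness into $\cD(V)$ via the continuous inclusion $\cD_L(V) \hookrightarrow \cD(V)$ -- but it organizes the varying radius differently. The paper writes $\rho_V(x)(y) = \sum_i \chi_i(x)\, f\bigl((y-x)/r_i\bigr)$, a locally finite superposition of bumps with a \emph{constant} radius $r_i$ on each patch of a cover, so the partition of unity enters directly in the formula for $\rho_V$; you instead push the partition-of-unity argument into the construction of a single \emph{smooth} radius function $r(x) \le \delta(x)/2$ and then use the one-term formula $\beta\bigl((y-x)/r(x)\bigr)$. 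Your version buys a cleaner closed-form expression and an explicit open neighborhood $X = \{(x,y) : \abso{y-x} < r(x)/2\}$ of the diagonal, at the cost of the (routine but not free) smooth minorant lemma for $\delta/2$; the paper's version avoids that lemma but has to extract the neighborhood of the diagonal a posteriori by taking $r = \min_{i\in J} r_i/2$ over the finitely many patches meeting a given neighborhood. Both arguments hinge on exactly the same non-formal point you identify, namely the local uniform compactness of the supports.
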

\begin{proof}
Cover $V$ by a locally finite family of open, relatively compact sets $V_i$ such that $\overline{V_i} \subseteq V$. For each $i$ choose $r_i>0$ such that $\overline{V_i} + \overline{B}_{r_i}(0) \subseteq V$, where $\overline{B}_{r_i}(0)$ denotes the closed Euclidean ball of radius $r_i$ centered at the origin. Let $(\chi_i)_i$ be a partition of unity subordinate to $(V_i)_i$ and fix a function $f \in C^\infty(\bR^n)$ satisfying $f(y) = 1$ for $\abso{y} \le 1/2$ and $f(y) = 0$ for $\abso{y} \ge 1$. Define
\[ \rho_V(x)(y) \coleq \sum_i \chi_i(x) \cdot f \left( \frac{y-x}{r_i} \right) \qquad (x,y \in V).
\]
It is clear that $\rho_V \in C^\infty(V, C^\infty(V))$. In order to show that $\rho_V$ is smooth into $\cD(V)$, let $x_0 \in V$ and choose an open neighborhood $W$ of $x_0$ in $V$ and a finite set $J$ such that $W \cap \supp \chi_i \ne 0$ implies $i \in J$. Then, for any $x \in W$ and $y \in \bR^n$ we have
\[ f\left( \frac{y-x}{r_i} \right) \ne 0 \Longrightarrow \abso{ \frac{y-x}{r_i} } < 1 \Longrightarrow y \in B_{r_i}(x), \]
hence $\supp \rho_V(x) \subseteq K \coleq \bigcup_{i \in J} ( \overline{V_i} + \overline{B}_{r_i}(0))$ for all $x \in W$, and $K \subseteq V$ is compact. Since $\cD_K(V)$ has the topology induced by $C^\infty(V)$, $\rho_V$ is smooth into $\cD(V)$. Moreover, if we set $r = \min_{i \in J} r_i/2$ then $\rho_V(x)(y) = 1$ for all $x \in W$ and $y \in B_r(x)$. Covering the diagonal by such sets, we obtain the claim.
\end{proof}

The following notion is useful for handling properties of test objects which are given locally for small $\e$:

\begin{definition}
\label{def_equiv}
Given open sets $U,V,W \subseteq \Omega$ with $W \subseteq U \cap V$, $\bvarphi \in \SK(U)^I$ and $\bpsi \in \SK(V)^I$ we say
\[ \bvarphi \sim_W \bpsi \mathrel{:}\Longleftrightarrow \forall x \in W\ \exists Z \in \cU_x ( W )\ \exists \e_0>0\ \forall \e<\e_0: \bvarphi_\e|_Z = \bpsi_\e|_Z, \]
where $\cU_x(W)$ is the neighborhood filter of $x$ in $W$. Instead of ``$\bvarphi \sim_W \bpsi$'' we also say ``$\bvarphi \sim \bpsi$ on $W$'', and if $W = U \cap V$ we simply write ``$\bvarphi \sim \bpsi$''.
\end{definition}

As to the the preceding definition, $\bvarphi_\e(z)$ and $\bvarphi_\e(z)$ (for $z \in Z$) have to be viewed as elements of the same space, say, of $\cD(\bR^n)$; in fact, being equal they are even elements of $\cD(U \cap V)$.

On the set of all nets of smoothing kernels on open subsets of $\Omega$ containing $W$, $\sim_W$ amounts to a relation that clearly is reflexive, symmetric and transitive. More generally, for $\bvarphi_i \in \SK(U_i)^I$ ($i=1,2,3$) we have the implication
\begin{gather*}\label{almosttrans}
\bvarphi_1 \sim \bvarphi_2\textrm{ on }U_1 \cap U_2\textrm{ and }\bvarphi_2 \sim \bvarphi_3\textrm{ on }U_2 \cap U_3 \\
\Longrightarrow \bvarphi_1 \sim \bvarphi_3\textrm{ on }U_1 \cap U_2 \cap U_3.
\end{gather*}
We denote the class of $\bvarphi \in \SK(U)^I$ in $\SK(U)^I /{\sim_U}$ by $[\bvarphi]$. Moreover, we denote by $S^1(U)$ the vector space of all $\bvarphi \in \SK(U)^I$ satisfying only \ref{def_testobj.4} of \Autoref{def_testobj}.

\begin{lemma}\label{skpresheaf}
Let $U,V$ be open subsets of $\Omega$ with $V \subseteq U$ and let $\rho_V \in C^\infty(V, \cD(V))$ be equal to 1 in a neighborhood of the diagonal in $V \times V$. Then the mapping
\[
 \begin{aligned}
r_V \colon \SK(U) &\to \SK(V)\\
\vec\varphi &\mapsto \rho_V \cdot \vec\varphi|_V
 \end{aligned}
\]
is linear and continuous. Applying it componentwise to elements $\bvarphi, \bpsi$ of $S^1(U)$, we obtain:
\begin{enumerate}[label=(\roman*)]
 \item\label{skpresheaf.5} $r_V \bvarphi \sim \bvarphi$,
 \item\label{skpresheaf.1} $r_V \bvarphi \in S^1(V)$,
 \item\label{skpresheaf.2} $\bvarphi \sim \bpsi$ implies $r_V \bvarphi \sim r_V \bpsi$,
 \item\label{skpresheaf.3} for any open subset $W \subseteq V$ we have $r_W ( r_V \bvarphi) \sim r_W \bvarphi$,
 \item\label{skpresheaf.4} the class of $r_V\bvarphi$ modulo $\sim$ does not depend on the choice of $\rho_V$.
\end{enumerate}
\end{lemma}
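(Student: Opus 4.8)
The plan is to dispose of well-definedness, linearity and continuity first, and then to concentrate all the real work on part (i), from which the remaining four assertions follow either directly or formally. For well-definedness, note that for each $x\in V$ we have $\rho_V(x)\in\cD(V)$, so $\rho_V(x)\cdot\vec\varphi(x)$ is again compactly supported inside $V$, while smoothness in $x$ is inherited from that of $\rho_V$ and $\vec\varphi$; hence $r_V\vec\varphi\in\SK(V)$. Linearity is immediate, since $\rho_V$ enters as a fixed factor and restriction is linear. For continuity I would factor $r_V$ as the restriction map $C^\infty(U,\cD(U))\to C^\infty(V,\cD(U))$ followed by pointwise multiplication with the fixed kernel $\rho_V$; both are continuous (restriction of smooth mappings, and the pointwise product, which is induced by the continuous bilinear multiplication $\cD(V)\times\cD(U)\to\cD(V)$ with one slot frozen to $\rho_V$), so the composite is continuous.

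The heart of the lemma is part (i). Here I would exploit that $\rho_V$ equals $1$ on a neighborhood of the diagonal together with the support-concentration condition \ref{def_testobj.4} of \Autoref{def_testobj}, which defines $S^1(U)$. Fix $x_0\in V$. Since $\rho_V$ equals $1$ on an open neighborhood of the diagonal and $(x_0,x_0)$ lies in it, a tube can be extracted around $(x_0,x_0)$, yielding $W\in\cU_{x_0}(V)$ and $r>0$ with $\rho_V(x)(y)=1$ whenever $x\in W$ and $y\in B_r(x)$. By \ref{def_testobj.4} applied to $\bvarphi$ there is a neighborhood of $x_0$ and an $\e_0>0$ with $\supp\bvarphi_\e(z)\subseteq B_r(z)$ for $z$ in that neighborhood and $\e<\e_0$. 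On the intersection $Z$ of the two neighborhoods we then have $\rho_V(z)\equiv1$ on $\supp\bvarphi_\e(z)$, so $(r_V\bvarphi)_\e(z)=\rho_V(z)\cdot\bvarphi_\e(z)=\bvarphi_\e(z)$ for all $\e<\e_0$; by \Autoref{def_equiv} this is precisely $r_V\bvarphi\sim_V\bvarphi$, i.e.\ $r_V\bvarphi\sim\bvarphi$. Part (ii) then costs nothing extra: multiplication by $\rho_V$ can only shrink supports, so $\supp(r_V\bvarphi)_\e(z)\subseteq\supp\bvarphi_\e(z)$, and condition \ref{def_testobj.4} for $\bvarphi$ transfers verbatim to $r_V\bvarphi$.

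The remaining items are largely formal. For (iii) I would argue directly from \Autoref{def_equiv}: if $\bvarphi\sim\bpsi$ on $U$, then at each $x\in V$ there are $Z$ and $\e_0$ with $\bvarphi_\e(z)=\bpsi_\e(z)$ for $z\in Z\cap V$ and $\e<\e_0$; multiplying by $\rho_V(z)$ gives $(r_V\bvarphi)_\e(z)=(r_V\bpsi)_\e(z)$, that is $r_V\bvarphi\sim_V r_V\bpsi$. For (iv) and (v) I would not recompute but instead combine (i) with the symmetry and transitivity of $\sim$ noted after \Autoref{def_equiv}, together with the obvious fact that $\sim_V$ implies $\sim_W$ whenever $W\subseteq V$. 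For (v), two admissible choices $\rho_V,\rho_V'$ each yield $r_V\bvarphi\sim_V\bvarphi$ by (i), whence $r_V\bvarphi\sim_V r_V'\bvarphi$ by transitivity. For (iv), applying (i) once to $r_V\bvarphi\in S^1(V)$ on $W\subseteq V$ and once to $\bvarphi$ on $W\subseteq U$ gives the chain $r_W(r_V\bvarphi)\sim_W r_V\bvarphi\sim_W\bvarphi\sim_W r_W\bvarphi$, and transitivity of $\sim_W$ on nets defined over open sets containing $W$ closes it.

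The only genuinely substantive step, and the place where I expect the real work, is part (i): everything hinges on correctly marrying the two ``near the diagonal'' statements — $\rho_V\equiv1$ near the diagonal on the one hand, and the supports of $\bvarphi_\e$ concentrating at the diagonal for small $\e$ on the other — so that the product $\rho_V\cdot\bvarphi_\e$ collapses to $\bvarphi_\e$ on a common neighborhood for all sufficiently small $\e$.
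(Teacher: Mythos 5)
Your proposal is correct and follows essentially the same route as the paper's proof: the core of the argument for (i) -- extracting a tube $\{(x,y): x\in W,\ y\in B_r(x)\}$ from the diagonal neighborhood on which $\rho_V\equiv 1$ and intersecting with the neighborhood furnished by condition \ref{def_testobj.4} of \Autoref{def_testobj} -- is exactly the paper's argument, and the formal chains you use for (iii)--(v) coincide with those in the paper. The only (immaterial) deviation is that you obtain (ii) directly from the shrinking of supports under multiplication by $\rho_V$, whereas the paper deduces it from (i).
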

\begin{proof}
Linearity and continuity are clear.

For \ref{skpresheaf.5} fix $x_0 \in V$ and an open neighborhood $X$ of the diagonal in $V \times V$ such that $\rho_V(x)(y) = 1$ for all $(x,y) \in X$. As $f \colon \bR^{2n} \to \bR^{2n}$, $(x,y) \mapsto (x, y-x)$ is a homeomorphism, $f(X)$ is an open neighborhood of $(x_0,0)$ and hence contains an open set of the form $W_1 \times B_r(0) \ni (x_0, 0)$. Now $\{ (x,y) : x \in W_1, y \in B_r(x) \} = f^{-1} ( W_1 \times B_r(0)) \subseteq X$ is an open neighborhood of $(x_0, x_0)$ and $\rho_V(x)(y) = 1$ for all $x \in W_1$ and $y \in B_r(x)$. By \Autoref{def_testobj}\,\ref{def_testobj.4} there is a neighborhood $W_2$ of $x_0$ in $U$ and $\e_0>0$ such that $\supp \bvarphi_\e(x) \subseteq B_r(x)$ for $\e < \e_0$ and $x \in W_2$. Hence, for $x \in W_1 \cap W_2$ and $\e < \e_0$ we have $(r_V \bvarphi)_\e(x)(y) = \rho_V(x)(y) \cdot \bvarphi_\e(x)(y) = \bvarphi_\e(x)(y)$ for all $y$.

\ref{skpresheaf.1} follows directly from \ref{skpresheaf.5}, \ref{skpresheaf.2} is clear from $r_V \bvarphi \sim \bvarphi \sim \bpsi \sim r_V \bpsi$, \ref{skpresheaf.3} from $r_W(r_V\bvarphi) \sim r_V\bvarphi \sim \bvarphi \sim r_W \bvarphi$ and \ref{skpresheaf.4} from \ref{skpresheaf.5}.
\end{proof}

According to \Autoref{skpresheaf}, $r_V$ induces a map from $S^1(U)/{\sim_U}$ to $S^1(V)/{\sim_V}$ which -- by \ref{skpresheaf.3} -- we are entitled to view as restriction map. Thus we can write $[\bvarphi]|_V \coleq [r_V \bvarphi]$ where the specific choice of $\rho_V$ in defining $r_V$ does not matter.

Note that for $f \in C^\infty(U)$ and $\vec\varphi \in \SK(U)$ we have $r_V ( f \cdot \vec\varphi) = f|_V \cdot r_V \vec\varphi$. This means that $U \mapsto S^1(U)/{\sim_U}$ is a presheaf of $C^\infty$-modules on $\Omega$. We have even more:

\begin{proposition}\label{sksheaf}
$U \mapsto S^1(U)/{\sim_U}$ is a sheaf of $C^\infty$-modules on $\Omega$. \end{proposition}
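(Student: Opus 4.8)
The plan is to verify the two sheaf axioms for an arbitrary open cover $(U_i)_i$ of an open set $U\subseteq\Omega$, since the presheaf structure and the compatibility of the restriction maps $[\bvarphi]|_V=[r_V\bvarphi]$ have already been established. Throughout I would use three facts repeatedly: that the relation $\sim$ of \Autoref{def_equiv} is tested \emph{pointwise} in $x$ (for small $\e$), that it is reflexive, symmetric and transitive on nets defined over sets containing the region in question, and that $r_V\bvarphi\sim\bvarphi$ on $V$ by \ref{skpresheaf.5}; together these let me pass freely between a section and any representative. For \emph{separatedness}, suppose $\bvarphi,\bpsi\in S^1(U)$ satisfy $[\bvarphi]|_{U_i}=[\bpsi]|_{U_i}$ for every $i$, i.e.\ $r_{U_i}\bvarphi\sim_{U_i}r_{U_i}\bpsi$. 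Combining this with $r_{U_i}\bvarphi\sim\bvarphi$ and $r_{U_i}\bpsi\sim\bpsi$ on $U_i$ and using transitivity gives $\bvarphi\sim_{U_i}\bpsi$ for each $i$. As every $x\in U$ lies in some $U_i$ and $\sim$ is purely pointwise in $x$, the local datum that $\sim_{U_i}$ provides at $x$ is exactly what $\sim_U$ requires; hence $\bvarphi\sim_U\bpsi$, i.e.\ $[\bvarphi]=[\bpsi]$.

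For \emph{gluing}, suppose we are given $[\bvarphi_i]\in S^1(U_i)/{\sim_{U_i}}$ with $[\bvarphi_i]|_{U_i\cap U_j}=[\bvarphi_j]|_{U_i\cap U_j}$. Inserting $r_{U_i\cap U_j}$ and invoking \ref{skpresheaf.5} as above, the compatibility hypothesis reduces to $\bvarphi_i\sim\bvarphi_j$ on $U_i\cap U_j$. I would then choose a locally finite smooth partition of unity $(\chi_i)_i$ on $U$ with $\supp\chi_i\subseteq U_i$ and $\sum_i\chi_i=1$, and define $\bvarphi\in\SK(U)^I$ by $\bvarphi_\e(x)\coleq\sum_i\chi_i(x)\cdot(\bvarphi_i)_\e(x)$, each summand extended by zero off $U_i$. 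Local finiteness makes this a finite sum near every point, so each $\bvarphi_\e$ lies in $C^\infty(U,\cD(U))$. To see $\bvarphi\sim\bvarphi_i$ on $U_i$, fix $x\in U_i$; by local finiteness there is a neighborhood of $x$ on which only the finitely many $\chi_j$ with $x\in\supp\chi_j$ survive, and for each such $j$ one has $x\in U_i\cap U_j$, so $\bvarphi_j\sim\bvarphi_i$ there yields agreement of $(\bvarphi_j)_\e$ with $(\bvarphi_i)_\e$ near $x$ for small $\e$. Intersecting these finitely many neighborhoods, taking the minimal $\e$-threshold, and using $\sum_j\chi_j=1$ collapses $\bvarphi_\e$ to $(\bvarphi_i)_\e$ on a neighborhood of $x$; hence $[\bvarphi]|_{U_i}=[\bvarphi_i]$ once $\bvarphi\in S^1(U)$ is confirmed.

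It then remains to verify $\bvarphi\in S^1(U)$, i.e.\ condition \ref{def_testobj.4} of \Autoref{def_testobj}. Fixing $x\in U$ and, as above, a neighborhood $N$ and a finite set $J_x=\{\,j:x\in\supp\chi_j\,\}$ with $\bvarphi_\e(y)=\sum_{j\in J_x}\chi_j(y)(\bvarphi_j)_\e(y)$ for $y\in N$, each $j\in J_x$ satisfies $x\in U_j$ with $\bvarphi_j\in S^1(U_j)$, so \ref{def_testobj.4} supplies a neighborhood $V_j\ni x$ controlling $\supp(\bvarphi_j)_\e(y)$. Taking $V=N\cap\bigcap_{j\in J_x}V_j$ and, for a given $r>0$, letting $\e_0$ be the minimum of the finitely many thresholds, one gets $\supp\bvarphi_\e(y)\subseteq\bigcup_{j\in J_x}\supp(\bvarphi_j)_\e(y)\subseteq B_r(y)$ for all $y\in V$ and $\e<\e_0$, which is exactly \ref{def_testobj.4}.

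I expect the genuine obstacle to be this last verification rather than the gluing formula itself: the relation $\sim$ is only pointwise in $x$ and carries an $\e$-threshold that may vary from point to point, whereas \ref{def_testobj.4} demands a \emph{single} neighborhood $V$ on which the support control is uniform (one $\e_0$ valid for all $y\in V$ once $r$ is chosen). The locally finite partition of unity is precisely what reconciles the two, since it reduces every local computation to a finite sum and hence to a finite minimum of thresholds; keeping track of which $\chi_j$ are active near $x$, and why $x\in U_j$ for exactly those $j$, is the point that needs care.
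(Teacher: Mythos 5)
Your proposal is correct and follows essentially the same route as the paper: separatedness via the pointwise-in-$x$ nature of $\sim$ together with \Autoref{skpresheaf}\,\ref{skpresheaf.5}, and gluing via the identical partition-of-unity formula $\bvarphi_\e=\sum_i\chi_i\cdot(\bvarphi_i)_\e$, with the restriction identity $[\bvarphi]|_{U_i}=[\bvarphi_i]$ obtained by collapsing the locally finite sum near each point for small $\e$. The only (welcome) addition is your explicit verification of condition \ref{def_testobj.4} for the glued object, which the paper leaves implicit since $\bvarphi$ agrees with $\bvarphi_i$ near each point of $U_i$ for small $\e$.
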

\begin{proof}
Let $(U_i)_i$ be a family of open subsets of $\Omega$ and $U$ their union. Suppose first that we are given $\bvarphi \in S^1(U)$ such that $[\bvarphi]|_{U_i} = 0$ for all $i$. Then each $x \in U$ is contained in some $U_i$ and for any choice of $\rho_{U_i}$ we can choose a neighborhood $W_1$ of $x$ in $U_i$ and $\e_1>0$ such that $(\rho_{U_i} \bvarphi)_\e|_{W_1} = 0$ $\forall \e < \e_1$. Moreover, there is a neighborhood $W_2$ of $x$ in $U_i$ and $\e_2>0$ such that $(\rho_{U_i} \bvarphi)_\e|_{W_2} = \bvarphi_\e|_{W_2}$ for $\e < \e_2$. Hence, $\bvarphi_\e|_{W_1 \cap W_2} = 0$ for $\e < \min(\e_1, \e_2)$, which gives $\bvarphi \sim 0$.

Next, let a family $(\bvarphi^i)_i$ with $\bvarphi^i \in S^1(U_i)$ be given such that $[\bvarphi^i]|_{{U_i \cap U_j}} = [\bvarphi^j]_{U_i \cap U_j}$ for all $i,j$ with $U_i \cap U_j \ne \emptyset$. Let $(\chi_i)_i$ be a partition of unity subordinate to $(U_i)_i$ and define $\bvarphi \in \SK(U)^I$ by $\bvarphi_\e \coleq \sum_i \chi_i \cdot \bvarphi^i_\e$.

In order to see that $[\bvarphi]|_{U_i} = [\bvarphi^i]$, fix $x_0 \in U_i$ and choose an open neighborhood $W$ of $x_0$ in $U_i$ and a finite index set $J$ such that $x_0 \in U_j$ for all $j \in J$ and $\bvarphi_\e(x) = \sum_{j \in J} \chi_j(x) \bvarphi^j_\e(x)$ for $x \in W$. Because each $\bvarphi^j_\e$ equals $\bvarphi^i_\e$ in a neighborhood of $x_0$ for small $\e$, the sum equals $\bvarphi^i_\e$ in a neighborhood of $x_0$ for small $\e$, which gives the claim.
\end{proof}

The following is easily seen.
\begin{lemma}\label{lem_testobjrest}
 Let $(U_i)_i$ be a family of open subsets of $\Omega$ and $U = \bigcup_i U_i$.

If an element $\bvarphi \in S^1(U)$ satisfies any of the conditions \ref{def_testobj.1}, \ref{def_testobj.1p}, \ref{def_testobj.2}, \ref{def_testobj.3} or \ref{def_testobj.3p} of \Autoref{def_testobj} and $\bvarphi \sim \bpsi \in S^1(U)$ then also $\bpsi$ satisfies that condition. 

Moreover, $\bvarphi$ satisfies any of these conditions if and only if for each $i$, some representative of $[\bvarphi]|_{U_i}$ satisfies it.
\end{lemma}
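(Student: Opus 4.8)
The plan is to show that each of the conditions \ref{def_testobj.1}, \ref{def_testobj.1p}, \ref{def_testobj.2}, \ref{def_testobj.3}, \ref{def_testobj.3p} is simultaneously invariant under $\sim$ and local in $x$, and to read off both assertions from this. First I would unwind every condition into estimates indexed by elementary data. Since all operator spaces carry the topology of bounded convergence, a generating family of continuous seminorms on $\cL(E,F)$ is $p(T) = \sup_{v \in B} q(Tv)$ with $B \subseteq E$ bounded and $q \in \csn(F)$; for $F = C^\infty(U)$ one may take $q(f) = \sup_{x \in K,\, \abso{\alpha}\le k}\abso{\pd^\alpha f(x)}$ with $K \csub U$, and for $F = \cD'(U)$ one may take $q(v) = \sup_{\psi \in C}\abso{\langle v,\psi\rangle}$ with $C$ bounded in $\cD(U)$ (hence with supports in a common compact). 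Thus every condition reduces to an $\e$-asymptotic statement about the numbers $\langle u, \pd_x^\alpha\bvarphi_\e(x)\rangle$ for $x$ in a compact $K$ and $u$ in a bounded set, to be used together with the support-shrinking property \ref{def_testobj.4} built into $S^1$: for $x$ near a fixed point and $\e$ small, $\supp\bvarphi_\e(x) \subseteq B_r(x)$ with $r$ as small as desired.

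For the first assertion I would use that $\bvarphi \sim \bpsi$ on $U$ means every $x \in U$ has a neighborhood $Z$ and an $\e_0>0$ with $\bvarphi_\e|_Z = \bpsi_\e|_Z$ for $\e < \e_0$. Covering a given compact $K$ by finitely many such $Z$ and taking the minimum of the corresponding $\e_0$ yields an open neighborhood of $K$ on which $\bvarphi_\e$ and $\bpsi_\e$ coincide once $\e$ is small; since they agree on an open set, so do all their $x$-derivatives, hence so do $\pd_x^\alpha\langle u, \bvarphi_\e(x)\rangle = \langle u, \pd_x^\alpha\bvarphi_\e(x)\rangle$ and, after integrating against a test function, the dual pairings. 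Consequently each relevant seminorm of $\bPhi_\e$ takes the same value as that of $\bPsi_\e$ for small $\e$, so every growth, decay or convergence statement transfers from $\bvarphi$ to $\bpsi$; symmetry of $\sim$ gives the equivalence. This invariance also shows that whether a representative of $[\bvarphi]|_{U_i}$ satisfies a condition is independent of the representative, so the second assertion reduces to: $\bvarphi$ satisfies the condition on $U$ if and only if $r_{U_i}\bvarphi$ does on $U_i$ for every $i$.

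The $(\Leftarrow)$ direction of this local characterization I would handle by localizing the compact data. Given $K \csub U$, cover it by finitely many $U_{i_j}$ and choose compacts $K_j \csub U_{i_j}$ with $K \subseteq \bigcup_j K_j$. On $K_j$, for small $\e$ the diagonal property of $\rho_{U_{i_j}}$ (\Autoref{rhorest}, \Autoref{skpresheaf}\,\ref{skpresheaf.5}) together with \ref{def_testobj.4} gives $(r_{U_{i_j}}\bvarphi)_\e(x) = \bvarphi_\e(x)$ with support in $U_{i_j}$, so that $\langle u, \bvarphi_\e(x)\rangle = \langle u|_{U_{i_j}}, (r_{U_{i_j}}\bvarphi)_\e(x)\rangle$; as restriction $\cD'(U) \to \cD'(U_{i_j})$ (resp.\ $C^\infty(U) \to C^\infty(U_{i_j})$) is continuous and maps bounded sets to bounded sets, the estimate on each $K_j$ follows from the assumed estimate for $r_{U_{i_j}}\bvarphi$, and taking the maximum over the finitely many $j$ yields the estimate on $K$. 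For \ref{def_testobj.1}/\ref{def_testobj.1p} I would first split the test function by a partition of unity subordinate to the cover so that each piece is supported in a single $U_{i_j}$.

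The $(\Rightarrow)$ direction is where I expect the one genuine difficulty, since a bounded family $B_i \subseteq \cD'(U_i)$ (or $C^\infty(U_i)$) appearing in a $U_i$-seminorm need not extend to $U$. I would circumvent this by a cutoff: for fixed $K \csub U_i$ pick a compact neighborhood $L$ and $\chi \in \cD(U_i)$ with $\chi \equiv 1$ on $L$; for $x \in K$ and small $\e$ one has $\supp\bvarphi_\e(x) \subseteq L$, whence $\langle u, (r_{U_i}\bvarphi)_\e(x)\rangle = \langle u, \bvarphi_\e(x)\rangle = \langle \chi u, \bvarphi_\e(x)\rangle = \bPhi_\e(\hat u)(x)$, where $\hat u$ is the extension by zero of $\chi u \in \cE'(U_i)$ to $U$ (and likewise $\hat f = \chi f \in \cD(U) \subseteq C^\infty(U)$ in the smooth case). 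Since $u \mapsto \chi u$ and extension by zero are continuous, the sets $\hat B_i$ are bounded in $\cD'(U)$ (resp.\ $C^\infty(U)$), so the $U_i$-seminorm of $r_{U_i}\bvarphi$ is dominated by the corresponding $U$-seminorm of $\bvarphi$ evaluated on $\hat B_i$; the assumed condition on $U$ then supplies the required bound, the same cutoff covering the $x$-derivatives (by agreement on a neighborhood of $K$) and the convergence and decay versions verbatim. The only points needing care are the uniform choice of $\e_0$ from compactness and the verification that these cutoff-and-extension operations preserve boundedness.
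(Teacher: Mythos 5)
Your proof is correct. Note that the paper itself offers no argument here --- the lemma is introduced with ``The following is easily seen'' --- so there is no official proof to compare against; your write-up supplies exactly the details that the authors left implicit. The three observations that carry the argument are all present and correctly deployed: (1) every one of the conditions \ref{def_testobj.1}, \ref{def_testobj.1p}, \ref{def_testobj.2}, \ref{def_testobj.3}, \ref{def_testobj.3p} is phrased through seminorms of bounded convergence whose evaluation involves only the values $\pd_x^\alpha\langle u,\bvarphi_\e(x)\rangle$ for $x$ in a compact set, so all five conditions localize in $x$; (2) the relation $\sim$ together with a compactness argument yields $\bvarphi_\e=\bpsi_\e$ on a full neighborhood of any given compact set for $\e$ small, which combined with (1) gives the first assertion and reduces the second to comparing $\bvarphi$ with the specific representatives $r_{U_i}\bvarphi$; (3) the only genuinely delicate step --- that in the direction from $U$ to $U_i$ a bounded set in $\cD'(U_i)$ or $C^\infty(U_i)$ need not restrict from $U$ --- is correctly resolved by the cutoff $u\mapsto\chi u$ and extension by zero, using property \ref{def_testobj.4} of \Autoref{def_testobj} to guarantee $\supp\bvarphi_\e(x)\subseteq L$ with $\chi\equiv 1$ on $L$, so that the $U_i$-seminorms of $r_{U_i}\bvarphi$ are dominated by $U$-seminorms of $\bvarphi$ over bounded sets. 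The reliance on \Autoref{rhorest} and \Autoref{skpresheaf}\,\ref{skpresheaf.5} to identify $(r_{U_i}\bvarphi)_\e(x)$ with $\bvarphi_\e(x)$ near compacta for small $\e$ is also exactly what is needed. I see no gap.
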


\begin{corollary}\label{cor_testobjsheaf}
$U \mapsto S^0(U)/{\sim_U}$ is a sheaf of $C^\infty$-modules on $\Omega$ and $U \mapsto S(U)/{\sim_U}$ is a sheaf of affine spaces over $U \mapsto S^0(U)/{\sim_U}$ on $\Omega$.
\end{corollary}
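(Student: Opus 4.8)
The plan is to realize both assignments as subsheaves of the sheaf $U \mapsto S^1(U)/{\sim_U}$ provided by \Autoref{sksheaf}, with \Autoref{lem_testobjrest} doing the work of tracking the defining conditions of $S^0$ and $S$ under restriction and gluing. Recall that $S^0(U)$ consists of the $\bvarphi \in S^1(U)$ additionally satisfying \ref{def_testobj.1p}, \ref{def_testobj.2} and \ref{def_testobj.3p}, while $S(U)$ consists of those satisfying \ref{def_testobj.1}, \ref{def_testobj.2} and \ref{def_testobj.3}. By the first assertion of \Autoref{lem_testobjrest} every one of these conditions is invariant under $\sim_U$, hence is a property of $\sim_U$-classes; therefore $S^0(U)/{\sim_U}$ and $S(U)/{\sim_U}$ are well-defined subsets of $S^1(U)/{\sim_U}$, the inclusions being injective since all three quotients are formed with respect to the same relation.

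First I would verify that these subsets are subpresheaves, i.e.\ that the restriction $[\bvarphi] \mapsto [\bvarphi]|_V = [r_V\bvarphi]$ of $U \mapsto S^1(U)/{\sim_U}$ carries $S^0(U)/{\sim_U}$ into $S^0(V)/{\sim_V}$. Given an open $V \subseteq U$ and $\bvarphi \in S^0(U)$, apply the ``only if'' direction of \Autoref{lem_testobjrest} to an open cover of $U$ having $V$ among its members (for instance $\{V,U\}$): as $\bvarphi$ satisfies each of \ref{def_testobj.1p}, \ref{def_testobj.2}, \ref{def_testobj.3p}, some representative of $[\bvarphi]|_V$ does too, whence $r_V\bvarphi \in S^0(V)$ by the first assertion of the lemma. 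The identical argument applies to $S$. Compatibility with the algebraic structures is then immediate: the identity $r_V(f \cdot \vec\varphi) = f|_V \cdot r_V\vec\varphi$ noted before \Autoref{sksheaf} makes the restriction maps $C^\infty$-linear, and the linearity of $r_V$ shows that the restriction maps of $S$ are affine over those of $S^0$.

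It remains to check the sheaf axioms on an open cover $(U_i)_i$ of $U$. Separation is inherited for free: if $[\bvarphi] \in S^0(U)/{\sim_U}$ restricts to $0$ on each $U_i$, then $[\bvarphi] = 0$ already in $S^1(U)/{\sim_U}$ by \Autoref{sksheaf}, hence in the subset $S^0(U)/{\sim_U}$. For gluing, take compatible classes $[\bvarphi^i] \in S^0(U_i)/{\sim_{U_i}}$ and glue them by \Autoref{sksheaf} to a class $[\bvarphi] \in S^1(U)/{\sim_U}$ with $[\bvarphi]|_{U_i} = [\bvarphi^i]$ for all $i$. The ``if'' direction of \Autoref{lem_testobjrest} now shows that the glued net $\bvarphi$ itself satisfies \ref{def_testobj.1p}, \ref{def_testobj.2} and \ref{def_testobj.3p}, because for each $i$ the representative $\bvarphi^i$ of $[\bvarphi]|_{U_i}$ does; hence $\bvarphi \in S^0(U)$ and the glued class lies in $S^0(U)/{\sim_U}$. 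Repeating this with \ref{def_testobj.1}, \ref{def_testobj.2}, \ref{def_testobj.3} in place of the primed conditions shows that $U \mapsto S(U)/{\sim_U}$ is a subsheaf as well.

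The affine statement follows by combining the above with the fact that $S(U)/{\sim_U}$ is an affine space over $S^0(U)/{\sim_U}$ --- the difference of two test objects being a $0$-test object, and $\sim_U$ respecting addition --- so that the already-established compatibility of restriction and gluing with this structure yields a sheaf of affine spaces. I do not expect a genuine obstacle here: the entire argument is bookkeeping over the inclusions $S^0 \subseteq S^1 \supseteq S$, and its only real content --- that the asymptotic conditions \ref{def_testobj.1}, \ref{def_testobj.1p}, \ref{def_testobj.2}, \ref{def_testobj.3}, \ref{def_testobj.3p} both localize and glue --- is exactly what \Autoref{lem_testobjrest} delivers.
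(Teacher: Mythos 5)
Your proof is correct and follows exactly the route the paper intends: the corollary is stated without proof precisely because it is the immediate combination of \Autoref{sksheaf} with \Autoref{lem_testobjrest}, which is the bookkeeping you carry out. Spelling out the subpresheaf, separation, and gluing steps via the two directions of \Autoref{lem_testobjrest} is exactly the argument left implicit in the paper.
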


\begin{lemma}\label{lem_testobj}
 Let $U,V,W$ be open sets such that $\overline{W} \subseteq U \cap V \ne \emptyset$, and let $\bvarphi \in S(V)$. Then there exists $\bpsi \in S(U)$ such that $[\bpsi]|_W = [\bvarphi]|_W$; an analogous result holds for $S^0$.
\end{lemma}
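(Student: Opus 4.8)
The plan is to reduce the claim for $S$ to the (easier) claim for $S^0$ via the affine structure of test objects, and then to realize the $S^0$-extension by a single smooth cutoff followed by extension by zero, with \Autoref{lem_testobjrest} certifying that the result is again a $0$-test object.

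First I would fix any test object $\btheta\in S(U)$ (such nets exist by the standard mollifier construction) and pass to differences. By \Autoref{cor_testobjsheaf} the restrictions $r_{U\cap V}\bvarphi$ and $r_{U\cap V}\btheta$ are test objects on $U\cap V$ whose values are supported in $U\cap V$, so $\boldsymbol\eta\coleq r_{U\cap V}\bvarphi - r_{U\cap V}\btheta$ lies in $S^0(U\cap V)$. It then suffices to produce some $\boldsymbol\eta'\in S^0(U)$ with $\boldsymbol\eta'\sim_W\boldsymbol\eta$: setting $\bpsi\coleq\btheta+\boldsymbol\eta'\in S(U)$, one gets near $W$ that $\bpsi_\e=\btheta_\e+(r_{U\cap V}\bvarphi)_\e-(r_{U\cap V}\btheta)_\e=\bvarphi_\e$ for small $\e$ (using $r_{U\cap V}\btheta\sim\btheta$ and $r_{U\cap V}\bvarphi\sim\bvarphi$ from \Autoref{skpresheaf}\ref{skpresheaf.5}), whence $\bpsi\sim_W\bvarphi$ and therefore $[\bpsi]|_W=[\bvarphi]|_W$ after restriction to $W$. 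The $S^0$-case is the same construction but starts directly from $\boldsymbol\eta\coleq r_{U\cap V}\bvarphi$ and needs no base point $\btheta$.

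For the extension itself, choose by smooth Urysohn a function $\chi\in C^\infty(U)$ with $0\le\chi\le1$, with $\chi\equiv1$ on a neighborhood of $\overline W$ and $\supp\chi\subseteq U\cap V$; this is possible because $\overline W$ is closed and contained in the open set $U\cap V$. Since $S^0(U\cap V)$ is a $C^\infty(U\cap V)$-module, $\chi\cdot\boldsymbol\eta\in S^0(U\cap V)$. As $\supp\chi$ is closed in $U$ and contained in $U\cap V$, the prescription $\boldsymbol\eta'_\e(x)\coleq\chi(x)\,\boldsymbol\eta_\e(x)$ on $U\cap V$ and $\boldsymbol\eta'_\e(x)\coleq0$ on $U\setminus\supp\chi$ (consistent on the overlap, where $\chi=0$) defines an element $\boldsymbol\eta'\in\SK(U)^I$. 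On the neighborhood of $\overline W$ where $\chi\equiv1$ we have $\boldsymbol\eta'_\e=\boldsymbol\eta_\e$, so $\boldsymbol\eta'\sim_W\boldsymbol\eta$ as required.

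The one point requiring care is verifying $\boldsymbol\eta'\in S^0(U)$, that is, that conditions \ref{def_testobj.1p}, \ref{def_testobj.2}, \ref{def_testobj.3p} and \ref{def_testobj.4} of \Autoref{def_testobj} hold on all of $U$; the asymptotic ones are a priori global statements about seminorms on $\cL(\cD'(U),\cD'(U))$ and $\cL(C^\infty(U),C^\infty(U))$, and it is not obvious that patching two objects respects them. This is where \Autoref{lem_testobjrest} does the decisive work: applied to the open cover $\{U\cap V,\ U\setminus\supp\chi\}$ of $U$, it reduces each such condition to the existence of a representative satisfying it on each piece. On $U\cap V$ the representative $\chi\cdot\boldsymbol\eta\in S^0(U\cap V)$ does so, and on $U\setminus\supp\chi$ the representative is identically $0\in S^0(U\setminus\supp\chi)$; condition \ref{def_testobj.4} is local and is checked directly. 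Thus the genuinely global-looking convergence and boundedness requirements are exactly the obstacle, and \Autoref{lem_testobjrest} is precisely the tool that dissolves it.
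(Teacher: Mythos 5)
Your proof is correct and follows essentially the same route as the paper's: your $\bpsi=\btheta+\chi\cdot(\bvarphi-\btheta)$ is exactly the paper's affine combination $\chi[\bvarphi]|_{U\cap V}+(1-\chi)[\bpsi^0]|_{U\cap V}$ written additively, with the extension to $U$ certified by the localization of the test-object conditions (you invoke \Autoref{lem_testobjrest} on the cover $\{U\cap V,\,U\setminus\supp\chi\}$ where the paper invokes the sheaf gluing of \Autoref{cor_testobjsheaf}, which is built from the same ingredients).
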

\begin{proof}
 Choose an open neighborhood $X$ of $\overline{W}$ such that $\overline{X} \subseteq U \cap V$, any $\bpsi^0 \in S(U)$ and $\chi \in C^\infty(U \cap V)$ with $\supp \chi \subseteq X$ and $\chi \equiv 1$ on $W$. Then $\chi \cdot [\bvarphi]|_{U \cap V} + (1-\chi) [ \bpsi^0 ]|_{U \cap V}$ is an element of $S(U \cap V)$ whose restriction to $(U \cap V) \cap (U \setminus \overline{X})$ equals the restriction of $\bpsi^0$ to this set, hence by \Autoref{cor_testobjsheaf} there exists $\bpsi \in S(U)$ such that
\[
 [\bpsi]|_W = ( \chi \cdot [\bvarphi]|_{U \cap V} + (1 - \chi) \cdot [ \bpsi^0]|_{U \cap V})|_W = [\bvarphi]|_W.\qedhere
\]
\end{proof}

Concerning the study of the sheaf property of the quotient, we first show that moderateness and negligibility localize. Since to this end, we have to consider restrictions $R|_{U_i}$ of $R \in \cE(U)$ to open subsets $U_i$ of $U$, \Autoref{asdfsd} sets the stage for $(\tau_x\bvarphi_\e, x, \e)$-locality to appear. Observe that for the quotient construction of \Autoref{sec_quotient} as such, no locality assumptions were needed.

\begin{theorem}\label{gnaxgu}Let $U \subseteq \Omega$ be open, $(U_i)_i$ an open cover of $U$ and $R \in \basic(U)$ be $(\tau_x\bvarphi_\e,x,\e)$-local. Then $R$ is moderate or negligible if and only if all $R|_{U_i}$ are.
\end{theorem}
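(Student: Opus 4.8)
The plan is to reduce everything to the observation that $(\tau_x\bvarphi_\e,x,\e)$-locality survives passage to higher differentials, so that $\ud^k R(\bvarphi)(\bpsi_1,\dotsc,\bpsi_k)_\e(x)$ depends, for fixed $\e$, only on the germs $\tau_x\bvarphi_\e,\tau_x\bpsi_{1,\e},\dotsc,\tau_x\bpsi_{k,\e}$ at $x$. First I would record this \emph{locality of derivatives}: since $\ud^kR(\bvarphi)(\bpsi_1,\dotsc,\bpsi_k)_\e(x)=\pd_{t_1}\cdots\pd_{t_k}\big|_{t=0}\,R(\bvarphi+\sum_j t_j\bpsi_j)_\e(x)$ and $\tau_x$ is linear, the value $R(\bvarphi+\sum_j t_j\bpsi_j)_\e(x)$ depends only on $\tau_x\bvarphi_\e+\sum_j t_j\tau_x\bpsi_{j,\e}$; hence if two systems of nets have equal germs at $x$ for a given $\e$, the two resulting functions of $t$ coincide near $0$ and so do all their derivatives. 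As the germ equalities I shall produce hold on whole neighborhoods of $x$ for small $\e$, the same reasoning applies to $\pd^\alpha_x$ of these differentials. Note also that since $(\tau_x\bvarphi_\e,x,\e)\succeq(\tau_x\bvarphi,x,\e)$, \Autoref{asdfsd} makes each $R|_{U_i}$ well-defined and again $(\tau_x\bvarphi_\e,x,\e)$-local. Finally, since the seminorms $p_{K,m}(f)=\sup_{x\in K,\,\abso\alpha\le m}\abso{\pd^\alpha f(x)}$ are cofinal in $\csn(C^\infty(\,\cdot\,))$, it suffices to test moderateness and negligibility with these.

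Second, I would isolate the bridge between $R$ and $R|_V$. Differentiating the identity $R|_V(\boldsymbol\chi)_\e(x)=R(\rho\cdot\boldsymbol\chi)_\e(x)$, valid for $\rho\in\cD(V)$ with $\rho\equiv1$ near $x$ as in the proof of \Autoref{asdfsd}, in the directions $\bpsi_1,\dotsc,\bpsi_k$ gives $\ud^kR|_V(\boldsymbol\chi)(\bpsi_1,\dotsc,\bpsi_k)_\e(x)=\ud^kR(\rho\boldsymbol\chi)(\rho\bpsi_1,\dotsc,\rho\bpsi_k)_\e(x)$. Combined with the locality of derivatives, whenever a net on $V$ agrees near $x$ in the sense of $\sim$ (for small $\e$) with some net on $U$, the two corresponding $k$-th differentials coincide on a neighborhood of $x$, hence so do their $x$-derivatives. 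Because $\sim$ only holds ``for $\e<\e_0(x)$'', I would use compactness of the relevant $K$ to pass to a single $\e_0>0$, which is harmless since moderateness and negligibility are asymptotic conditions as $\e\to0$.

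Third, for the implication ``$R$ moderate (resp. negligible) $\Rightarrow$ each $R|_{U_i}$ is'', fix $i$, $k$, a compact $K\subseteq U_i$, an $m$, and test objects $\bvarphi\in S(U_i)$, $\bpsi_1,\dotsc,\bpsi_k\in S^0(U_i)$. Choosing $W$ with $K\subseteq W$ and $\overline W\subseteq U_i$, \Autoref{lem_testobj} produces $\widetilde\bvarphi\in S(U)$ and analogously $\widetilde\bpsi_j\in S^0(U)$ with $\widetilde\bvarphi\sim\bvarphi$ and $\widetilde\bpsi_j\sim\bpsi_j$ on $W$. By the bridge above, with a cutoff $\rho\equiv1$ on a neighborhood of $K$, I obtain $\pd^\alpha_x\ud^kR|_{U_i}(\bvarphi)(\bpsi_1,\dotsc,\bpsi_k)_\e(x)=\pd^\alpha_x\ud^kR(\widetilde\bvarphi)(\widetilde\bpsi_1,\dotsc,\widetilde\bpsi_k)_\e(x)$ for all $x\in K$, $\abso\alpha\le m$ and small $\e$; the global estimate for $R$ then transfers the required $O(\e^{-N})$ (resp. $O(\e^m)$) bound to $R|_{U_i}$.

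Fourth, for the converse, fix $k$, a compact $K\subseteq U$, an $m$, and $\bvarphi\in S(U)$, $\bpsi_j\in S^0(U)$. Take a finite subcover $U_{i_1},\dotsc,U_{i_L}$ of $K$ and a compact shrinking $K=\bigcup_l K_l$ with $K_l\subseteq U_{i_l}$. For each $l$ set $\bvarphi^l\coleq r_{U_{i_l}}\bvarphi$ and $\bpsi_j^l\coleq r_{U_{i_l}}\bpsi_j$, which lie in $S(U_{i_l})$ and $S^0(U_{i_l})$ by \Autoref{skpresheaf} and \Autoref{lem_testobjrest} and satisfy $\bvarphi^l\sim\bvarphi$, $\bpsi_j^l\sim\bpsi_j$ on $U_{i_l}$. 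As before, $\pd^\alpha_x\ud^kR(\bvarphi)(\bpsi_1,\dotsc,\bpsi_k)_\e(x)=\pd^\alpha_x\ud^kR|_{U_{i_l}}(\bvarphi^l)(\bpsi_1^l,\dotsc,\bpsi_k^l)_\e(x)$ for $x\in K_l$, $\abso\alpha\le m$ and small $\e$; moderateness (resp. negligibility) of each $R|_{U_{i_l}}$ gives the bound on $K_l$, and taking the maximum over the finitely many $l$ (the worst exponent $N$ in the moderate case, every $m$ in the negligible case) yields the estimate on $K$. The main obstacle is the locality-of-derivatives statement of the first paragraph together with its uniform (in $x\in K$) version obtained through compactness; once this is in place the two implications are essentially symmetric and reduce to the sheaf-theoretic bookkeeping of test objects furnished by \Autoref{skpresheaf}, \Autoref{lem_testobjrest}, \Autoref{cor_testobjsheaf} and \Autoref{lem_testobj}.
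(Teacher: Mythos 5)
Your proposal is correct and follows essentially the same route as the paper's proof: the forward direction extends test objects from $U_i$ to $U$ via \Autoref{lem_testobj}, and the converse localizes to finitely many $U_i$ covering $K$ and uses the cutoffs $\rho_{U_i}$, with $(\tau_x\bvarphi_\e,x,\e)$-locality guaranteeing that the differentials $\ud^k R(\cdot)(\cdots)_\e$ agree near $x$ for small $\e$. The only difference is one of explicitness: you spell out the locality-of-derivatives argument and the compactness step yielding a uniform $\e_0$, both of which the paper leaves implicit.
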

\begin{proof}
In order to test $R|_{U_i}$ we have to estimate the derivatives of
\[ \ud^k R|_{U_i} ( \bvarphi ) ( \bpsi_1, \dotsc, \bpsi_k)_\e(x) \]
for $k \in \bN_0$, $\bvarphi \in S(U_i)$, $\bpsi_1, \dotsc, \bpsi_k \in S^0(U_i)$ and $x$ in a compact subset $K \subseteq U_i$. By \Autoref{lem_testobj} we can choose $\bvarphi' \in S(U)$, $\bpsi_1', \dotsc, \bpsi_k' \in S^0(U)$ such that on an open neighborhood of $K$ and for small $\e$ we have $\bvarphi_\e = \bvarphi_\e'$, $\bpsi_{1,\e} = \bpsi'_{1,\e}, \dotsc, \bpsi_{k,\e} = \bpsi'_{k,\e}$. Hence, the expression to be estimated equals $\ud^k R(\bvarphi')(\bpsi_1', \dotsc, \bpsi_k')_\e(x)$ on this neighborhood for small $\e$, so moderateness and negligibility of $R|_{U_i}$ are implied by the same property of $R$.

Conversely, in order to estimate the derivatives of $\ud^k R(\bvarphi)(\bpsi_1, \dotsc, \bpsi_k)_\e(x)$ for $k \in \bN_0$, $\bvarphi \in S(U)$, $\bpsi_1, \dotsc, \bpsi_k \in S^0(U)$ and $x$ in a compact subset $K \subseteq U$, we can assume without limitation of generality that $K \subseteq U_i$ for some $i$. Then, the expression to be estimated equals $\ud^k R|_{U_i} ( \rho_{U_i} \bvarphi ) ( \rho_{U_i} \bpsi_1, \dotsc, \rho_{U_i} \bpsi_k )_\e(x)$ (where the $\rho_{U_i}$ are as in \Autoref{rhorest}) for $x$ in an open neighborhood of $K$ and small $\e$, whence moderateness or negligibility of $R$ follows if all $R|_{U_i}$ have that property.
\end{proof}

Consequently, restriction is well-defined on the quotient if the locality type $\ell$ is strong enough.

\begin{theorem}\label{thm_sheaf}Let $\ell$ be a locality type with $\ell \succeq (\tau_x\bvarphi_\e, x, \e)$. Then $\cG^\ell$ is a sheaf of algebras on $\Omega$.
\end{theorem}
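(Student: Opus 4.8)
The goal is to upgrade the presheaf $\cG^\ell$ to a sheaf, so I would verify in turn: (a) that restriction is well defined on the quotient and turns $\cG^\ell$ into a presheaf of algebras, (b) the separation (monopresheaf) axiom, and (c) the gluing axiom. Throughout, the hypothesis $\ell \succeq (\tau_x\bvarphi_\e, x, \e)$ is exploited via two earlier results at once: since $(\tau_x\bvarphi_\e, x, \e) \succeq (\tau_x\bvarphi, x, \e)$, \Autoref{asdfsd} supplies a linear restriction operator $R \mapsto R|_V$ on $\basic[\ell](U)$ which is transitive and preserves $\ell$, while every $R \in \basic[\ell](U)$ is in particular $(\tau_x\bvarphi_\e, x, \e)$-local, so that \Autoref{gnaxgu} is applicable. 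For (a), I would first note that $R \mapsto R|_V$ is multiplicative and unital (by the uniqueness in \Autoref{asdfsd} and the pointwise definition of the product), hence an algebra homomorphism; the forward implication in the proof of \Autoref{gnaxgu}—which for a single open $V \subseteq U$ uses only \Autoref{lem_testobj}—shows it carries $\moderate[\ell](U)$ into $\moderate[\ell](V)$ and $\negligible[\ell](U)$ into $\negligible[\ell](V)$, so it descends to an algebra homomorphism $\cG^\ell(U) \to \cG^\ell(V)$, and transitivity makes $\cG^\ell$ a presheaf of algebras.

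For (b), let $(U_i)_i$ cover $U$ and let $s \in \cG^\ell(U)$ with representative $R \in \moderate[\ell](U)$ satisfy $s|_{U_i} = 0$, i.e.\ $R|_{U_i} \in \negligible[\ell](U_i)$ for all $i$. As $R$ is $(\tau_x\bvarphi_\e, x, \e)$-local, the converse implication of \Autoref{gnaxgu} gives $R \in \negligible[\ell](U)$, whence $s = 0$; this is exactly the separation axiom.

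The core of the proof is the gluing axiom (c). Given compatible sections $s_i \in \cG^\ell(U_i)$, I would fix representatives $R_i \in \moderate[\ell](U_i)$, a partition of unity $(\chi_i)_i$ subordinate to $(U_i)_i$, and the cut-off restriction maps $r_{U_i} \colon \SK(U)^I \to \SK(U_i)^I$ of \Autoref{skpresheaf} (applied componentwise), and set
\[ R(\bvarphi)_\e(x) \coleq \sum_i \chi_i(x)\, R_i(r_{U_i}\bvarphi)_\e(x), \]
each summand extended by zero off $\supp \chi_i$. Local finiteness and the smoothness of $r_{U_i}$ give $R \in \basic(U)$, and, since $\ud^k$ commutes with the locally finite sum while $r_{U_i}$ maps uniform sets of test objects and of $0$-test objects on $U$ to such sets on $U_i$, the moderateness of the $R_i$ yields $R \in \moderate(U)$. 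The decisive verification is that $R|_{U_j} - R_j$ is negligible for each $j$: fixing $x_0 \in U_j$, a neighbourhood $W$ on which $\sum_{i \in J}\chi_i \equiv 1$ for a finite set $J$ with $x_0 \in \bigcap_{i \in J} U_i$, and using \Autoref{skpresheaf}\,\ref{skpresheaf.5} (so that $r_{U_i}\bvarphi \sim \bvarphi$ and the cut-offs therefore agree with $\bvarphi$ as germs for small $\e$) together with the defining property of the restriction operator, one rewrites, for $x$ near $x_0$ and small $\e$,
\[ R(\bvarphi)_\e(x) - R_j(r_{U_j}\bvarphi)_\e(x) = \sum_{i \in J} \chi_i(x)\bigl[(R_i|_{U_i \cap U_j}) - (R_j|_{U_i \cap U_j})\bigr](r_{U_i \cap U_j}\bvarphi)_\e(x), \]
a finite combination of differences that are negligible by the compatibility $s_i|_{U_i \cap U_j} = s_j|_{U_i \cap U_j}$; invoking \Autoref{thm123} reduces the negligibility test of the moderate difference $R|_{U_j} - R_j$ to exactly such values. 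Finally, $R$ must be shown to be genuinely $\ell$-local: when $\ell_x = x$ (equivalently, $\basic[\ell](U)$ is a $C^\infty(U)$-module) this is immediate, because once the hypothesis $\ell(\bvarphi,x,\e)=\ell(\bpsi,y,\eta)$ forces $x=y$, both multiplication by $\chi_i$ and composition with $r_{U_i}$ preserve $\ell$-locality.

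I expect the main obstacle to be precisely this last point in the cases $\ell_x = \star$ (the point-independent types such as $o = \bvarphi_\e(x)$), where multiplication by $\chi_i(x)$ destroys $\ell$-locality and the naive glued map above only lies in the larger space obtained by adjoining the $x$-component. Here the construction has to be refined so as to return a genuinely $\ell$-local representative; the key tool is the shrinking-support property \Autoref{def_testobj}\,\ref{def_testobj.4}, which guarantees that on any compact set the relevant test objects $\bvarphi_\e(x)$ eventually have support inside a single $U_i$, so that the asymptotic (negligibility-level) behaviour of a carefully assembled global representative is dictated by the $R_i$ alone while its $\ell$-locality is retained. Reconciling local assembly with global $\ell$-locality in these non-module cases is the crux of the gluing step.
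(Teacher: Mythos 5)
Your treatment of restriction, separation, and the gluing step for locality types with $\ell_x = x$ matches the paper's argument in all essentials: the same partition-of-unity ansatz $R(\bvarphi)_\e(x) = \sum_i \chi_i(x)\, R_i(\rho_i\bvarphi)_\e(x)$ with the cut-offs of \Autoref{rhorest}, the same reduction of moderateness to finitely many summands, and the same use of \Autoref{thm123}, \Autoref{lem_testobj} and the compatibility hypothesis to verify negligibility of $R|_{U_j} - R_j$. Up to that point the proposal is sound.

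The gap is exactly where you stop. The case $\ell_x = \star$ --- i.e.\ the types $(\bvarphi_\e(x),\e)$, $\bvarphi_\e(x)$, $\e$ and $\star$, which in particular covers Colombeau's original algebra $\cG^o$ --- is not a marginal complication but a case the theorem asserts and your argument does not establish. You correctly diagnose that multiplication by $\chi_i(x)$ destroys $\ell$-locality and you gesture at the shrinking-support condition \Autoref{def_testobj}\,\ref{def_testobj.4}, but ``the construction has to be refined'' is a statement of the problem, not a proof. The paper's resolution is a concrete device you would need to supply: introduce the moment map $M(\varphi) \coleq \int y\,\varphi(y)\,\ud y$ and glue by
\[ R(\bvarphi)_\e(x) \coleq \sum_\alpha \chi_\alpha\bigl(M(\bvarphi_\e(x))\bigr)\cdot R_{i(\alpha)}\bigl(q_\alpha\bvarphi_\e(x)\bigr)_\e(x_{i(\alpha)}), \]
with cut-offs $q_\alpha \in \cD(U_{i(\alpha)})$ equal to $1$ near $\supp\chi_\alpha$ and arbitrary fixed points $x_{i(\alpha)} \in U_{i(\alpha)}$. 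Every ingredient here is a function of $\bvarphi_\e(x)$ (and $\e$) alone, so $\ell$-locality is preserved by construction, while the estimate $\sup_{x\in K}\abso{M(\bvarphi_\e(x)) - x} \to 0$ guarantees that asymptotically the partition of unity is evaluated ``at $x$'' and the correct local representative is selected; the negligibility of $R|_{U_j}-R_j$ then again reduces to the compatibility hypothesis. Without this (or an equivalent mechanism for re-encoding the location $x$ inside the datum $\bvarphi_\e(x)$ itself), the gluing axiom remains unproved for the point-independent types.
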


Explicitly, the condition $\ell \succeq (\tau_x\bvarphi_\e, x, \e)$ holds for all the locality types $(\tau_x\bvarphi_\e, x, \e)$, $(\bvarphi_\e(x), x, \e)$, $(\tau_x\bvarphi_\e, x)$, $(\bvarphi_\e(x), x) = d $, $(\bvarphi_\e(x), \e)$, $\bvarphi_\e(x) = 0$, $(x,\e) = s$, $x$, $\e$ and $\star$.

\begin{proof}
Let $U \subseteq \Omega$ be open and $(U_i)_i$ a covering of $U$ by open sets. That an element of $\cG^\ell(U)$ is uniquely determined by its restrictions to all subsets $U_i$ is contained in the statement of \Autoref{gnaxgu}. So, suppose that we are given functions $R_i \in \moderate^\ell (U_i)$ such that $R_i|_{U_i \cap U_j} - R_j|_{U_i \cap U_j} \in \negligible^\ell ( U_i \cap U_j )$ for all $i,j$ with $U_i \cap U_j \ne \emptyset$.

We first consider the case where $\ell_x = x$. Choose a partition of unity $(\chi_i)_i$ on $U$ subordinate to $(U_i)_i$ and for each $i$ a function $\rho_i \in C^\infty(U_i, \cD(U_i))$ as in \Autoref{rhorest} which is equal to 1 on a neighborhood of the diagonal in $U_i \times U_i$. We define the mapping $R \colon \SK(U)^I \to C^\infty(U)^I$ by
\begin{equation}\label{sheafsum}
R(\bvarphi)_\e(x) \coleq \sum_i \chi_i ( x) \cdot R_i ( \rho_i \bvarphi ) _\e (x).
\end{equation}

Clearly $R$ is smooth and $(\tau_x\bvarphi_\e,x,\e)$-local. For moderateness of $R$ we have to estimate derivatives with respect to $x$ of $\ud^k R(\bvarphi)(\bpsi_1, \dotsc, \bpsi_k)_\e(x)$ for $k \in \bN_0$, $\bvarphi \in S(U)$, $\bpsi_1, \dots, \bpsi_k \in S^0(U)$ and $x$ in a compact subset $K \subseteq U$.

For any relatively compact neighborhood of $K$ whose closure is contained in $U$ there exists a finite index set such that for $x$ in this neighborhood, the sum in \eqref{sheafsum} only has to be taken for $i$ in this index set. By the Leibniz rule it then suffices to estimate derivatives of $\ud^k R_i (\rho_i \bvarphi ) ( \rho_i \bpsi_1, \dotsc, \rho_i \bpsi_k )_\e (x)$ for $x$ in $\supp \chi_i \cap K$. But this expression has moderate growth by assumption and \Autoref{cor_testobjsheaf}.

Next, we show that $R|_{U_j} - R_j$ is negligible for all $j$. Fix $\bvarphi \in S(U_j)$ for testing and a compact set $K$ in $U_j$. Using \Autoref{lem_testobj} we choose $\bpsi \in S(U)$ such that $\bvarphi_\e = \bpsi_\e$ on a neighborhood of $K$ for small $\e$. As above, there is a finite index set such that for $x \in K$ the sum in 
\[ (R|_{U_j} - R_j)(\bvarphi)_\e(x) = \sum_i \chi_i(x) \cdot ( R_i ( \rho_i\bpsi) - R_j(\bvarphi))_\e(x) \]
runs only over this index set. Hence, by \Autoref{thm123} it suffices to estimate $R_i ( \rho_i \bpsi)_\e(x) - R_j(\bvarphi)_\e(x)$ for $x$ in the compact subset $\supp \chi_i \cap K \subseteq U_i \cap U_j$. Let $\rho \in C^\infty(U_i \cap U_j, \cD(U_i \cap U_j))$ be equal to 1 on a neighborhood of the diagonal in $(U_i \cap U_j) \times (U_i \cap U_j)$. Then for such $x$ and small $\e$,
\begin{align*}
 R_i(\rho_i \bpsi)_\e(x) - R_j(\bvarphi)_\e(x) &= R_i|_{U_i \cap U_j} ( \rho \rho_i \bpsi)_\e(x) - R_j|_{U_i \cap U_j} ( \rho \bvarphi)_\e(x) \\
 &= R_i|_{U_i \cap U_j} ( \rho \bpsi)_\e(x) - R_j|_{U_i \cap U_j} ( \rho \bpsi)_\e (x)
\end{align*}
and the claim follows by assumption.

Now $R$ has the same locality type as the $R_i$ because $\ell(\bvarphi, x, \e) = \ell ( \bpsi, y, \eta)$ (with $\ell_x = x$) implies $\ell ( \rho_i\bvarphi, x, \e) = \ell ( \rho_i \bpsi, y, \eta)$ for $x \in \supp \chi_i$.

Let us turn to the case where $\ell_x = \star$ (which means that $\ell_\bvarphi$ is either $\bvarphi_\e(x)$ or $\star$). Now we choose a partition of unity $(\chi_\alpha)_\alpha$ on $U$ such that each $\chi_\alpha$ has compact support contained in some $U_{i(\alpha)}$, and for each $\alpha$ we choose a function $q_\alpha \in \cD(U_{i(\alpha)})$ such that $q_\alpha \equiv 1$ on a neighborhood of $\supp \chi_\alpha$. Let $M \in \cL ( \cD(\bR^n), \bR^n )$ be the mapping defined by the vector-valued integral
\[ M (\varphi) \coleq \int y \cdot \varphi(y)\,\ud y \qquad (\varphi \in \cD(\bR^n)). \]

We define $R \colon \SK(U)^I \to C^\infty(U)^I$ by
\begin{equation}\label{sheafsum2}
R(\bvarphi)_\e(x) \coleq \sum_\alpha \chi_\alpha ( M ( \bvarphi_\e(x) )) \cdot R_{i(\alpha)} ( q_\alpha \bvarphi_\e(x) )_\e ( x_{i(\alpha)} ).
\end{equation}
where the $x_i \in U_i$ are arbitrary but fixed points. Clearly, $R$ is smooth and $\ell$-local. In order to show that $R$ is moderate we have to consider $x$-derivatives of the mapping
\begin{equation}\label{blubber}
\ud^k R(\bvarphi)(\bpsi_1, \dotsc, \bpsi_k)_\e(x)
\end{equation}
as above, with $x$ in a compact subset $K$ of $U$. For this we first note that the expression
\[ M(\bvarphi_\e(x) + t_1 \bpsi_{1,\e}(x) + \dotsc + t_k \bpsi_{k,\e}(x) ) \]
is uniformly bounded for $\e$ small, $x$ in a compact set and $t_1, \dotsc, t_k$ in a bounded neighborhood of $0 \in \bR$. Hence, there is a finite index set $F$ such that
\[
 R(\bvarphi')_\e(x) = \sum_{\alpha \in F} \chi_\alpha ( M (\bvarphi'_\e(x) )) \cdot R_{i(\alpha)} ( q_\alpha \bvarphi'_\e(x) )_\e ( x_{i(\alpha)})
\]
for $\bvarphi' = \bvarphi + t_1 \bpsi_1 + \dotsc + t_k \bpsi_k$, $x$ in a neighborhood of $K$, small $\e$ and $t_1, \dotsc, t_k$ close to $0$. Because
\[ \ud^k R ( \bvarphi ) ( \bpsi_1, \dotsc, \bpsi_k ) = \left.\frac{\pd}{\pd t_1}\right|_{t_1=0} \dotsm \left.\frac{\pd}{\pd t_k}\right|_{t_k=0} R ( \bvarphi + t_1 \bpsi_1 + \dotsc + t_k \bpsi_k) \]
this implies that \eqref{blubber} is given by finitely many products of terms of the form
\begin{align}
 \ud^k ( \bvarphi' & \mapsto \chi_\alpha ( M ( \bvarphi'_\e(x) ) ) ) (\bvarphi)(\bpsi_1, \dotsc, \bpsi_k) \label{blubber1} \intertext{and}
\ud^k ( \bvarphi' &\mapsto R_{i(\alpha)} ( q_\alpha \bvarphi'_\e(x) )_\e( x_{i(\alpha)} ) )(\bvarphi)(\bpsi_1, \dotsc, \bpsi_k). \label{blubber2}
\end{align}
for some new choices of $k$ and $\bpsi_1, \dotsc, \bpsi_k$. The terms of the form \eqref{blubber1} can simply be estimated by a constant which is independent of $\e$, as is easily verified. The terms of the form \eqref{blubber2} are given by
\begin{equation}
\ud^k R_{i(\alpha)} ( q_\alpha \bvarphi_\e(x)) ( q_\alpha \bpsi_{1,\e}(x), \dotsc, q_\alpha \bpsi_{k,\e}(x) )_\e ( x_{i(\alpha)} ). \label{sturm}
\end{equation}
Because
\[ \sup_{x \in K} \abso{ M(\bvarphi_\e(x))-x} \to 0 \qquad \textrm{for }\e \to 0 \]
it suffices to estimate the partial derivatives of \eqref{sturm} for $x$ in a compact neighborhood $L$ of $\supp \chi_\alpha \cap K$ such that $L \subseteq U_{i(\alpha)}$ and $q_\alpha \equiv 1$ on a neighborhood of $L$. Choose $\widetilde \bvarphi \in S(U_{i(\alpha)})$ and $\widetilde \bpsi_1, \dotsc, \widetilde \bpsi_k \in S^0(U_{i(\alpha)})$ such that $\widetilde \bvarphi_\e = q_\alpha \bvarphi_\e$, $\widetilde \bpsi_{1,\e} = q_\alpha \bpsi_{1,\e}, \dotsc, \widetilde \bpsi_{k,\e} = q_\alpha \bpsi_{k,\e}$ for small $\e$ in a neighborhood of $L$. Then, expression \eqref{sturm} is given by
\[
 \ud^k R_{i(\alpha)} ( \widetilde \bvarphi_\e(x)) ( \widetilde \bpsi_{1,\e}(x), \dotsc, \widetilde \bpsi_{k,\e}(x))_\e ( x ) = \ud^k R_{i(\alpha)} ( \widetilde \bvarphi ) ( \widetilde \bpsi_1, \dotsc, \widetilde \bpsi_k )_\e(x)
\]
from which moderateness follows.

For negligibility of $R|_{U_j} - R_j$ we proceed as before and fix $\bvarphi \in S(U_j)$ and a compact subset $K$ of $U_j$ for testing. Take $\bpsi \in S(U)$ such that $\bpsi_\e = \bvarphi_\e$ on a relatively compact neighborhood of $K$ for small $\e$. There is a finite index set such that for $x \in K$ the sum giving $(R|_{U_j} - R_j)(\bvarphi)_\e(x)$, i.e.,
\[  \sum_\alpha \chi_\alpha ( M ( \bpsi_\e(x))) \cdot ( R_{i(\alpha)} ( q_\alpha \bpsi_\e(x) ) - R_j(\bvarphi_\e(x)))_\e(  x_{i(\alpha)} ), \]
only has to be taken for $\alpha$ in this index set. It again suffices to estimate $(R_{i(\alpha)} ( q_\alpha \bpsi_\e(x)) - R_j(\bvarphi_\e(x)))_\e( x_{i(\alpha)} )$ for $x$ in compact neighborhood $L$ of $K \cap \supp \chi_\alpha$ with $L \subseteq U_{i(\alpha)} \cap U_j$, where $L$ can be chosen such that $\bpsi_\e = \bvarphi_\e$ on $L$ and $q_\alpha \equiv 1$ on a neighborhood of $L$. In this case, we have 
\begin{gather*}
(R_{i(\alpha)}(q_\alpha \bpsi_\e(x)) - R_j(\bvarphi_\e(x)))_\e(x) \\
= R_{i(\alpha)}|_{U_{i(\alpha)} \cap U_j} ( \rho \bpsi)_\e(x) - R_j|_{U_{i(\alpha)} \cap U_j} ( \rho \bpsi)_\e (x)
\end{gather*}
with $\rho \in C^\infty(U_{i(\alpha)} \cap U_j, \cD(U_{i(\alpha)} \cap U_j))$ as above, and this expression satisfies the negligibility estimates by assumption.

It is clear that also in this case $R$ has the same locality type as the $R_i$.
\end{proof}

\section{Point values}\label{sec_pv}

Although a concept of point values for Schwartz distributions was introduced by S.~\L owasiewicz in \cite{Lojasiewicz}, not every distribution needs to have a point value at every point and distributions are not uniquely determined by their point values in this sense. To the contrary, in Colombeau algebras there in fact is a concept of point values allowing for such a characterization. This concept was first introduced for the special algebra $\cG^s$ and its tempered variant \cite{ObeChar} and later extended to the full Colombeau algebras $\cG^e$ \cite{GKOS} and $\cG^d$ \cite{punktwerte}. In fact, point values can be defined in most other variants of Colombeau algebras, too (see, for example, \cite{zbMATH06289205,Eberhard,zbMATH05819591}).

In this section we are going to consider point values in the framework of the basic space $\basic(\Omega)$ in conjunction with the locality conditions of \Autoref{sec_basic}.

Beginning with a heuristic discussion, the representations of the basic spaces given by \Autoref{basiciso} at first sight suggest to define point evaluation mappings
\begin{align}
 C^\infty(\cF_1(\Omega), C^\infty(\Omega)^I) \times C^\infty(\cF_1(\Omega), \Omega^I) & \to C^\infty(\cF_1(\Omega), \bC^I) \label{eins} \\
C^\infty(\cF_1(\Omega), C^\infty(\Omega)) \times C^\infty(\cF_1(\Omega), \Omega) & \to C^\infty(\cF_1(\Omega), \bC)\label{zwei}
\end{align}
in the obvious way by componentwise application of the canonical evaluation mappings $C^\infty(\Omega)^I \times \Omega^I \to \bC^I$ and $C^\infty(\Omega) \times \Omega \to \bC$, respectively.

However, this turns out to be problematic at least in the case of the algebra $\cG^d(\Omega) = \cG[ ( \bvarphi_\e(x), x)](\Omega)$ (cf.~\cite{punktwerte} for more details). In fact, defining the point value $R(X) \in C^\infty(\cD(\Omega))$ of $R \in \cE^d(\Omega) \cong C^\infty(\cD(\Omega), C^\infty(\Omega))$ at the generalized point $X \in C^\infty(\cD(\Omega), \Omega)$ by
\begin{equation}\label{mai}
R(X)(\varphi) \coleq R(\varphi)(X(\varphi))\qquad (\varphi \in \cD(\Omega))
\end{equation}
as suggested by \eqref{zwei}, one quickly sees that this is not well-defined on the quotient. In fact, by the definition of negligibility $R$ is determined by its values $R(\bvarphi_\e(x))(x)$ for test objects $\bvarphi \in S(\Omega)$ and $x \in \Omega$, but with \eqref{mai} $R(X)$ would need to be determined by the values $R ( \bvarphi_\e(x)) ( X (\bvarphi_\e(x)))$ of which we have no information because $x \ne X(\bvarphi_\e(x))$. A workaround to this problem in the variant of $\cG^d$ of \cite{found} consists in adding the $x$-variable to representatives of generalized points and numbers and using the translation operator $T_x \colon \varphi \mapsto \varphi(.-x)$ on test functions in order to define $R(X)(\varphi,x) \coleq R ( T_{X(\varphi,x) - x} \varphi) ( X(\varphi, x))$. One succeeds in giving a point value characterization for $\cG^d$ that way \cite[Theorem 5.8]{punktwerte}. However, both adding the $x$-variable for mere technical reasons and the use of translation -- which is not available on manifolds -- point to some structural shortcomings underlying that approach.

The reasons for all these difficulties become transparent from the vantage point of $\basic(\Omega)$, i.e., for $\cF_1(\Omega) = \SK(\Omega)^I$ in \eqref{eins}, where no locality condition on $\basic(\Omega)$ is assumed and point evaluation takes the following form:
\begin{gather*}
 C^\infty(\SK(\Omega)^I, C^\infty(\Omega)^I) \times C^\infty(\SK(\Omega)^I, \Omega^I) \to C^\infty(\SK(\Omega)^I, \bC^I), \\
(R, X) \mapsto R(X),\ R(X)(\bvarphi)_\e \coleq R(\bvarphi)_\e ( X(\bvarphi)_\e).
\end{gather*}
It is a crucial fact that on the corresponding basic spaces $C^\infty(\SK(\Omega)^I, \Omega^I)$ of generalized points and $C^\infty(\SK(\Omega)^I, \bC^I)$ of generalized numbers one can introduce locality conditions analogue to those of \Autoref{def_loc} only via mappings defined on $\SK(\Omega)^I \times I$. This means that even if the elements of $\basic(\Omega)$ which are for example $(\bvarphi_\e(x), x)$-local can be represented as elements of $C^\infty ( \cD(\Omega), C^\infty(\Omega))$, there is no locality condition on the corresponding space of generalized points such that it reduces to $C^\infty(\cD(\Omega))$; this reasoning invalidates the approach given by \eqref{eins} and \eqref{zwei}. Instead, the generalized points at which elements of $C^\infty(\cD(\Omega), C^\infty(\Omega))$ should be evaluated are given by $X \in C^\infty(\SK(\Omega), \Omega)$, with $R(X)(\vec\varphi) \coleq R ( \vec\varphi(X(\vec\varphi))) ( X(\vec\varphi))$ being the correct form of point evaluation in $\cG^d$. In hindsight, the construction of \cite{punktwerte} only works because of the special form of the test objects used for $\cG^d$ in \cite{found} but cannot be transferred to manifolds directly -- our construction below, however, will also work for the manifold setting.

Summing up, while for generalized functions one can define locality mappings on $(\bvarphi, x, \e)$ as in \Autoref{def_loc}, for generalized points and numbers we can only define them on $(\bvarphi, \e)$, which means that not the same simplifications of the basic space are possible for them.

After these preliminary considerations we now give the details. First, we have generalized numbers:

\begin{definition}\label{def_gennumb}
 We define
 \begin{align*}
  \bC_M & \coleq \{\, X \in C^\infty ( \SK(\Omega)^I, \bC^I ) : \forall k \in \bN_0\ \forall \bvarphi \in S(\Omega)\\
&\qquad \forall \bpsi_1, \dotsc, \bpsi_k \in S^0(\Omega)\ \exists N \in \bN: \abso{ \ud^k X (\bvarphi)(\bpsi_1, \dotsc, \bpsi_k)_\e } = O(\e^{-N}) \, \}, \\
  \bC_\cN & \coleq \{\, X \in C^\infty ( \SK(\Omega)^I, \bC^I ) : \forall k \in \bN_0\ \forall \bvarphi \in S(\Omega)\\
&\qquad \forall \bpsi_1, \dotsc, \bpsi_k \in S^0(\Omega)\ \forall m \in \bN: \abso{ \ud^k X (\bvarphi)(\bpsi_1, \dotsc, \bpsi_k)_\e } = O(\e^m)\, \}, \\
  \widetilde \bC & \coleq \bC_M / \bC_\cN.
\end{align*}
Elements of $\widetilde \bC$ are called \emph{generalized numbers}.
\end{definition}

Note that the space of generalized numbers depends on $\Omega$ -- while this might seem slightly disconcerting at first sight it is, in fact, completely natural if one regards generalized functions as regularized distributions where the regularization procedure depends on $\Omega$; evaluating a regularized distribution at a point hence also has to incorporate this dependence in some way.

Next comes the definition of generalized points.
\begin{definition}\label{def_genpoint}
Let $\Omega$ be an open subset of $\bR^n$.
 \begin{enumerate}[label=(\roman*)]
\item \label{def_genpoint.1} By $\Omega_M$ we denote the set of all $X \in C^\infty(\SK(\Omega)^I, \Omega^I)$ such that $\forall k \in \bN_0$ $\forall \bvarphi \in S(\Omega)$, $\bpsi_1, \dotsc, \bpsi_k \in S^0(\Omega)$ $\exists N \in \bN$:
  \[ \norm{ \ud^k X(\bvarphi)(\bpsi_1, \dotsc, \bpsi_k)_\e } = O(\e^{-N}). \]
\item We introduce an equivalence relation on $\Omega_M$ by writing $X \sim Y$ if $\forall k \in \bN_0$ $\forall \bvarphi \in S(\Omega)$, $\bpsi_1, \dotsc, \bpsi_k \in S^0(\Omega)$ $\forall m \in \bN$:
  \[ \norm{ \ud^k (X-Y)(\bvarphi)(\bpsi_1, \dotsc, \bpsi_k)_\e } = O(\e^m). \]
\item \label{def_genpoint.3}We set $\widetilde\Omega \coleq \Omega_M /{\sim}$ and call its elements generalized points of $\Omega$. The set of compactly supported generalized points is the set of all $\widetilde X \in \widetilde\Omega$ which have a representative $X$ such that
\[ \exists K \subseteq \Omega\textrm{ compact}\ \forall \bvarphi \in S(\Omega)\ \exists \e_0>0\ \forall \e<\e_0: X(\bvarphi)_\e \in K, \]
and is denoted by $\widetilde \Omega_c$.
\end{enumerate}
\end{definition}

If $X \in \widetilde \Omega$ has one representative satisfying the property of \Autoref{def_genpoint} \ref{def_genpoint.3}, then it holds for every representative. Next, we define evaluation at generalized points.

\begin{definition}Let $R \in \basic(\Omega)$ and $X \in C^\infty(\SK(\Omega)^I, \Omega^I)$. The point value of $R$ at $X$ is defined as the element $R(X)$ of $C^\infty(\SK(\Omega)^I, \bC^I)$ given by
 \[ R(X)(\bvarphi)_\e \coleq R( \bvarphi)_\e (X(\bvarphi)_\e). \]
\end{definition}

Clearly, one can introduce locality conditions for generalized points and numbers analogous to the case of the basic space $\basic(\Omega)$. The details of this are completely parallel to those of \Autoref{sec_basic}. Point evaluation preserves locality as follows:

\begin{proposition}
 Let $R \in \basic(\Omega)$ be $\ell$-local for some locality type $\ell$ with $\ell_\bvarphi \in \{ \bvarphi, \bvarphi_\e, \star\}$ and $X \in C^\infty(\SK(\Omega)^I, \Omega^I)$.
 \begin{enumerate}[label=(\roman*)]
  \item If $\ell_x = \star$ then $R(X)$ is $\ell$-local.
  \item If $\ell_x = x$ and $X$ is $(\ell_\bvarphi, \ell_\e)$-local then $R(X)$ is $(\ell_\bvarphi, \ell_\e)$-local.
 \end{enumerate}
\end{proposition}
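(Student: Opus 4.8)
The plan is to unwind the definitions of locality on the source space $\basic(\Omega)$ and on the target space $C^\infty(\SK(\Omega)^I, \bC^I)$ of generalized numbers, and to check the factorization condition by hand. Recall that, as noted just before the proposition, locality for generalized points and numbers is defined exactly as in \Autoref{def_loc} but through maps on $\SK(\Omega)^I \times I$ only; hence a locality type for them carries a $\bvarphi$- and an $\e$-component but no $x$-component, and $(\ell_\bvarphi, \ell_\e)$-locality of such an object $Y$ means that $\ell_\bvarphi(\bvarphi) = \ell_\bvarphi(\bpsi)$ together with $\ell_\e(\e) = \ell_\e(\eta)$ forces $Y(\bvarphi)_\e = Y(\bpsi)_\eta$. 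In both parts I would fix $\bvarphi, \bpsi \in \SK(\Omega)^I$ and $\e, \eta \in I$ with $\ell_\bvarphi(\bvarphi) = \ell_\bvarphi(\bpsi)$ and $\ell_\e(\e) = \ell_\e(\eta)$, and aim to conclude $R(\bvarphi)_\e(X(\bvarphi)_\e) = R(\bpsi)_\eta(X(\bpsi)_\eta)$.

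For part (i), where $\ell_x = \star$, the key observation is that for $\ell_\bvarphi \in \{\bvarphi, \bvarphi_\e, \star\}$ the $\bvarphi$-component never depends on the point variable, so with $\ell_x = \star$ the full map $\ell$ ignores its middle slot entirely. I would therefore set $x \coleq X(\bvarphi)_\e$ and $y \coleq X(\bpsi)_\eta$, observe that $\ell(\bvarphi, x, \e) = \ell(\bpsi, y, \eta)$ holds automatically, and invoke $\ell$-locality of $R$ to get $R(\bvarphi)_\e(x) = R(\bpsi)_\eta(y)$, which is exactly the desired equality.

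For part (ii), where $\ell_x = x$ and $X$ is assumed $(\ell_\bvarphi, \ell_\e)$-local, I would use locality of $X$ first: the hypotheses $\ell_\bvarphi(\bvarphi) = \ell_\bvarphi(\bpsi)$ and $\ell_\e(\e) = \ell_\e(\eta)$ yield a common evaluation point $x \coleq X(\bvarphi)_\e = X(\bpsi)_\eta$. Filling this common $x$ into the now active middle slot, all three components of $\ell(\bvarphi, x, \e)$ and $\ell(\bpsi, x, \eta)$ agree, so $\ell$-locality of $R$ gives $R(\bvarphi)_\e(x) = R(\bpsi)_\eta(x)$, which is $R(X)(\bvarphi)_\e = R(X)(\bpsi)_\eta$.

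I expect no genuine technical obstacle here; the only point requiring care is the bookkeeping of matching the two-component locality types of generalized points and numbers against the three-component types on $\basic(\Omega)$. The conceptual heart is that the agreement of evaluation points, which is what lets the $x$-slot of $R$ be exploited, arises for two different reasons in the two cases: in (i) it is irrelevant because the $x$-slot is suppressed, whereas in (ii) it is supplied precisely by the locality of $X$ --- which is why that hypothesis cannot be dropped there.
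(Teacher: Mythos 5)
Your proposal is correct and follows essentially the same route as the paper: fix $(\bvarphi,\e)$ and $(\bpsi,\eta)$ with matching $(\ell_\bvarphi,\ell_\e)$-data and verify $\ell(\bvarphi, X(\bvarphi)_\e, \e) = \ell(\bpsi, X(\bpsi)_\eta, \eta)$, which holds in case (i) because the $x$-slot is suppressed and $\ell_\bvarphi \in \{\bvarphi,\bvarphi_\e,\star\}$ does not involve the evaluation point, and in case (ii) because locality of $X$ forces the evaluation points to coincide. The paper's proof is a more compressed version of exactly this argument.
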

\begin{proof}
 Suppose we are given pairs $(\bvarphi, \e)$ and $(\bpsi, \eta)$ such that $\ell_\bvarphi(\bvarphi, \e) = \ell_\bvarphi ( \bpsi, \eta)$ and $\ell_\e(\bvarphi, \e) = \ell_\e ( \bpsi, \eta)$. To show that $R(X)(\bvarphi)_\e = R(X)(\bpsi)_\eta$, i.e.,
 \[ R(\bvarphi)_\e(X(\bvarphi)_\e) = R(\bpsi)_\eta ( X(\bpsi)_\eta), \]
 we need to verify that
 \[ \ell ( \bvarphi, X(\bvarphi)_\e, \e) = \ell ( \bpsi, X(\bpsi)_\eta, \eta). \]
 But this follows immediately from the assumptions in both cases.
\end{proof}

Moreover, if one knows two $(\bvarphi_\e, \e)$-local generalized numbers or points to be moderate their equivalence can be tested for without resorting to derivatives (cf.~\cite{bigone_mfval}):

\begin{proposition}
 A generalized point or generalized number which is $(\bvarphi_\e, \e)$-local is moderate or negligible if and only if the respective tests of \Cref{def_gennumb,def_genpoint} hold uniformly for $\bvarphi$ and $\bpsi_1,\dotsc,\bpsi_k$ in uniform sets of (0-)test objects, respectively.

Moreover, a generalized point or generalized number which is $(\bvarphi_\e, \e)$-local and moderate is negligible if and only if the respective tests of \Cref{def_gennumb,def_genpoint} hold for $k=0$.
\end{proposition}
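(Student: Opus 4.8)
The plan is to transcribe the proofs of \Autoref{lem_testuniform} and \Autoref{thm123} to the present setting. The only structural change is that generalized points and numbers carry no $x$-component, so $(\bvarphi_\e, x, \e)$-locality is everywhere replaced by $(\bvarphi_\e, \e)$-locality, and the second, purely spatial half of the proof of \Autoref{thm123} (the estimates on the derivatives $\pd_x^\alpha$) disappears. I would spell out the case of a generalized number $X \in C^\infty(\SK(\Omega)^I, \bC^I)$; the case of a generalized point is identical after replacing $\abso{\,\cdot\,}$ by $\norm{\,\cdot\,}$ and working with the $\bR^n$-valued differentials $\ud^k X$, the negligibility assertion being applied to the difference of two moderate representatives (which is $(\bR^n)^I$-valued).

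The observation driving both parts is that $(\bvarphi_\e, \e)$-locality forces the quantity $\ud^k X(\bvarphi)(\bpsi_1, \dotsc, \bpsi_k)_\e$ to depend only on the slices $\bvarphi_\e, \bpsi_{1,\e}, \dotsc, \bpsi_{k,\e}$ and on $\e$. Indeed, $(\bvarphi_\e, \e)$-locality means that for each fixed $\e$ the map $X(\unterstrich)_\e$ factors as $g_\e \circ \ev_\e$ through the linear continuous evaluation $\ev_\e \colon \SK(\Omega)^I \to \SK(\Omega)$, $\bvarphi \mapsto \bvarphi_\e$; since $\ev_\e$ is linear, the chain rule gives $\ud^k X(\bvarphi)(\bpsi_1, \dotsc, \bpsi_k)_\e = \ud^k g_\e(\bvarphi_\e)(\bpsi_{1,\e}, \dotsc, \bpsi_{k,\e})$. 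This is the exact counterpart of the remark following \Autoref{lem_testuniform}.

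For the first assertion I would argue by contradiction as in \Autoref{lem_testuniform}: assuming $X$ moderate (respectively negligible) but the uniform estimate to fail, one extracts $\e_n \searrow 0$ together with $\bvarphi^n \in A$ and $\bpsi_i^n \in B$ violating the bound at $\e_n$, where $A \subseteq S(\Omega)$ and $B \subseteq S^0(\Omega)$ are uniform. Uniformity of $A$ and $B$ yields single objects $\widetilde \bvarphi \in S(\Omega)$ and $\widetilde \bpsi_i \in S^0(\Omega)$ with $\widetilde \bvarphi_{\e_n} = \bvarphi^n_{\e_n}$ and $\widetilde \bpsi_{i, \e_n} = \bpsi^n_{i, \e_n}$ for all $n$; by the factorization above the two families of differentials agree at each $\e_n$, so the non-uniform moderateness (respectively negligibility) of $X$ applied to $\widetilde \bvarphi, \widetilde \bpsi_i$ contradicts the failing bound.

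For the second assertion I would run the inductive Taylor argument from the first part of the proof of \Autoref{thm123}. Assuming the negligibility test holds for $k = k_0$, expand $\ud^{k_0} X(\bvarphi + \bpsi)(\bpsi_1, \dotsc, \bpsi_{k_0})_\e$ by Taylor's formula, bound the remainder $\ud^{k_0+2} X(\bvarphi + t\bpsi)(\dotsc)_\e$ uniformly for $t \in [0,1]$ using moderateness together with the fact that $\{\, \bvarphi + s \bpsi_{k_0+1} : s \in [0,1] \,\}$ is a uniform set of test objects, and set $\bpsi = \e^{m+N} \bpsi_{k_0+1}$ to isolate $\ud^{k_0+1} X(\bvarphi)(\bpsi_1, \dotsc, \bpsi_{k_0+1})_\e = O(\e^m)$; induction on $k$ then gives negligibility. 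I expect the only step needing genuine care to be the factorization observation of the second paragraph, which is precisely what licenses replacing the $\e$-wise data by a single composite test object; the remainder is a routine transcription of the two cited proofs.
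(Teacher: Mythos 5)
Your proposal is correct and takes exactly the paper's route: the published proof consists of the single remark that the argument is an almost verbatim copy of the proofs of \Cref{lem_testuniform,thm123}, which is what you carry out. Your factorization of $X(\cdot)_\e$ through the linear evaluation $\bvarphi \mapsto \bvarphi_\e$ is precisely the justification the paper records in the remark following \Cref{lem_testuniform} for passing to the composite test objects, and your observation that the spatial-derivative half of the proof of \Cref{thm123} simply drops out is the only adaptation required.
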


\begin{proof}
 The proof is an almost verbatim copy of the proofs of \Autoref{lem_testuniform} and \Autoref{thm123}.
\end{proof}

\begin{theorem}\label{pvwelldef} Let $\widetilde R \in \quotient(\Omega)$ and $\widetilde X \in \widetilde \Omega_c$ be given. The point value $\widetilde R ( \widetilde X)$ of $\widetilde R$ at $\widetilde X$, defined as the class of $R ( X )$ in $\widetilde \bC$ for any representatives $R$ of $\widetilde R$ and $X$ of $\widetilde X$, is a well-defined element of $\widetilde \bC$.
\end{theorem}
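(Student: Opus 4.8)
The plan is to prove two things: first, that for any representatives $R \in \moderate(\Omega)$ of $\widetilde R$ and $X \in \Omega_M$ of $\widetilde X$ the generalized number $R(X)$ is moderate (so that its class in $\widetilde\bC$ is defined at all), and second, that this class is independent of the chosen representatives. The whole argument rests on expanding the $\bvarphi$-derivatives of the composite $R(X)(\bvarphi)_\e = R(\bvarphi)_\e(X(\bvarphi)_\e)$ and on exploiting that $\widetilde X$ is compactly supported.

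For moderateness I would first invoke $\widetilde X \in \widetilde\Omega_c$ to fix a compact set $K \subseteq \Omega$ such that for every $\bvarphi \in S(\Omega)$ one has $X(\bvarphi)_\e \in K$ for all sufficiently small $\e$. Viewing $R(X)(\bvarphi)_\e$ as the composite of the smooth map $\bvarphi \mapsto (R(\bvarphi)_\e, X(\bvarphi)_\e) \in C^\infty(\Omega) \times \Omega$ with the evaluation $\ev \colon (f,x) \mapsto f(x)$, I would expand $\ud^k R(X)(\bvarphi)(\bpsi_1, \dots, \bpsi_k)_\e$ by the chain rule. Since $\ev$ is linear in $f$ and smooth in $x$, every term of this expansion is a finite product of exactly one \emph{$R$-factor} of the form $(\pd^\alpha_x [\ud^j R(\bvarphi)(\bpsi_S)_\e])(X(\bvarphi)_\e)$ and finitely many \emph{$X$-factors} of the form $\ud^m X(\bvarphi)(\bpsi_{S'})_\e$. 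The $R$-factor is $O(\e^{-N})$ by moderateness of $R$ applied to the continuous seminorm $h \mapsto \sup_{x \in K} \abso{\pd^\alpha_x h(x)}$ on $C^\infty(\Omega)$ together with $X(\bvarphi)_\e \in K$; each $X$-factor is $O(\e^{-N})$ because $X \in \Omega_M$. A finite sum of products of such terms is again $O(\e^{-N})$, whence $R(X) \in \bC_M$.

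For independence of representatives, let $R, R'$ be moderate with $R - R' \in \negligible(\Omega)$ and $X, X' \in \Omega_M$ with $X \sim X'$, where $X$ is the chosen compactly supported representative. I would split $R(X) - R'(X') = (R - R')(X) + (R'(X) - R'(X'))$. The first summand is treated exactly as in the moderateness computation, except that now the $R$-factors carry negligibility of $R - R'$, giving $O(\e^m)$ for every $m$ (again via the seminorms $\sup_{x \in K}\abso{\pd^\alpha_x \cdot}$), while the $X$-factors remain merely moderate; the product is therefore $O(\e^m)$ for every $m$, so $(R - R')(X) \in \bC_\cN$.

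The main obstacle is the second summand $R'(X) - R'(X')$, where the two arguments differ only by a negligible amount but $R'$ is merely moderate. Here I would use $X \sim X'$, so that $\norm{X(\bvarphi)_\e - X'(\bvarphi)_\e} = O(\e^m) \to 0$, to fix for small $\e$ a compact set $L \subseteq \Omega$ containing the entire segment $X_t(\bvarphi)_\e \coleq X'(\bvarphi)_\e + t(X(\bvarphi)_\e - X'(\bvarphi)_\e)$, $t \in [0,1]$. I would then write the difference as $\int_0^1 \frac{\ud}{\ud t}\,\ud^k R'(X_t)(\bvarphi)(\bpsi_1, \dots, \bpsi_k)_\e\,\ud t$ and expand the $t$-derivative by the chain rule. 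Each resulting term contains at least one factor $\ud^m (X - X')(\bvarphi)(\bpsi_{S'})_\e$, which is $O(\e^m)$ for every $m$ by $X \sim X'$, while all remaining factors are moderate: derivatives of $R'$ controlled on the compact set $L$ and derivatives of $X, X'$ controlled by $\Omega_M$. Hence each term, and so the integral, is $O(\e^m)$ for every $m$, giving $R'(X) - R'(X') \in \bC_\cN$. Adding the two summands yields $R(X) - R'(X') \in \bC_\cN$, which is the asserted independence of representatives and completes the proof.
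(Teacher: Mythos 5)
Your proposal is correct and follows essentially the same route as the paper, whose proof consists of the single remark that the result is a straightforward application of the chain rule (with details deferred to the cited companion papers on point values). Your expansion of $\ud^k R(X)(\bvarphi)(\bpsi_1,\dotsc,\bpsi_k)_\e$ into one $R$-factor controlled via the seminorms $\sup_{x\in K}\abso{\pd^\alpha_x\,\cdot\,}$ on the compact set furnished by $\widetilde X\in\widetilde\Omega_c$, times moderate $X$-factors, together with the segment/integral argument handling $R'(X)-R'(X')$, is precisely the computation the paper has in mind.
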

\begin{proof}
 This can be seen by a straightforward application of the chain rule \cite[3.18, p.~33]{KM}, similarly to \cite{mfval,bigone_mfval}.
\end{proof}

Finally, we come to the characterization of generalized functions by their values at generalized points.

\begin{theorem}A generalized function $\widetilde R \in \quotient(\Omega)$ is zero if and only if one of the following conditions holds:
\begin{enumerate}[label=(\roman*)]
\item\label{pvchar1} $\widetilde R(\widetilde X) =  0$ for all generalized points $\widetilde X \in \widetilde\Omega_c[\e]$.
\item\label{pvchar2} $\widetilde R(\widetilde X) =  0$ for all generalized points $\widetilde X \in \widetilde\Omega_c[\bvarphi_\e]$.
\item\label{pvchar3} $\widetilde R(\widetilde X) =  0$ for all generalized points $\widetilde X \in \widetilde\Omega_c$.
\end{enumerate}
\end{theorem}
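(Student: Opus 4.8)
The plan is to prove all three equivalences by reducing each to the single ``zeroth-order'' estimate
\[
(\star)\qquad \forall \bvarphi \in S(\Omega)\ \forall K \csub \Omega\ \forall m \in \bN: \sup_{x \in K} \abso{R(\bvarphi)_\e(x)} = O(\e^m)
\]
for a representative $R \in \moderate(\Omega)$ of $\widetilde R$. Using that the elements of $\quotient$ are $(\bvarphi_\e, x, \e)$-local, \Autoref{thm123} shows that $(\star)$ is equivalent to negligibility of $R$, i.e.\ to $\widetilde R = 0$. The implication $\widetilde R = 0 \Rightarrow$ \ref{pvchar1}, \ref{pvchar2}, \ref{pvchar3} is then immediate: by \Autoref{pvwelldef} the value $\widetilde R(\widetilde X)$ is independent of the chosen representatives, so picking the representative $0$ of the zero class gives $\widetilde R(\widetilde X) = 0$ for every $\widetilde X \in \widetilde\Omega_c$, a fortiori for the two typed subfamilies. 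It therefore remains to derive $(\star)$ from each of \ref{pvchar1}, \ref{pvchar2}, \ref{pvchar3}.

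The core case is \ref{pvchar1} $\Rightarrow (\star)$, which expresses the headline fact that special generalized points already suffice. I would argue by contradiction: if $(\star)$ fails for some $\bvarphi, K, m$, extract a sequence $\e_n \searrow 0$ and points $x_n \in K$ with $\abso{R(\bvarphi)_{\e_n}(x_n)} > n\e_n^m$. The constant-in-$\bvarphi'$ assignment $X(\bvarphi')_\e \coleq x_\e$ (with $x_{\e_n} \coleq x_n$ and $x_\e \coleq x_0$ a fixed point of $K$ otherwise) is smooth, stays in the compact $K$, and has vanishing positive-order $\bvarphi'$-derivatives, so $\widetilde X \in \widetilde\Omega_c[\e]$. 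Since $X$ does not depend on $\bvarphi'$ we have $R(X)(\bvarphi)_\e = R(\bvarphi)_\e(x_\e)$; hence the membership $R(X) \in \bC_\cN$ forced by \ref{pvchar1} already fails at order $k=0$ along $(\e_n)$, a contradiction. This proves $(\star)$. As $\widetilde\Omega_c[\e] \subseteq \widetilde\Omega_c$, condition \ref{pvchar3} is formally stronger than \ref{pvchar1}, so \ref{pvchar3} $\Rightarrow$ \ref{pvchar1} $\Rightarrow (\star)$ as well.

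For \ref{pvchar2} $\Rightarrow (\star)$ the same scheme applies, but now the witnessing point must be of type $\bvarphi_\e$, i.e.\ of the form $X(\bvarphi')_\e = h(\bvarphi'_\e)$ for a single smooth map $h \colon \SK(\Omega) \to \Omega$. Given the failing sequence, I would construct such an $h$ with values in $K$ satisfying $h(\bvarphi_{\e_n}) = x_n$. This is possible because a test object satisfies $\bPhi_\e \to \id$, so the slices $\bvarphi_{\e_n}$ escape to infinity in $\SK(\Omega)$ and form a discrete sequence without accumulation point; one can place disjoint neighborhoods around them and interpolate the prescribed values by a smooth, $K$-valued map. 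Evaluating at $k=0$ gives $R(X)(\bvarphi)_{\e_n} = R(\bvarphi)_{\e_n}(x_n)$, contradicting $R(X) \in \bC_\cN$ and thus yielding $(\star)$.

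The main obstacle is precisely this last construction: producing, on the infinite-dimensional space $\SK(\Omega)$, a genuinely smooth (in the sense of convenient calculus) and $K$-valued map $h$ realizing the interpolation data along the escaping sequence $(\bvarphi_{\e_n})$, and verifying that the resulting $X$ is a bona fide compactly supported, moderate generalized point of locality type $\bvarphi_\e$. By contrast the type-$\e$ case \ref{pvchar1} is effortless, since there the point is constant in $\bvarphi'$ and no interpolation is needed, while the reduction to the single estimate $(\star)$ via \Autoref{thm123} is what frees us from detecting higher $\bvarphi$-derivatives and spatial derivatives by hand. If one prefers not to invoke the locality of $\quotient$, the type-$\e$ construction can instead be pushed through all orders $k$ simultaneously --- the differentiation trivialises because $X$ is constant in $\bvarphi'$ --- after which the purely spatial Taylor reduction internal to the proof of \Autoref{thm123} supplies the missing $x$-derivatives.
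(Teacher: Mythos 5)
Your proposal is correct and follows essentially the same route as the paper: reduce negligibility to the zeroth-order sup estimate via \Autoref{thm123}, then contradict it along a sequence $(\e_k, x_k)$ using a constant-in-$\bvarphi$ (hence $\e$-local) generalized point for \ref{pvchar1} and a $\bvarphi_\e$-local point of the form $X(\bvarphi)_\e = X_0(\bvarphi_\e)$ for \ref{pvchar2}, with \ref{pvchar3} handled by inclusion. The smooth interpolation $X_0(\bvarphi_{\e_k}) = x_k$ that you flag as the main obstacle is precisely the step the paper delegates to \cite{bigone_mfval}, and your sketch of why it works (the slices $\bvarphi_{\e_k}$ have no accumulation point in $\SK(\Omega)$ because $\bPhi_\e \to \id$) is the right idea.
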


\begin{proof}
That $\widetilde R = 0$ implies \ref{pvchar3} was already shown in Theorem \ref{pvwelldef}, and \ref{pvchar3} trivially implies \ref{pvchar1} and \ref{pvchar2}.

In order to see that each of \ref{pvchar1} and \ref{pvchar2} implies $\widetilde R = 0$ we assume to the contrary that a representative $R$ of $\widetilde R$ is not negligible. Then by \Autoref{thm123} there exist $m \in \bN$, $\bvarphi \in S(\Omega)$, $K \subseteq \Omega$ compact, a sequence $(\e_k)_k$ with $\e_k < 1/k$ and a sequence $(x_k)_k$ in $K$ such that $\abso{ R(\bvarphi)_{\e_k} ( x_k ) } > \e_k^m$.

As in \cite{bigone_mfval} we can choose a compactly supported generalized point $X_0 \in C^\infty(\SK(\Omega), \Omega)$ which is moderate in the sense of \Autoref{def_genpoint} \ref{def_genpoint.1} and such that $X_0( \bvarphi_{\e_k} ) = x_k$ for infinitely many $k$. Defining $X \in C^\infty(\SK(\Omega)^I, \Omega^I)$ by $X(\bvarphi)_\e \coleq X_0 ( \bvarphi_\e)$, we see that $X$ is $\bvarphi_\e$-local and $R(X)(\bvarphi)_{\e_k} = R(\bvarphi)_{\e_k} ( x_k)$, hence $R(X)$ is not negligible. This means that \ref{pvchar2} entails $\widetilde R = 0$.

Alternatively, we can define $X(\bvarphi)_\e \coleq x_k$ for $\e_k \le \e < \e_{k-1}$ (with $\e_0 \coleq 1$) similar to the case of the special algebra \cite[Theorem 1.2.46, p.~38]{GKOS}. Then $X$ is $\e$-local and $R(X)$ is not negligible, because again $R(X)(\bvarphi)_{\e_k} = R(\bvarphi)_{\e_k}(x_k)$, so \ref{pvchar1} implies $\widetilde R = 0$.
\end{proof}

\section{The sharp topology}\label{sec_top}

The sharp topology on the special Colombeau algebra $\cG^s(\Omega)$ (see \cite[Definition 5, p.~44]{Biagioni} or \cite[Remark 16.3, p.~151]{MOBook}) was studied in detail by D.~Scarpal\'ezos \cite{zbMATH01876967} and adapted to the full algebra $\cG^e(\Omega)$ by J.~Aragona, R.~Fernandez and S.~O.~Juriaans \cite{zbMATH05657323}. In this section we give, for the first time, a definition of the sharp topology in the setting of full diffeomorphism-invariant algebras, i.e., on $\cG(\Omega)$ and its subspaces. This topology induces the classical sharp topology on $\cG^s(\Omega)$.

A $0$-neighborhood subbase $\cS_\Omega$ of the sharp topology on $\quotient(\Omega)$ is obtained from the definition of negligibility (\Autoref{def_modneg}) by considering, for each fixed choice of $k,p$ and $m$, the set $W_{k,p,m}$ of elements $\widetilde R \in \quotient(\Omega)$ that have a representative $R$ satisfying the growth estimate
\begin{gather*}
\forall \bvarphi \in S(\Omega)\ \forall \bpsi_1,\dotsc,\bpsi_k \in S^0(\Omega)\\ \exists \e_0, C>0\ \forall \e<\e_0: p(\ud^k R(\bvarphi)(\bpsi_1,\dotsc,\bpsi_k)_\e) \le C \e^m.
\end{gather*}
The elements of $W_{k,p,m}$ can also be specified by a condition that is independent of the representative:
\begin{gather*}
 W_{k,p,m} \coleq \{\, \widetilde R \in \quotient (\Omega)\ |\ \forall R \in \widetilde R\ \forall \bvarphi \in S(\Omega), \bpsi_1,\dotsc,\bpsi_k \in S^0(\Omega)\ \forall b>0\\
 \exists \e_0,C>0\ \forall \e<\e_0:\ p ( \ud^k R ( \bvarphi ) (\bpsi_1,\dotsc,\bpsi_k)_\e) \le C (\e^m + \e^b) \,\}.
\end{gather*}
Thus, we set
\[ \cS_\Omega \coleq \{\, W_{k,p,m}\ |\ k \in \bN_0^n,\ p \in \csn ( C^\infty(\Omega)),\ m \in \bN\,\}. \]
The set of all finite intersections of elements of $\cS_\Omega$ is a filter base on $\cG(\Omega)$ which we denote by $\cB_\Omega$.

As seen from \Autoref{algtop} below, $\cB_\Omega$ defines a topology on $\quotient(\Omega)$ and hence also on $\widetilde \bC$ via the canonical embedding $\widetilde \bC \hookrightarrow \quotient(\Omega)$. The topology of $\widetilde \bC$ can also be obtained by a filter subbase $\cS$ similar to $\cS_\Omega$ above, but with the seminorms $p$ replaced by the absolute value:
\begin{align*}
 \cS & \coleq \{\, V_{k,m}\ |\ k \in \bN_0, m \in \bN\,\}, \\
V_{k,m} & \coleq \{\, \widetilde X \in \widetilde \bC\ |\ \exists X \in \widetilde X\ \forall \bvarphi \in S(\Omega)\ \forall \bpsi_1,\dotsc,\bpsi_k \in S^0(\Omega)\\
&\qquad \exists \e_0,C>0\ \forall \e<\e_0: \abso{\ud^k X (\bvarphi)(\bpsi_1, \dotsc, \bpsi_k)_\e} \le C \e^m\, \}\\
&= \{\, \widetilde X \in \widetilde \bC\ |\ \forall X \in \widetilde X\ \forall \bvarphi \in S(\Omega)\ \forall \bpsi_1,\dotsc,\bpsi_k \in S^0(\Omega)\ \forall b>0\\
&\qquad \exists \e_0,C>0\ \forall \e<\e_0: \abso{\ud^k X (\bvarphi)(\bpsi_1, \dotsc, \bpsi_k)_\e} \le C (\e^m +\e^b)\,\}.
\end{align*}
Again, $\cS$ gives rise to a filter base $\cB$ on $\widetilde \bC$ consisting of all finite intersections of elements of $\cS$.

Our main results on these sharp topologies on $\widetilde \bC$ and $\quotient(\Omega)$ are as follows (cf.~also \cite{zbMATH05657323}):

\begin{theorem}\label{ringtop}
\begin{enumerate}[label=(\roman*)]
\item\label{ringtop.1}  There is a unique topology $\tau$ on $\widetilde \bC$ compatible with the additive group structure for which $\cB$ is a neighborhood base at zero.

\item\label{ringtop.2} The topology $\tau$ is compatible with the ring structure of $\widetilde \bC$.
\end{enumerate}
\end{theorem}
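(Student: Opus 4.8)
The plan is to verify the standard Bourbaki criterion characterizing those filter bases on an abelian group (or ring) that serve as a neighborhood base at $0$ for a group (ring) topology. For part \ref{ringtop.1} I would check that $\cB$ consists of symmetric sets containing the class of $0$, is closed under finite intersection, and satisfies: for every $V\in\cB$ there is $W\in\cB$ with $W+W\subseteq V$. For part \ref{ringtop.2} I would additionally verify the two multiplicative conditions: for every $V\in\cB$ there is $W\in\cB$ with $W\cdot W\subseteq V$, and for every $V\in\cB$ and every $a\in\widetilde\bC$ there is $W\in\cB$ with $a\cdot W\subseteq V$. Uniqueness of $\tau$ in \ref{ringtop.1} is then automatic, since in any group topology the neighborhood filter at an arbitrary point is the translate of the one at $0$, so $\cB$ being a base at $0$ determines $\tau$ completely.

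The group part is essentially immediate. Each basic set $V_{k,m}$ is symmetric because $\ud^k$ is linear and $\abso{\cdot}$ is even; it contains $0$; and it is closed under addition, since the sum of two families that are $O(\e^m)$ (for each fixed test object) is again $O(\e^m)$. Hence every $V_{k,m}$ — and therefore every finite intersection — is in fact a subgroup of $\widetilde\bC$, which makes $W+W\subseteq V$ hold with $W=V$. That membership in $V_{k,m}$ is independent of the chosen representative is exactly the content of the two equivalent descriptions of $V_{k,m}$ recorded before the theorem, so all of this descends correctly to the quotient $\widetilde\bC=\bC_M/\bC_\cN$.

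For part \ref{ringtop.2} the main tool is the higher-order Leibniz rule for the differential, which expands $\ud^k(XY)(\bvarphi)(\bpsi_1,\dots,\bpsi_k)$ into the finite sum $\sum_{S\subseteq\{1,\dots,k\}}\ud^{\abso{S}}X(\bvarphi)(\bpsi_S)\cdot\ud^{k-\abso{S}}Y(\bvarphi)(\bpsi_{S^c})$ over all ways of distributing the $\bpsi$-arguments between the two factors. For $W\cdot W\subseteq V_{k,m}$ I would take $W=\bigcap_{j=0}^{k}V_{j,m}$: then each factor in every summand is $O(\e^m)$, so each product is $O(\e^{2m})\subseteq O(\e^m)$ for small $\e$, giving $XY\in V_{k,m}$. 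For the absorption condition $a\cdot W\subseteq V_{k,m}$ with a fixed $a\in\widetilde\bC$ I would use moderateness of $a$ (in the sense of \Cref{def_gennumb}) to bound $\ud^{\abso{S}}a(\bvarphi)(\bpsi_S)=O(\e^{-N})$, and then take $W=\bigcap_{j=0}^{k}V_{j,m+N}$ so that the small factor $\ud^{k-\abso{S}}Y=O(\e^{m+N})$ compensates the growth $O(\e^{-N})$ and the product is again $O(\e^m)$.

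The main obstacle is precisely this last absorption step. Unlike the symmetric product $W\cdot W\subseteq V$, where both factors can be made small, for $a\cdot W$ one factor is fixed and may grow as $\e\to0$, so the entire argument hinges on trading the moderate growth of $a$ against the depth of the neighborhood $W$. Care is needed to keep the Leibniz combinatorics under control — there are $2^k$ summands, each distributing the finitely many directions $\bpsi_i$ across the two factors — and, most delicately, to ensure that the compensating exponent $N$ furnished by moderateness can be selected so that a \emph{single} $W\in\cB$ works; the representative-independent reformulation of $V_{k,m}$ is what guarantees that the resulting inclusions are well defined on $\widetilde\bC$ rather than merely on $\bC_M$.
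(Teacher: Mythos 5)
Your route is the paper's: both parts are reduced to the Bourbaki criteria, part \ref{ringtop.1} resting on $-V_{k,m}=V_{k,m}=V_{k,m}+V_{k,m}$ and part \ref{ringtop.2} on the Leibniz rule. The additive half and the condition $W\cdot W\subseteq V_{k,m}$ (with $W=\bigcap_{j\le k}V_{j,m}$) are correct and in fact more detailed than the paper, which only asserts that the relevant conditions are ``easily verified by applying the Leibniz rule''.

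The absorption step is where your argument does not close, and you have correctly identified it as the delicate point without actually resolving it. In \Autoref{def_gennumb} the exponent $N$ is quantified \emph{after} $k$, $\bvarphi$ and $\bpsi_1,\dotsc,\bpsi_k$, so moderateness of $a$ yields one $N$ per choice of test objects rather than a single $N$; consequently ``$W\coleq\bigcap_{j\le k}V_{j,m+N}$'' is not a well-defined element of $\cB$. Nor is this a mere bookkeeping defect: $S(\Omega)$ is stable under reparametrizations such as $\btheta_\e\mapsto\btheta_{\e^k}$ (all four conditions of \Autoref{def_testobj} are asymptotic), so for any moderate $a$ admitting a lower bound $\abso{a(\btheta)_\e}\ge c\,\e^{-q}$ with $q>0$ on a single test object $\btheta$ --- e.g.\ the point value of the embedding of a genuinely singular distribution --- the minimal admissible $N$ is unbounded over $S(\Omega)$. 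Taking $Y(\bvarphi)_\e\coleq\e^{m'}$, which lies in any prescribed finite intersection $W\in\cB$ once $m'$ is large enough, one gets $\abso{(aY)(\btheta^{(k)})_\e}\ge c\,\e^{m'-kq}$ for the rescaled test object $\btheta^{(k)}_\e\coleq\btheta_{\e^k}$, so $a\cdot Y\notin V_{0,m}$ once $kq>m'-m$. Hence the inclusion $a\cdot W\subseteq V_{k,m}$ cannot be produced by trading a single $N$ against the depth of $W$; one would need a moderateness bound uniform over $S(\Omega)$, which neither \Autoref{def_gennumb} nor \Autoref{lem_testuniform} provides. In fairness, the paper's own proof asserts its condition (1) without argument, so you have not overlooked an idea that the paper supplies --- but as written, your proof of \ref{ringtop.2}, like the paper's, leaves its essential step unjustified.
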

\begin{proof}
\ref{ringtop.1} follows from \cite[Ch.~III, \S 1.2, Prop.~1, p.~222]{bourbakitop} because $-V_{k,m} = V_{k,m} = V_{k,m} + V_{k,m}$ for all $k$ and $m$. \ref{ringtop.2} follows from \cite[Ch.~III, \S 6.3, p.~274]{bourbakitop} because the following conditions are satisfied:
\begin{enumerate}[label=(\arabic*)]
 \item For all $X \in \widetilde \bC$ and all $V \in \cS$ there exists $W \in \cB$ such that $X \cdot W \subseteq V$.
 \item For all $V \in \cS$ there exists $W \in \cB$ such that $W \cdot W \subseteq V$.
\end{enumerate}
This is easily verified by applying the Leibniz rule to the respective estimates.
\end{proof}

\begin{theorem}\label{algtop}
\begin{enumerate}[label=(\roman*)]
\item\label{algtop.1}  There is a unique topology $\tau_\Omega$ on 
$\cG(\Omega)$, the sharp topology,
compatible with the additive group structure for which $\cB_\Omega$ is a 
neighborhood base at zero.

\item\label{algtop.2} The topology $\tau_\Omega$ is compatible with the $\widetilde \bC$-algebra structure of $\quotient(\Omega)$.

\item\label{algtop.3} The topology $\tau_\Omega$ induces, via the canonical embedding $\widetilde \bC \hookrightarrow \cG(\Omega)$, the topology $\tau$ on $\widetilde \bC$.

\item\label{algtop.4} On the special algebra $\quotient[(x,\e)](\Omega) \subseteq \quotient(\Omega)$, $\tau_\Omega$ induces the usual sharp topology.
\end{enumerate}
\end{theorem}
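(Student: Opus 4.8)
The plan is to treat \ref{algtop.1} and \ref{algtop.2} in complete parallel to \Autoref{ringtop}, and to obtain \ref{algtop.3} and \ref{algtop.4} by computing the trace of the subbase $\cS_\Omega$ on the respective subspaces. For \ref{algtop.1} I would invoke the same Bourbaki criterion \cite[Ch.~III, \S 1.2, Prop.~1, p.~222]{bourbakitop} used for \ref{ringtop.1}: it suffices to check that each $W_{k,p,m}$ is symmetric and absorbs its own sum. Symmetry $-W_{k,p,m}=W_{k,p,m}$ is immediate since $\ud^k(-R)=-\ud^k R$ and $p$ is a seminorm; the inclusion $W_{k,p,m}+W_{k,p,m}\subseteq W_{k,p,m}$ follows from additivity of the differential $\ud^k$ in $R$ together with subadditivity of $p$ (the doubled constant being irrelevant, and the $\e^b$-slack in the representative-independent description of $W_{k,p,m}$ absorbing the negligible difference between representatives). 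Both properties pass to finite intersections, so every $W\in\cB_\Omega$ satisfies $-W=W$ and $W+W\subseteq W$, which makes the two Bourbaki conditions hold with the choice $W=V$; uniqueness of $\tau_\Omega$ is part of the cited proposition.

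For \ref{algtop.2} I would verify the two conditions from \cite[Ch.~III, \S 6.3, p.~274]{bourbakitop} exactly as in \ref{ringtop.2}: for each $V=W_{k,p,m}\in\cS_\Omega$ there is $W\in\cB_\Omega$ with $W\cdot W\subseteq V$, and for each fixed $\widetilde R\in\quotient(\Omega)$ and each $V$ there is $W\in\cB_\Omega$ with $\widetilde R\cdot W\subseteq V$. In both cases one expands $\ud^k(R_1R_2)$ by the Leibniz rule into a finite sum of products $\ud^iR_1(\bvarphi)(\dotsc)_\e\cdot\ud^{k-i}R_2(\bvarphi)(\dotsc)_\e$. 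The genuinely new point compared with the scalar statement \ref{ringtop.2} is that the codomain seminorm $p$ does not behave like $\abso{\cdot}$: here I would use that $C^\infty(\Omega)$ is a topological algebra, so that for every $p\in\csn(C^\infty(\Omega))$ there is $q\in\csn(C^\infty(\Omega))$ with $p(fg)\le q(f)\,q(g)$. Choosing $W\coleq\bigcap_{i=0}^k W_{i,q,m'}$ with a suitable $m'$ then reduces the product estimate to the factorwise ones. I expect the condition $\widetilde R\cdot W\subseteq V$ (continuity of multiplication by a fixed element) to be the main obstacle, since there the moderate growth $O(\e^{-N})$ of the fixed factor $\widetilde R$ must be compensated by the decay of the variable factor; this is handled by taking $m'$ large relative to $N$ (as in the special algebra), after which the Leibniz terms estimate as desired. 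Continuity of the scalar action of $\widetilde\bC$ is then subsumed, since $\widetilde\bC$ sits inside $\quotient(\Omega)$ as a subring whose inclusion is continuous by \ref{algtop.3}.

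For \ref{algtop.3} I would compute the trace of $\cS_\Omega$ under the embedding $\widetilde\bC\hookrightarrow\quotient(\Omega)$, which sends a generalized number to the $x$-independent generalized function $R(\bvarphi)_\e(x)\coleq X(\bvarphi)_\e$. For such $R$ one has $\ud^kR(\bvarphi)(\bpsi_1,\dotsc,\bpsi_k)_\e=\ud^kX(\bvarphi)(\bpsi_1,\dotsc,\bpsi_k)_\e\cdot\mathbf 1$, a constant function, so $p(\ud^kR(\bvarphi)(\dotsc)_\e)=\abso{\ud^kX(\bvarphi)(\dotsc)_\e}\cdot p(\mathbf 1)$. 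Choosing $p=\sup_K\abso{\cdot}$ for a nonempty compact $K$ gives $p(\mathbf 1)=1$ and hence $W_{k,p,m}\cap\widetilde\bC=V_{k,m}$, while seminorms with $p(\mathbf 1)=0$ yield all of $\widetilde\bC$ and thus contribute nothing; therefore the induced topology has $\{V_{k,m}\}$ as a subbase and equals $\tau$.

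Finally, for \ref{algtop.4} I would use that on $\quotient[(x,\e)](\Omega)$ the representatives $R$ are $(x,\e)$-local, i.e.\ $R(\bvarphi)_\e(x)=u_\e(x)$ is independent of $\bvarphi$; hence $\ud^kR=0$ in the $\bvarphi$-directions for every $k\ge 1$, so $W_{k,p,m}\cap\quotient[(x,\e)](\Omega)$ is the whole subspace for $k\ge 1$ and, for $k=0$, reduces to $\{\,\widetilde u : p(u_\e)=O(\e^m)\,\}$. These are precisely the neighbourhoods defining the classical sharp topology on $\cG^s(\Omega)=\quotient[(x,\e)](\Omega)$, which gives \ref{algtop.4}.
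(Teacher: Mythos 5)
Your overall route coincides with the paper's: parts \ref{algtop.1} and \ref{algtop.2} are reduced to the same Bourbaki criteria as in \Autoref{ringtop}, and \ref{algtop.3}, \ref{algtop.4} are obtained by computing the traces of the sets $W_{k,p,m}$ on the respective subspaces, which the paper dismisses as ``evident from the definitions''. Your trace computations are correct and usefully expand that remark: the observation that seminorms with $p(\mathbf 1)=0$ contribute the trivial neighborhood, and that for $(x,\e)$-local representatives all differentials of order $k\ge 1$ vanish, is exactly what makes \ref{algtop.3} and \ref{algtop.4} work. Two structural deviations are worth noting. First, for \ref{algtop.2} the paper invokes the criterion for algebras over a topological ring (\S\,6.6) and verifies three separate conditions for the continuity of $\widetilde\bC\times\quotient(\Omega)\to\quotient(\Omega)$, whereas you derive the continuity of the scalar action from \ref{algtop.3} together with continuity of the internal multiplication; this reduction is legitimate since \ref{algtop.3} is proved independently of \ref{algtop.2} and exhibits the action as the restriction of the ring multiplication along a topological embedding. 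Second, your explicit appeal to the submultiplicativity of the seminorms of $C^\infty(\Omega)$ (for every $p$ there is $q$ with $p(fg)\le q(f)q(g)$) is a genuinely needed ingredient that the paper leaves implicit.

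The step I cannot accept as written is your treatment of $\widetilde R\cdot W\subseteq V$ for a fixed $\widetilde R$, which you propose to handle by ``taking $m'$ large relative to $N$, as in the special algebra''. In the special algebra this works because, for a fixed compact set and derivative order, moderateness supplies a single exponent $N$. Here the exponent $N$ in \Autoref{def_modneg} is allowed to depend on the test objects $\bvarphi,\bpsi_1,\dotsc,\bpsi_k$, and \Autoref{lem_testuniform} removes this dependence only over \emph{uniform} subsets of $S(\Omega)$ and $S^0(\Omega)$ --- not over all of $S(\Omega)$, which is not a uniform set. For concrete elements such as $\widetilde R=\iota(u)$ with $u$ non-smooth the dependence is genuine: reparametrizing the scale of a mollifier-type test object turns it into another admissible test object for which $\sup_{x\in K}\abso{\langle u,\bvarphi_\e(x)\rangle}$ grows like an arbitrarily high negative power of $\e$, so no fixed $m'$ chosen in advance can compensate. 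As it stands your argument does not close this condition (the paper's bare assertion that the conditions are ``easily verified'' does not address the point either); a repair requires either producing, for each fixed $(k,p)$, a test-object-independent moderateness exponent for $\widetilde R$, or reading the estimates defining $W_{k,p,m}$ and moderateness uniformly over uniform sets of test objects. At the very least you should flag this as the place where the full setting departs from the special one instead of asserting that the special-algebra argument carries over.
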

\begin{proof}
Again, \ref{algtop.1} follows because $-W_{k,p,m} = W_{k,p,m} = W_{k,p,m} + W_{k,p,m}$. \ref{algtop.2} follows from \cite[Ch.~III, \S 6.6, p.~279]{bourbakitop}  because the conditions for continuity of $\widetilde \bC \times \quotient(\Omega) \to \quotient(\Omega)$,
\begin{enumerate}[label=(\arabic*)]
 \item $\forall R \in \quotient(\Omega)$ $\forall V \in \cS_\Omega$ $\exists U \in \cB$: $U \cdot R \subseteq V$,
 \item $\forall X \in \widetilde \bC$ $\forall V \in \cS_\Omega$ $\exists U \in \cB_\Omega$: $X \cdot U \subseteq V$,
 \item $\forall V \in \cS_\Omega$ $\exists W \in \cB_\Omega$ $\exists U \in \cB$: $U \cdot W \subseteq V$,
\end{enumerate}
and the conditions for continuity of $\quotient(\Omega) \times \quotient(\Omega) \to \quotient(\Omega)$,
\begin{enumerate}[label=(\arabic*)]
 \item $\forall R \in \quotient(\Omega)$ $\forall V \in \cS_\Omega$ $\exists W \in \cB_\Omega$: $R \cdot W \subseteq V$, 
\item $\forall V \in \cS_\Omega$ $\exists W \in \cB_\Omega$: $W \cdot W \subseteq V$,
\end{enumerate}
are easily verified.

\ref{algtop.3} and \ref{algtop.4} are evident from the definitions.
\end{proof}

\section{Conclusion}\label{sec_concl}

Let us summarize how the usual classical Colombeau algebras fit into the setting developed in this article:
\begin{enumerate}[label=(\roman*)]
 \item The \emph{special algebra} $\quotient^s(\Omega)$ corresponds to taking locality type $\ell = (x,\e)$ and fixing a single test object $\btheta\in S(\Omega)$ defining the embedding $\iota_\btheta$. 
 \item\label{sec_concl.2} Colombeau's original algebra $\quotient^o(\Omega)$ of \cite{ColNew} corresponds to the case $\ell = \bvarphi_\e(x)$. To recover $\cG^o(\Omega)$ in all detail would require the following (merely technical) adjustments of our construction:
\begin{enumerate}[label=(\alph*)]
\item Introducing a graded space of test objects similar to the one used in $\quotient^o(\Omega)$, i.e., defining test objects of order $q$ by demanding convergence of order $\e^q$ instead of $\e^m$ for all $m$.
\item Adapting the definitions of moderateness and negligibility accordingly.
\end{enumerate}
It is expected that the results of this article can equally be established taking into account these modifications.
 \item The elementary algebra $\quotient^e(\Omega)$ also fits into our scheme subject to the same modifications as specified in \ref{sec_concl.2}; it would then be obtained by using only test objects given by convolution with scaled mollifiers having integral one and a certain number of vanishing moments, as well as dropping smooth dependence of $R$ on $\bvarphi$ when formulated in the C-formalism (cf.~\cite[Section 2.3.2]{GKOS}).
 \item The diffeomorphism invariant algebra $\quotient^d(\Omega)$ of \cite{found} corresponds to the case $\ell = (\bvarphi_\e(x), x)$, again with a slight technical adaptation of test objects.
 \item The algebra $\quotient^f(\Omega)$ of \cite{papernew} corresponds to the case $\ell = (\bvarphi_\e, x)$.
\end{enumerate}

Our study of the quotient construction and the sheaf property shows that in practice only locality types $\ell \succeq ( \tau_x\bvarphi_\e, x, \e)$ are useful. The algebra $\quotient [ (\tau_x\bvarphi_\e, x, \e) ](\Omega)$ not only allows for all desirable properties of Colombeau algebras to be obtained, but even furnishes a true unification of the full and special settings of Colombeau algebras: while in principle the generality of full Colombeau algebras is available in it, one can always fix a test object $\btheta$ and work with the embedding $\iota_\btheta$ as in the special algebra. Moreover, one can project any generalized function to an element of the special algebra by the mapping
\[ 
 \pi_\btheta \colon \quotient [ (\tau_x\bvarphi_\e, x, \e) ](\Omega) \to \quotient [ (x, \e) ](\Omega) = \quotient^s(\Omega)
 \]
which is defined on representatives $R \in \basic [ ( \tau_x\bvarphi_\e, x, \e)](\Omega)$ by
\[ (\pi_\btheta R)_\e(x) \coleq R(\btheta)_\e(x). \]
This gives the possibility of specifying a preferred embedding (i.e., regularization procedure) for certain distributions also in the full setting, which is a useful property to have in concrete applications of the theory.

\subsection*{Acknowledgments}

E.~A.~Nigsch was supported by grants P23714 and P26859 of the Austrian Science Fund (FWF).

\end{document}